\numberwithin{equation}{section}
\title{Convergence rate analysis and improved iterations for numerical radius computation}
\author{Tim Mitchell\thanks{
Max Planck Institute for Dynamics of Complex Technical Systems, Sandtorstr. 1, 39106 Magdeburg, Germany \href{mailto:mitchell@mpi-magdeburg.mpg.de}{\texttt{mitchell@mpi-magdeburg.mpg.de}}.}
}
\date{{\small January 31st, 2020\\Revised: December 15th, 2020, October 27, 2021, June 6, 2022}}
\theoremstyle{plain}
\newtheorem{theorem}{Theorem}[section]
\newtheorem{lemma}[theorem]{Lemma}
\newtheorem{corollary}[theorem]{Corollary}
\newtheorem{assumption}[theorem]{Assumption}
\newtheorem{remark}[theorem]{Remark}
\newtheorem{keyremark}[theorem]{Key Remark}
\newtheorem{definition}[theorem]{Definition}
\newtheorem{example}[theorem]{Example}
\crefname{assumption}{Assumption}{Assumptions}
\def\matlab{MATLAB}
\def\R{\mathbb{R}}
\def\C{\mathbb{C}}
\def\bigO{\mathcal{O}}
\def\imagunit{\mathbf{i}}
\DeclareMathOperator{\sgn}{sgn}
\DeclareMathOperator{\Arg}{Arg}
\DeclareMathOperator*{\argmax}{arg\,max}
\DeclareMathOperator{\arcsec}{arcsec}
\def\e{\mathrm{e}}
\def\eit{\eix{\theta}}
\def\emit{\emix{\theta}}
\newcommand{\eix}[1]{\e^{\imagunit #1}}
\newcommand{\emix}[1]{\e^{-\imagunit #1}}
\def\eps{\varepsilon}
\def\Hcal{\mathcal{H}}
\def\Hinf{\Hcal_\infty}
\renewcommand{\Re}{\mathrm{Re}\,}
\renewcommand{\Im}{\mathrm{Im}\,}
\def\ie{i.e., }
\def\eg{e.g., }
\def\beq{\begin{equation}}
\def\eeq{\end{equation}}
\def\beqs{\begin{equation*}}
\def\eeqs{\end{equation*}}
\def\bseq{\begin{subequations}}
\def\eseq{\end{subequations}}
\def\bd{\partial}
\def\fovsym{W}
\def\fovbdsym{\bd \fovsym}
\newcommand{\fovX}[1]{\fovsym(#1)}
\newcommand{\bfovX}[1]{\fovbdsym(#1)}
\def\fovA{\fovX{A}}
\def\bfovA{\bfovX{A}}
\def\numa{\alpha_\fovsym(A)}
\def\numr{r(A)}
\newcommand{\numaX}[1]{\alpha_\fovsym(#1)}
\newcommand{\algnote}[1]{\footnotesize \sc{Note: \it#1 } }
\begin{document}
\maketitle

\begin{abstract}
The main 
two algorithms for computing the numerical radius are the level-set method of Mengi and Overton and
the cutting-plane method of Uhlig.
Via new analyses, we explain why
 the cutting-plane approach
is sometimes much faster or much slower than the level-set one  and then 
propose a new hybrid algorithm that remains efficient in all cases.
For matrices whose fields of values are a circular disk centered at the origin, 
we show that the 
cost 
of Uhlig's method  blows up with respect to the desired relative accuracy.
More generally, we also analyze the local behavior 
of Uhlig's cutting procedure at outermost points in the field of values, 
showing that it often has a fast Q-linear rate of convergence and is Q-superlinear at corners.
Finally, we identify and address inefficiencies in both the level-set and cutting-plane approaches
and propose refined versions of these~techniques.
\end{abstract}

\medskip 
\noindent
\textbf{Key words:} 
field of values, numerical range and radius, transient behavior
\medskip

\noindent
\textbf{Notation:} $\| \cdot \|$ denotes the spectral norm, $\Lambda(\cdot)$ the spectrum (the set of eigenvalues) of a square matrix,
and $\lambda_\mathrm{max}(\cdot)$ and $\lambda_\mathrm{min}(\cdot)$, respectively, the largest and smallest eigenvalue of a Hermitian matrix.   $\e$ and $\imagunit$ respectively denote Euler's number and~$\sqrt{-1}$.

\section{Introduction}
Consider the discrete-time dynamical system
\beq
	\label{eq:ode_disc}
	x_{k+1} = Ax_k,
\eeq
where $A \in \C^{n \times n}$ and $x_k \in \C^n$.
The asymptotic behavior of \eqref{eq:ode_disc} is of course characterized by the moduli of $\Lambda(A)$.
Given the \emph{spectral radius} of~$A$,
\beq
	\rho(A) \coloneqq \max \{ |\lambda| : \lambda \in \Lambda(A)\},
\eeq
$\lim_{k \to \infty} \| x_k\| = 0$ for all $x_0$ if and only if $\rho(A) < 1$, with the asymptotic decay rate being faster
the closer $\rho(A)$ is to zero.  
However, knowing the transient behavior of~\eqref{eq:ode_disc} is often of interest.  
Clearly, the trajectory of~\eqref{eq:ode_disc} is tied to powers of~$A$, 
since $x_k  = A^k x_0$ and so \mbox{$\|x_k\| \leq \|A^k\| \|x_0\|$}.  Indeed,
a central theme of Trefethen and Embree's treatise on pseudospectra \cite{TreE05}
is how large $\sup_{k \geq 0} \|A^k\|$ can be.

One perspective is given by the \emph{field of values} (\emph{numerical range}) of $A$,
\beq
	\label{eq:fov}
	\fovA \coloneqq \{ x^* A x : x \in \C^n, \|x\| = 1\}.
\eeq
Consider the maximum of the moduli of points in $\fovA$, \ie the \emph{numerical radius}
\beq
	\label{eq:numr}
	\numr \coloneqq \max \{ | z | : z \in \fovA\}.
\eeq
It is known that $\tfrac{1}{2}\|A\| \leq r(A) \leq \|A\|$; see \cite[p.~44]{HorJ91}. 
Combining the lower bound with the power inequality $r(A^k) \leq (r(A))^k$ \cite{Ber65,Pea66} yields
\beq
	\label{eq:numr_ineq}
	\| A^k \| \leq 2 (\numr)^k.
\eeq
As $2(\numr)^k \leq \|A\|^k$ if and only if $\numr \leq \sqrt[k]{0.5} \|A\|$, and $\numr \leq \|A\|$ always holds, 
it follows that $2(\numr)^k$ is often a tighter upper bound for $\|A^k\|$ than $\|A\|^k$ is,
and so the numerical radius can be useful in estimating the transient behavior 
of \eqref{eq:ode_disc}.\footnote{Per \cite{TreE05},
the \emph{pseudospectral radius} and the \emph{Kreiss constant} \cite{Kre62}
also give information on the trajectory of~\eqref{eq:ode_disc}.
For computing these quantities, see \cite{MenO05,BenM19} and \cite{Mit20a,Mit21}.}

The concept of the numerical radius dates to at least 1961; see \cite[p.~1005]{LewO20}.
In~1978, Johnson noted that $\numr$ could be computed via his 
cutting-plane technique to approximate~$\fovA$,
but that a modified algorithm would likely be more efficient~\cite[Remark~3]{Joh78}.
Such geometric approaches estimate~$\fovA$ (or $\numr$) 
by computing a number of supporting hyperplanes
to sufficiently approximate the boundary of $\fovA$ (or regions of it);
supporting hyperplanes are computed using the much earlier 
Bendixson-Hirsch theorem~\cite{Ben02a} and fundamental results of Kippenhahn~\cite{Kip51}.
Results related to computing $\numr$ also appeared in the 1990s.  
Mathias showed that~$\numr$ can be obtained by solving a semidefinite program~\cite{Mat93},
but doing so is expensive.
Much faster algorithms were then proposed by He and Watson~\cite{Wat96,HeW97}, 
but these methods may not converge to $\numr$.

The 2000s saw further interest in computing $\numr$ with the following key results.
In 2005, Mengi and Overton gave a fast globally convergent method 
for~$\numr$~\cite{MenO05} by combining an idea of 
He and Watson \cite{HeW97}
with the level-set approach of Boyd, Balakrishnan, Bruinsma, and Steinbuch (BBBS) \cite{BoyB90,BruS90}
for computing the $\Hinf$ norm.
Although Mengi and Overton observed that their method converged quadratically,
this was only later proved in 2012 by G\"urb\"uzbalaban in his PhD thesis~\cite[section~3.4]{Gur12}.
Meanwhile, in 2009, Uhlig proposed a fast geometric approach 
to computing $\numr$~\cite{Uhl09} using cutting planes and a new greedy strategy.\footnote{In this 
same paper~\cite[section~3]{Uhl09}, Uhlig also 
discussed how Chebfun~\cite{DriHT14} can be used to reliably compute~$\numr$ with just a few lines of \matlab,
but that it is generally orders of magnitude slower than either his method 
or the one of Mengi and Overton; see also~\cite{GreO18}.}
A major benefit of cutting-plane methods is that they only require 
computing~$\lambda_\mathrm{max}$ of $n \times n$ Hermitian matrices.  
If $A$ is sparse, this can be done efficiently and reliably using, 
say, \texttt{eigs} in \matlab\@.
Hence, Uhlig's method can be used on large-scale problems while still being globally convergent. 
In contrast, at every iteration, the level-set approach requires 
solving a generalized eigenvalue problem of order~$2n$, which by standard convention on work 
complexity, is an atomic operation with~$\bigO(n^3)$~work.
While Uhlig noted that convergence of his method can sometimes be quite slow~\cite[p.~344]{Uhl09}, 
his experiments in the same paper showed several problems where 
his cutting-plane method was decisively faster Mengi and Overton's level-set method.

A key motivation for our work here is the class of problems where the numerical radius of parametric
matrices is optimized, such as feedback control; see \cite{LewO20} for more details and other applications.
Since minimizing the numerical radius is a nonsmooth optimization problem, 
optimization solvers generally will converge slowly and require many function evaluations.
During the course of optimization, 
the numerical radius will be computed for many different parameter choices, and the
shape and location of the field of values will be constantly changing.
Per~\cite{LewO20}, the solutions to such numerical radius optimization problems 
are often so-called \emph{disk matrices};
a disk matrix is one whose field of values is a disk centered at the origin.
As we elucidate in this paper, whether a cutting-plane method
is very fast or very slow is determined by the geometry of the field of values,
and for disk matrices, the overall cost of a cutting-plane method blows up with respect to the desired relative accuracy.
For optimizing the numerical radius, we thus would like to 
have a numerical radius method that remains efficient in all 
cases, which is what we propose here. 
Moreover, we want such consistent efficiency without sacrificing the precision afforded by the hardware.
As we show later, one can avoid the high costs of cutting-plane methods if one settles for only 
a few digits of accuracy, but doing so can adversely affect the quality and reliability of optimization.
Inaccuracy in the estimates of the numerical radius values can cause
optimization solvers to stagnate (e.g., line searches may break down),
while computed gradients, which are critical, of $\max$ functions 
can be totally inaccurate even when the function values are computed to, say, seven digits;
for more details, see \cite{BenM18a} and \cite{BenMO18}.

The paper is organized as follows. 
In~\cref{sec:background}, we give necessary preliminaries on the field of values, the numerical 
radius, and earlier $\numr$ algorithms. 
We then identify and address some inefficiencies in the level-set method
of Mengi and Overton and propose a faster variant in~\cref{sec:alg1}.
We analyze Uhlig's method in \cref{sec:rate}, deriving
(a) its overall cost when the field of values is a disk centered at the origin, and (b) 
a Q-linear local rate of convergence result for its cutting procedure.  
These analyses precisely show how, depending on the problem, 
Uhlig's method can be either extremely fast or extremely slow.
In \cref{sec:alg2}, we identify an inefficiency in Uhlig's cutting procedure 
and address it via a more efficient cutting scheme whose exact convergence rate
we also derive.
Putting all of this together, we present our new hybrid algorithm in \cref{sec:hybrid}.
We validate our results experimentally in \cref{sec:experiments} and give concluding remarks in~\cref{sec:conclusion}.

\section{Preliminaries}
\label{sec:background}
We will need the following well-known facts~\cite{Kip51,HorJ91}:

\begin{remark}
\label{rem:fov}
Given $A \in \C^{n \times n}$, 
\begin{enumerate}[leftmargin=30pt,label=(A\arabic*),font=\normalfont]
\item $\fovA \subset \C$ is a compact, convex set,
\item if $A$ is real, then $\fovA$ has real axis symmetry,
\item if $A$ is normal, then $\fovA$ is the convex hull of $\Lambda(A)$,
\item $\fovA = [\lambda_\mathrm{min}(A), \lambda_\mathrm{max}(A)]$ if and only if $A$ is Hermitian,
\item the boundary of $\fovA$, $\bfovA$, is a piecewise smooth algebraic curve,
\item if $v \in \bfovA$ is a point where $\bfovA$ is not differentiable, \ie a corner, then $v \in \Lambda(A)$.
	Corners always correspond to two line segments in $\bfovA$ meeting  at some angle less than $\pi$ radians.
\end{enumerate}
\end{remark}

\begin{definition}
Given a nonempty closed set $\mathcal{D} \subset \C$, 
a point $\tilde z \in \mathcal{D}$ is (globally) \emph{outermost} 
if $|\tilde z| = \max\{|z| : z \in \mathcal{D}\}$ and 
\emph{locally outermost} if $\tilde z$ is an outermost point of $\mathcal{D} \cap \mathcal{N}$, for some neighborhood $\mathcal{N}$ of $\tilde z$.
\end{definition}

For continuous-time systems $\dot x = Ax$, we have the \emph{numerical abscissa}
\beq
	\label{eq:numa}
	\numa \coloneqq \max \{ \Re z : z \in \fovA\},
\eeq
\ie the maximal real part of all points in $\fovA$.
Unlike the numerical radius, computing the numerical abscissa is straightforward, as~\cite[p.~34]{HorJ91}
\beq
	\label{eq:tan_line}
	\numa = \lambda_\mathrm{max} \left(\tfrac{1}{2} \left(A + A^*\right)\right).
\eeq
For $\theta \geq 0$, $\fovX{\eit A}$ is $\fovA$ rotated  counter-clockwise about the origin.  Consider
\beq		
	\label{eq:hmat}
	H(\theta) \coloneqq \tfrac{1}{2} \left(\eit A + \emit A^*\right),
\eeq
so $\numaX{\eit A} = \lambda_\mathrm{max}(H(\theta))$ and $\numa = \lambda_\mathrm{max}(H(0))$.
Let $\lambda_\theta$ and~$x_\theta$ denote, respectively, $\lambda_\mathrm{max}(H(\theta))$
and an associated normalized eigenvector.  Furthermore, let $L_\theta$ denote the line $\{ \emit (\lambda_\theta + \imagunit t): t \in \R\}$
and $P_\theta$ the half plane $\emit \{ z : \Re z \leq \lambda_\theta\}$.
Then $L_\theta$ is a \emph{supporting hyperplane} for~$\fovA$
and \cite[p.~597]{Joh78}
\begin{enumerate}[leftmargin=30pt,label=(B\arabic*),font=\normalfont]
\item $\fovA \subseteq P_\theta$ for all $\theta \in [0,2\pi)$,
\item $\fovA = \cap_{\theta \in [0,2\pi)} P_\theta$,
\item $z_\theta = x_\theta^*Ax_\theta \in L_\theta$ is a \emph{boundary point} of $\fovA$.
\end{enumerate}
As $H(\theta + \pi)  = -H(\theta)$,
$P_{\theta+\pi}$ can also be obtained via $\lambda_\mathrm{min}(H(\theta))$ and an associated eigenvector.
The Bendixson-Hirsch theorem is a special case of these properties, defining the bounding box
 of $\fovA$ for $\theta = 0$ and~\mbox{$\theta = \tfrac{\pi}{2}$}.
 
When $\lambda_\mathrm{max}(H(\theta))$ is simple, 
the following result of Fiedler~\cite[Theorem~3.3]{Fie81} 
gives a formula for computing the \emph{radius of curvature $\tilde r$ of $\bfovA$ at~$z_\theta$},
defined as the radius of the  \emph{osculating circle of $\bfovA$ at~$z_\theta$}, \ie
the circle with the same tangent and curvature as $\bfovA$ at $z_\theta$.
At corners of $\bfovA$, we say that~$\tilde r = 0$,
while at other boundary points where the radius of curvature is well defined,\footnote{An example
where $\tilde r$ is not well defined is given by $A = \begin{bsmallmatrix} J & 0 \\ 0 & J+I \end{bsmallmatrix}$
with $J = \begin{bsmallmatrix} 0 & 1 \\ 0 & 0 \end{bsmallmatrix}$.
At~\mbox{$b=0.5 \in \bfovA$}, 
two of the algebraic curves, a line segment
 and a semi-circle, comprising $\bfovA$
 meet, and $\bfovA$ is only once differentiable at this non-corner boundary point.
Here, the radius of curvature of $\bfovA$ jumps from $0.5$ (for the semi-circular piece) 
 to $\infty$ (for the line segment).} 
 $\tilde r > 0$ and becomes infinite at points inside line segments in $\bfovA$.
Although the formula is given for $\theta=0$ and~$z_\theta = 0$,
by simple rotation and shifting, it can be applied generally.
See \cref{fig:demo_fov} for a depiction of the osculating circle of $\bfovA$ at
an outermost point in~$\fovA$.  

\begin{theorem}[Fiedler]
\label{thm:curvature}
Let $H_1 = \tfrac{1}{2}(A + A^*)$, \mbox{$H_2 = \tfrac{1}{2\imagunit}(A - A^*)$},
let $H_1^+$ be the Moore-Penrose pseudoinverse of $H_1$,
 and let $x_\theta$ be a normalized eigenvector 
 corresponding to $\lambda_\mathrm{max}(H(\theta))$.
Noting that $A = H_1 + \imagunit H_2$ and $H(0) = H_1$,
suppose that \mbox{$\lambda_\mathrm{max}(H_1) = 0$} and is simple, 
and that the associated boundary point \mbox{$z_\theta = x_\theta^* A x_\theta = 0$}, where $\theta=0$.
Then the radius of curvature of $\bfovA$ at $z_\theta$ is
\beq
	\label{eq:curvature}
	\tilde r = -2 (H_2 x_\theta)^* H_1^+ (H_2 x_\theta).
\eeq
\end{theorem}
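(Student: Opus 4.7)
My plan is to parameterize the boundary curve as $z(\theta) = x_\theta^* A x_\theta$ analytically near $\theta = 0$ using the smoothness of simple eigenpairs, then compute $z'(0)$ and the real part of $z''(0)$ and read off the radius of curvature from the classical plane-curve formula $R = |\gamma'|^3 / |\Im(\overline{\gamma'} \gamma'')|$.

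First, since $\lambda_\mathrm{max}(H_1) = 0$ is simple, analytic perturbation theory makes $\lambda_\theta$ and, under a phase gauge enforcing $x_0^* x_0' = 0$, also $x_\theta$, smooth in $\theta$ near $0$. Expanding the paper's definition gives $H(\theta) = \cos\theta\, H_1 - \sin\theta\, H_2$, so $H(0) = H_1$ and $H'(0) = -H_2$. Differentiating $H(\theta) x_\theta = \lambda_\theta x_\theta$ at $\theta = 0$ yields $H_1 x_0' = \lambda_0' x_0 + H_2 x_0$, and taking the inner product with $x_0$ gives $\lambda_0' = -x_0^* H_2 x_0$. Because $z_0 = x_0^* A x_0 = \imagunit\, x_0^* H_2 x_0 = 0$, both $\lambda_0'$ and $x_0^* H_2 x_0$ vanish; hence $H_2 x_0$ is orthogonal to $x_0$, lies in the range of $H_1$, and the gauge condition pins down $x_0' = H_1^+ H_2 x_0$.

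Next, differentiating $z(\theta) = x_\theta^* A x_\theta$ and using $A x_0 = \imagunit H_2 x_0$ (since $H_1 x_0 = 0$), both contributions to $z'(0)$ equal $\imagunit (H_2 x_0)^* H_1^+ (H_2 x_0)$, so $z'(0) = 2\imagunit \alpha$ with $\alpha \coloneqq (H_2 x_0)^* H_1^+ (H_2 x_0)$. Since $\lambda_\mathrm{max}(H_1) = 0$ forces $H_1 \preceq 0$ and hence $H_1^+ \preceq 0$, we have $\alpha \leq 0$; setting $\tilde r \coloneqq -2\alpha \geq 0$ gives $z'(0) = -\imagunit \tilde r$, tangent to $L_0$ (the imaginary axis) as expected. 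For the real part of $z''(0)$, let $u(\theta) = x_\theta^* H_1 x_\theta$. The terms involving $x_0''$ in $u''(0)$ vanish because $H_1 x_0 = 0$, leaving $u''(0) = 2 (x_0')^* H_1 x_0' = 2 (H_2 x_0)^* H_1^+ H_1 H_1^+ (H_2 x_0) = 2\alpha = -\tilde r$ via the pseudoinverse identity $H_1^+ H_1 H_1^+ = H_1^+$.

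Finally, writing $z = u + \imagunit v$, the first-derivative values $u'(0) = 0$ (from $H_1 x_0 = 0$) and $v'(0) = -\tilde r$ combine with $u''(0) = -\tilde r$ to give $\Im(\overline{z'(0)} z''(0)) = u'(0) v''(0) - u''(0) v'(0) = -\tilde r^2$; together with $|z'(0)|^3 = \tilde r^3$, the curvature formula yields $|\kappa(0)| = 1/\tilde r$ (conveniently $v''(0)$ never enters because $u'(0) = 0$). Hence the radius of curvature of $\bfovA$ at $z_0 = 0$ is $\tilde r = -2(H_2 x_0)^* H_1^+ (H_2 x_0)$ as claimed. The main subtlety is the phase choice for $x_\theta$ and verifying that $H_2 x_0 \in \mathrm{range}(H_1)$ so that $x_0' = H_1^+ H_2 x_0$ is genuinely the derivative; both follow cleanly from the hypothesis $z_0 = 0$.
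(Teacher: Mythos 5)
The paper does not prove \cref{thm:curvature}; it simply cites Fiedler \cite{Fie81}, so there is no in-paper argument to compare against. Your derivation is correct and self-contained, and it follows the natural perturbation-theoretic route: $H(\theta)=\cos\theta\,H_1-\sin\theta\,H_2$ is right, the hypothesis $z_0=0$ correctly forces $x_0^*H_2x_0=0$ and hence $H_2x_0\in\mathrm{range}(H_1)=x_0^\perp$, so $x_0'=H_1^+H_2x_0$ under your gauge, and the computations $z'(0)=2\imagunit\alpha$, $u''(0)=2\alpha$ with $\alpha=(H_2x_0)^*H_1^+(H_2x_0)\le 0$ (since $H_1\preceq 0$) all check out, giving radius of curvature $-2\alpha$ via the standard plane-curve formula. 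The only point worth flagging is that the curvature formula requires a regular parameterization, i.e., $z'(0)\neq 0$; this fails exactly when $H_2x_0=0$, in which case $Ax_0=0$ and $z_0$ is a corner-type point, consistent with the paper's convention $\tilde r=0$ there, so the implicit assumption $\alpha<0$ is harmless but deserves a sentence.
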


\begin{figure}
\centering
\subfloat[The field of values and local curvature.]{
\resizebox*{6.0cm}{!}{\includegraphics[trim=1.3cm 0cm 1.3cm 0cm,clip]{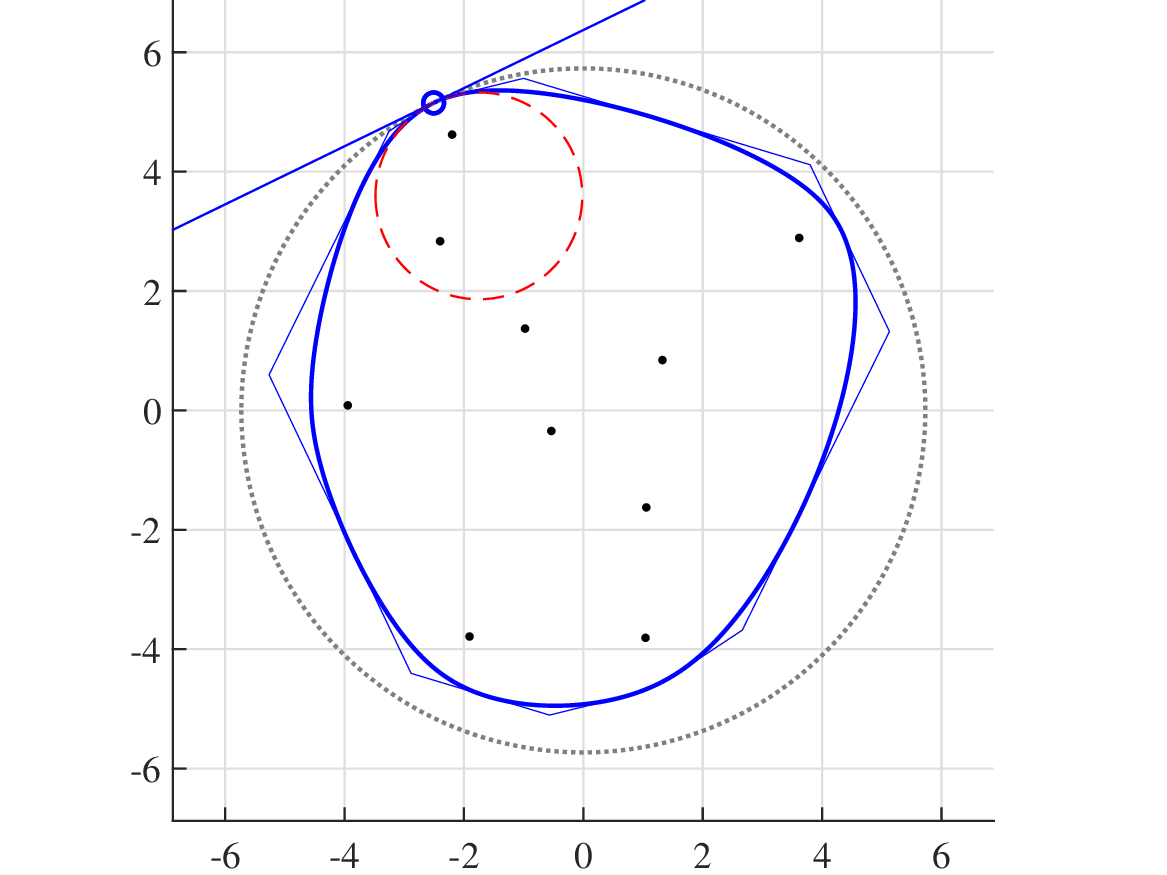}}
\label{fig:demo_fov}
}
\subfloat[Level-set iterates for $h(\theta)$.]{
\resizebox*{6.0cm}{!}{\includegraphics[trim=1.3cm 0cm 1.3cm 0cm,clip]{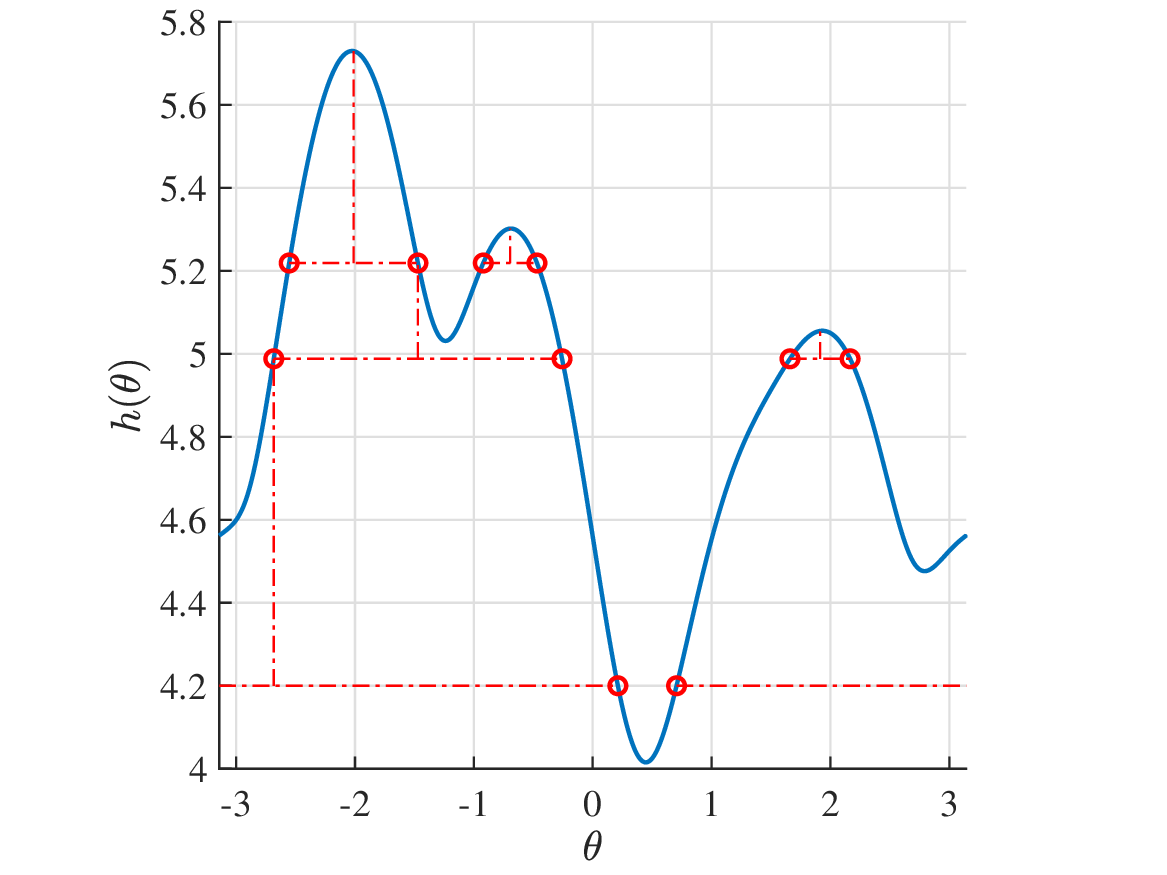}}
\label{fig:demo_levelset}
}
\caption{For a random matrix $A \in \C^{10 \times 10}$,
the left and right panes respectively show $\bfovA$ (blue curve) and $h(\theta)$ (blue plot). 
On the left, the following are also shown: $\Lambda(A)$ (black dots), 
a polygonal approximation $\mathcal{G}_j$ to $\fovA$ (blue polygon),
 the outermost point in $\fovA$ (small blue circle) with the corresponding 
 supporting hyperplane (blue line) and osculating circle (dashed red circle),
and the circle of radius~$\numr$ centered at the origin (black dotted circle).
On the right, three iterations of the level-set method are also shown (small circles and dash-dotted lines in red).
}
 \label{fig:demo}
\end{figure}

Via \eqref{eq:tan_line} and \eqref{eq:hmat}, 
the numerical radius can be written as
\beq
	\label{eq:nr_opt}
	\numr 
	= \max_{\theta \in [0,2\pi)} h(\theta) 
	\qquad \text{where} 
	\qquad
	h(\theta) \coloneqq \lambda_\mathrm{max} \left(H(\theta)\right),
\eeq
\ie a one-variable maximization problem.
Via $H(\theta + \pi)  = -H(\theta)$, it also follows that 
\beq	
	\label{eq:nr_opt_abs}
	\numr = \max_{\theta \in [0,\pi)} \rho( H(\theta)).
\eeq
 However, as \eqref{eq:nr_opt} and \eqref{eq:nr_opt_abs} may have multiple maxima, 
 it is not straightforward to find
a global maximizer of either, and crucially, assert that it is indeed a global maximizer in order to verify that $\numr$ has been computed.
Per (A3), we generally assume that $A$ is non-normal, as otherwise $\numr = \rho(A)$.

We now discuss earlier numerical radius algorithms in more detail.
In 1996, Watson proposed two $\numr$ methods~\cite{Wat96}:
one which converges to local maximizers of~\eqref{eq:nr_opt_abs}
and a second which lacks convergence guarantees but is cheaper 
(though they each do $\bigO(n^2)$ work per iteration).  
However, as both iterations are related to the power
method, they may exhibit very slow convergence, and the cheaper iteration may
not converge at all.  Shortly thereafter~\cite{HeW97}, He and Watson used 
the second iteration (because it was cheaper) in combination with a new certificate test inspired by Byers’ distance to 
instability algorithm~\cite{Bye88}.  This certificate either asserts that $\numr$ has been
computed to a desired accuracy or provides a way to restart Watson’s cheaper 
iteration with the hope of more accurately estimating~$\numr$.  
However, He and Watson’s method is still not guaranteed to converge, 
since Watson’s cheaper iteration may not converge.  
Inspired by the BBBS algorithm for computing the $\Hinf$~norm,
Mengi and Overton then proposed a globally convergent iteration for~$\numr$ in 2005
by using He and Watson's certificate test in a much more powerful way.
Given $\gamma \leq \numr$, the test actually allows one to obtain the $\gamma$-level set 
of $h(\theta)$, \ie $\{ \theta : h(\theta) = \gamma\}$.   
Assuming the level set is not empty, Mengi and Overton's method then evaluates~$h(\theta)$ 
at the midpoints of the intervals under~$h(\theta)$ determined by the $\gamma$-level~set points.
Estimate $\gamma$ is then updated (increased) to the highest of these corresponding function values.
This process is done in a loop, and as mentioned in the introduction, has local quadratic convergence.  
See \cref{fig:demo_levelset} for a depiction of this level-set iteration.

The certificate (or level-set) test is based on \cite[Theorem~3.1]{MenO05}, 
which is a slight restatement of \cite[Theorem~2]{HeW97} from He and Watson.
We omit the proof. 
\begin{theorem}
\label{thm:level}
Given $\gamma \in \R$, 
the pencil $R_\gamma - \lambda S$ has $\eit$ as an eigenvalue or is singular if and only if
$\gamma$ is an eigenvalue of $H(\theta)$ defined in \eqref{eq:hmat}, where
\beq
	\label{eq:RS}
	R_\gamma \coloneqq
	\begin{bmatrix}
		2\gamma I  	& -A^* \\
		I 			& 0  
	\end{bmatrix}
	\quad \text{and} \quad
	S \coloneqq
	\begin{bmatrix}
		A   		& 0 \\
		0 		& I  
	\end{bmatrix}.
\eeq
\end{theorem}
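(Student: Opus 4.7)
The plan is to reduce both the eigenvalue condition for $H(\theta)$ and the two pencil conditions on $R_\gamma - \lambda S$ to the vanishing of a single scalar polynomial at $\lambda = \eit$. Starting with the right-hand side: $\gamma \in \Lambda(H(\theta))$ iff $\det(H(\theta) - \gamma I) = 0$. Multiplying the matrix inside the determinant by the nonzero scalar $2\eit$ and setting $\lambda = \eit$ rewrites this as $\det(\lambda^2 A - 2\gamma \lambda I + A^*) = 0$. Denoting $Q(\lambda) \coloneqq \lambda^2 A - 2\gamma \lambda I + A^*$, the task reduces to showing that $\det Q(\eit) = 0$ is equivalent to the pencil $R_\gamma - \lambda S$ being either singular or having $\eit$ as an eigenvalue.

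Next I would establish the linearization identity $\det(R_\gamma - \lambda S) = \det Q(\lambda)$ as polynomials in $\lambda$. Writing
\[
R_\gamma - \lambda S = \begin{bmatrix} 2\gamma I - \lambda A & -A^* \\ I & -\lambda I \end{bmatrix}
\]
and applying the Schur-complement determinant formula with respect to the $(2,2)$ block $-\lambda I$ (valid for $\lambda \neq 0$) yields $(-\lambda)^n \det(2\gamma I - \lambda A - \lambda^{-1} A^*)$. Factoring $-\lambda^{-1}$ out of the inner matrix identifies this with $\det Q(\lambda)$ after the $(-1)^n$ and $\lambda^n$ factors cancel. Since both sides are polynomials in $\lambda$ that agree for all $\lambda \neq 0$, they agree as polynomials.

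With the identity in hand, the two pencil conditions translate cleanly. The pencil has $\eit$ as an eigenvalue iff $\det(R_\gamma - \eit S) = 0$, which by the linearization is $\det Q(\eit) = 0$; and it is singular iff $\det(R_\gamma - \lambda S) \equiv 0$, which in particular forces $\det Q(\eit) = 0$. Conversely, if $\det Q(\eit) = 0$, then $\det(R_\gamma - \eit S) = 0$, so either the pencil is regular with $\eit$ a finite eigenvalue, or it is singular. Combined with the first step, this closes both directions of the equivalence.

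The main obstacle is the singular-pencil case, which requires the linearization identity to hold as an identity of polynomials in $\lambda$ rather than pointwise at a generic $\lambda$. Since the Schur-complement derivation formally excludes $\lambda = 0$, extending the identity to all $\lambda$ relies on the polynomial identity principle. The remaining work is bookkeeping of nonzero scalar prefactors such as $(-1)^n$, $2^n$, and powers of $\eit$, none of which affect whether the corresponding determinants vanish.
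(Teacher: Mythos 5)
Your proof is correct. Note that the paper itself omits a proof of this theorem, deferring to He and Watson's Theorem~2 (of which it is a restatement); their original argument works at the eigenvector level, showing $H(\theta)x=\gamma x$ is equivalent to $(\lambda^2 A - 2\gamma\lambda I + A^*)x=0$ at $\lambda=\eit$ and then verifying the companion linearization directly, whereas you work at the determinant level. Your route is an equivalent and clean formulation: the Schur-complement computation gives $\det(R_\gamma-\lambda S)=\det(\lambda^2 A-2\gamma\lambda I+A^*)$ for $\lambda\neq 0$, the polynomial identity principle extends it to all $\lambda$, and the scalar factor $(2\eit)^n$ relating $\det Q(\eit)$ to $\det(H(\theta)-\gamma I)$ is nonzero, so all the equivalences—including the singular-pencil disjunct, which you correctly reduce to $\det(R_\gamma-\eit S)=0$—go through.
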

Per \cref{thm:level}, the $\gamma$-level set of $h(\theta) = \lambda_\mathrm{max}(H(\theta))$ 
is associated with the unimodular eigenvalues of $R_\gamma - \lambda S$, 
which can be obtained in $\bigO(n^3)$ work (with a significant constant factor).
Note that the converse may not hold, \ie
for a unimodular eigenvalue of $R_\gamma - \lambda S$,
$\gamma$ may correspond to an eigenvalue of $H(\theta)$ other than $\lambda_\mathrm{max}(H(\theta))$.
Given any $\theta \in [0,2\pi)$, also note that $R_\gamma - \lambda S$ is nonsingular for all $\gamma > h(\theta)$.
This is because if $\fovA$ and a disk centered at the origin enclosing $\fovA$ have more than $n$ shared boundary points,
then $\fovA$ is that disk; see \cite[Lemma~6]{TamY99}.

Uhlig's method computes $\numr$ via updating a bounded convex polygonal approximation 
to $\fovA$ and set of known points in $\bfovA$ respectively given by:
\beqs
	\mathcal{G}_j 
	\coloneqq
	\bigcap_{\theta \in \left\{\theta_1,\ldots,\theta_j\right\}} P_\theta
	\qquad \text{and} \qquad
	\mathcal{Z}_j \coloneqq
	\{ z_{\theta_1}, \ldots, z_{\theta_j} \},
\eeqs
where $\fovA \subseteq \mathcal{G}_j$ (see~\cref{fig:demo_fov} for a depiction), 
$0 \leq \theta_1 < \cdots < \theta_j < 2\pi$,
and 
$z_{\theta_\ell} = x_{\theta_\ell}^* A x_{\theta_\ell}$ 
is a boundary point of $\fovA$ on $L_{\theta_\ell}$
for $\ell = 1,\ldots,j$.
Note that the corners of $\mathcal{G}_j$ are given by 
$L_{\theta_{\ell}} \cap L_{\theta_{\ell+1}}$ for $\ell = 1,\ldots,j-1$ and $L_{\theta_1} \cap L_{\theta_j}$.
Given $\mathcal{G}_j$ and $\mathcal{Z}_j$, lower and upper bounds 
$l_j \leq \numr \leq u_j$
are immediate, where
\[
	l_j \coloneqq  \max \{ |b| : b \in \mathcal{Z}_j \}
	\quad \text{and} \quad
	u_j \coloneqq  \max \{ |c| : c \text{ a corner of } \mathcal{G}_j \},
\]
so we define the relative error estimate:
\beq
	\label{eq:nr_err}
	\eps_j \coloneqq 
	\frac{ u_j - l_j }{ l_j}.
\eeq
By repeatedly cutting outermost corners of $\mathcal{G}_j$, and in turn, 
adding computed boundary points of $\fovA$ to $\mathcal{Z}_j$, 
it follows that $\eps_j$ must fall 
below a desired relative tolerance for some~$k \geq j$; 
hence, $\numr$ can be computed to any desired accuracy.
Uhlig's method achieves this via a greedy strategy. 
On each iteration, his algorithm chops off an outermost corner $c_j$ from $\mathcal{G}_j$,
which is done via computing the 
supporting hyperplane~$L_{\theta_{j+1}}$ for~\mbox{$\theta_{j+1} = -\Arg(c_{j})$}
and the boundary point $z_{\theta_{j+1}} = x_{\theta_{j+1}}^* A x_{\theta_{j+1}}$.
Assuming that $c_j \not\in \fovA$, the cutting operation results in 
\mbox{$\mathcal{G}_{j+1} \coloneqq \mathcal{G}_j \cap P_{\theta_{j+1}}$}, 
a smaller polygonal region excluding the corner~$c_j$,
and 
\mbox{$\mathcal{Z}_{j+1} \coloneqq \mathcal{Z}_j \cup \{z_{\theta_{j+1}}\}$};
therefore, $\eps_{j+1} \leq \eps_j$.
However, if~$c_j$ happens to be a corner of $\bfovA$, then it cannot be cut from $\mathcal{G}_j$, and 
instead this operation asserts that $|c_j| = \numr$, 
and so $\numr$ has been computed. 
In \cref{sec:rate}, \Cref{fig:uhlig} depicts Uhlig's method when a corner is cut.

\begin{remark}
\label{rem:two_hp}
Recall that the parallel supporting hyperplane $L_{\theta_{j+1} + \pi}$ 
and the corresponding boundary point 
$z_{\theta_{j+1} + \pi}$
can be obtained via an eigenvector $\tilde x_{\theta_{j+1}}$ of $\lambda_\mathrm{min}(H(\theta_{j+1}))$.
If~$\tilde x_{\theta_{j+1}}$ is already available or relatively cheap to compute,
there is little reason \emph{not} to also update~$\mathcal{G}_j$ 
and~$\mathcal{Z}_j$ using this additional information.
\end{remark}

\section{Improvements to the level-set approach}
\label{sec:alg1}
We now propose two straightforward but important modifications  
to make the level-set approach faster and more reliable.
We need the following immediate corollary of \cref{thm:level},
which clarifies that \cref{thm:level} also allows all points in any $\gamma$-level set 
of $\rho(H(\theta))$ to be computed.
\begin{corollary}
\label{cor:remap}
Given $\gamma \geq 0$, if $\rho(H(\theta)) = \gamma$, then there exists 
$\lambda \in \C$ such that $|\lambda| = 1$, $\det(R_\gamma - \lambda S) = 0$,
and $\theta = f(\Arg(\lambda))$,
where $f : (-\pi,\pi] \mapsto [0, \pi)$ is 
\vspace{-0.25cm}
\beq
	\label{eq:remap}
	f(\theta) \coloneqq
	\begin{cases}
	\theta + \pi	& \text{if } \theta < 0 \\
	0 			& \text{if } \theta = 0 \text{ or } \theta = \pi \\
	\theta 		& \text{otherwise}.
	\end{cases}
\eeq
\end{corollary}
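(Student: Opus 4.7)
The plan is to derive the corollary from \cref{thm:level} by unpacking what $\rho(H(\theta)) = \gamma$ means for the Hermitian matrix $H(\theta)$ and then invoking the theorem at a suitably chosen angle. Since $H(\theta)$ is Hermitian, $\rho(H(\theta)) = \gamma$ is equivalent to asserting that either $\gamma$ or $-\gamma$ lies in $\Lambda(H(\theta))$. Using the identity $H(\theta + \pi) = -H(\theta)$ noted before \eqref{eq:nr_opt_abs}, the second alternative is equivalent to $\gamma \in \Lambda(H(\theta+\pi))$. In either case \cref{thm:level} immediately produces a unimodular root of $\det(R_\gamma - \lambda S)$, so the substance of the proof is simply checking that the corresponding $\lambda$ can be picked so that $f(\Arg(\lambda)) = \theta$.

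Concretely, with $\theta \in [0,\pi)$ (forced by the codomain of $f$), I would split into two cases. In Case~1, $\gamma \in \Lambda(H(\theta))$, and \cref{thm:level} applied at the same $\theta$ gives that $\lambda = \eit$ is a unimodular eigenvalue of $R_\gamma - \lambda S$ (or the pencil is singular, in which case any unimodular $\lambda$ satisfying $\theta = f(\Arg(\lambda))$ is allowed). Since $\Arg(\eit) = \theta \in [0,\pi)$, a line-by-line check against \eqref{eq:remap} yields $f(\Arg(\lambda)) = \theta$ (the $\theta=0$ branch handles the endpoint, the ``otherwise'' branch handles $\theta \in (0,\pi)$). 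In Case~2, $\gamma \in \Lambda(H(\theta+\pi))$, so \cref{thm:level} applied at the shifted angle gives $\lambda = \eix{\theta+\pi}$ as a unimodular root of the pencil. One then converts $\theta + \pi \in [\pi, 2\pi)$ to its principal value in $(-\pi, \pi]$ and verifies that $f$ undoes the shift.

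The only even mildly delicate step is the bookkeeping in Case~2. When $\theta = 0$ the principal argument of $\lambda$ is exactly $\pi$, which is why the middle branch of $f$ is needed; when $\theta \in (0,\pi)$ the principal argument is $\theta - \pi \in (-\pi, 0)$, and the $\theta<0$ branch of $f$ correctly restores $\theta$ by adding $\pi$. This is exactly what forces $f$ to be defined piecewise rather than as a single modular reduction. Beyond this casework the result is immediate from \cref{thm:level}, so it genuinely is a corollary with no mathematical content beyond translating between $\rho(H(\theta))$ and $\lambda_\mathrm{max}(H(\cdot))$.
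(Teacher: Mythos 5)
Your argument is correct and is precisely the derivation the paper has in mind when it calls this an ``immediate corollary'' of \cref{thm:level} (the paper omits the proof entirely): unpack $\rho(H(\theta))=\gamma$ into $\gamma\in\Lambda(H(\theta))$ or $\gamma\in\Lambda(H(\theta+\pi))$ via $H(\theta+\pi)=-H(\theta)$, apply the theorem at the appropriate angle, and check that $f$ maps the principal argument of the resulting unimodular $\lambda$ back to $\theta$. The case analysis, including the $\theta=0$ endpoint and the singular-pencil alternative, is handled correctly, so nothing is missing.
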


Thus, first we propose doing a BBBS-like iteration 
using $\rho(H(\theta))$ instead of $h(\theta)$,
which also has local quadratic convergence.
By an extension of the argument of Boyd and Balakrishnan~\cite{BoyB90}, near maximizers, 
$\rho(H(\theta))$ is unconditionally twice continuously differentiable with Lipschitz second derivative; 
see~\cite{MitO21}.
Using $\rho(H(\theta))$ is also typically faster in terms of constant factors.
This is because $\rho(H(\theta)) \geq h(\theta)$ always holds, $\rho(H(\theta)) \geq 0$ (unlike $h(\theta)$, which can be negative),
and the optimization domain is reduced from $[0,2\pi)$ to $[0,\pi)$.
Thus, every update to the current estimate~$\gamma$ computed via 
$\rho(H(\theta))$ must be at least as good as the one from using~$h(\theta)$ (and possibly much better), 
and there may also be fewer level-set intervals per iteration, 
which reduces the number of eigenproblems incurred involving $H(\theta)$.

Second, we also propose using local optimization on top of the BBBS-like step at every iteration, \ie 
the BBBS-like step is used to initialize optimization in order to find a maximizer of $\rho(H(\theta))$.
The first benefit  is speed, as optimization often results in much larger 
updates to estimate $\gamma$ and these updates are now locally optimal.
This greatly reduces the total number of expensive eigenvalue computations done with 
$R_\gamma - \lambda S$, often down to just one;
hence, the overall runtime can be substantially reduced since in comparison,
optimization is cheap (as we explain momentarily).
The second benefit is that using optimization also avoids some numerical difficulties 
when solely working with $R_\gamma - \lambda S$ to update $\gamma$.
In their 1997 paper, He and Watson showed  
that the condition number of a unimodular eigenvalue of $R_\gamma - \lambda S$ actually blows up
as $\theta$ approaches critical values of $h(\theta)$ or $\rho(H(\theta))$ \cite[Theorem~4]{HeW97},\footnote{The 
exact statement appears in the last lines of the corresponding proof on p.~335.}
as this corresponds to a pair of unimodular eigenvalues of $R_\gamma - \lambda S$ coalescing
into a double eigenvalue.
Since this must always occur as a level-set method converges, 
rounding errors may prevent all of the unimodular eigenvalues from being detected,
causing level-set points to go undetected, thus resulting in stagnation of
the algorithm before it finds~$\numr$ to the desired accuracy.
He and Watson wrote that their analytical result was ``hardly encouraging" \cite[p.~336]{HeW97},
though they did not observe this issue in their experiments.
However, an example of such a deleterious effect is shown in~\cite[Figure~2]{BenM19},
where analogous eigenvalue computations are shown to greatly reduce numerical accuracy
when computing the \emph{pseudospectral abscissa}~\cite{BurLO03}.

In contrast, optimizing $\rho(H(\theta))$ does not lead to numerical difficulties.
This objective function is both Lipschitz (as $H(\theta)$ is Hermitian \cite[Theorem~II.6.8]{Kat82})
and smooth at its maximizers (as discussed above).
Thus, local maximizers of $\rho(H(\theta))$
can be found using, say, Newton's method, with only a handful of iterations.
Interestingly, in their concluding remarks \cite[p.~341--2]{HeW97}, He and Watson 
seem to have been somewhat pessimistic about using Newton's method, writing that while it would have
faster local convergence than Watson's iteration, 
``the price to be paid is at least a considerable increase in computation,
and possibly the need of the calculation of higher derivatives, and for the incorporation
of a line search."  
As we now explain, using, say, secant or Newton's method, is 
actually an overall big win.  Also, note that with either secant or Newton, 
steps of length one are always eventually accepted; hence, the cost of line searches 
should not be a concern. 

\begin{algfloat}[t]
\begin{algorithm}[H]
\floatname{algorithm}{Algorithm}
\caption{An Improved Level-Set Algorithm}
\label{alg1}
\begin{algorithmic}[1]
	\REQUIRE{  
		$A \in \C^{n \times n}$ with $n \geq 2$, 
		initial guesses $\mathcal{M} = \{\theta_1,\ldots,\theta_q\}$, and \mbox{$\tau_\mathrm{tol} > 0$}.
	}
	\ENSURE{ 
		$\gamma$ such that $|\gamma -\numr| \leq \tau_\mathrm{tol} \cdot \numr$.
		\\ \quad
	}
	
	\STATE $\mathcal{M} \gets \mathcal{M} \cup \{ 0 \}$ 
	\WHILE { $\mathcal{M}$ is not empty } 
		\STATE $\theta_\mathrm{BBBS} \gets \argmax_{\theta \in \mathcal{M}} \rho(H(\theta))$ \COMMENT{In case of ties, just take any one}
		\STATE $\gamma \gets$ maximization of $\rho(H(\theta))$ via local optimization initialized at $\theta_\mathrm{BBBS}$
		\STATE $\gamma \gets \gamma (1 + \tau_\mathrm{tol})$
		\STATE $\Theta \gets
				\{ f(\Arg(\lambda)) : \det (R_\gamma - \lambda S) = 0, |\lambda | = 1 \}$
		\STATE $[\theta_1,\ldots,\theta_q] \gets 
			\Theta~\text{sorted in increasing order with any duplicates removed}$
		\STATE $\mathcal{M} \gets \{ \theta : \rho(H(\theta)) > \gamma ~\text{where}~ \theta = 0.5(\theta_\ell + \theta_{\ell+1}), \ell=1,\ldots,q-1\}$
	\ENDWHILE
\end{algorithmic}
\end{algorithm}
\vspace{-0.4cm}
\algnote{
For simplicity, we forgo giving pseudocode to exploit possible normality of~$A$ or symmetry of~$\fovA$,
and assume that eigenvalues and local maximizers are obtained exactly and that the optimization solver is monotonic,
\ie it guarantees $\rho(H(\theta_\mathrm{BBBS})) \leq \rho(H(\theta_\star))$, where $\theta_\star$ is the maximizer 
computed in line~4.
Recall that~$f(\cdot)$ is defined in~\eqref{eq:remap}, and note that the method reduces to a BBBS-like iteration using \eqref{eq:nr_opt_abs}
if line~4 is replaced by \mbox{$\gamma \gets \rho(H(\theta_\mathrm{BBBS}))$}.
Running optimization from other angles in $\mathcal{M}$ (in addition to $\theta_\mathrm{BBBS}$) every iteration 
may also be advantageous, particularly if this can be done via parallel processing.
Adding zero to the initial set $\mathcal{M}$ avoids having to deal 
with any ``wrap-around" level-set intervals due to the periodicity of $\rho(H(\theta))$.
}
\end{algfloat}

Suppose $\rho(H(\theta))$ is attained by a unique eigenvalue~$\lambda_j$ 
with normalized eigenvector $x_j$.
Then by standard perturbation theory for simple eigenvalues, 
\beq
	\rho^\prime(H(\theta)) = \sgn(\lambda_j) \cdot x_j^*H^\prime(\theta)x_j = \sgn(\lambda_j) \cdot x_j^*\left(\tfrac{\imagunit}{2} \left( \eit A - \emit A^* \right) \right) x_j.
\eeq
Thus, given $\lambda_j$ and $x_j$, the additional cost of obtaining $\rho^\prime(H(\theta))$ 
mostly amounts to the single matrix-vector product $H^\prime(\theta)x_j$.
To compute $\rho^{\prime\prime}(H(\theta))$, we will need the following  result for second derivatives
of eigenvalues; see \cite{Lan64}.
\begin{theorem}
\label{thm:eigdx2}
For $t \in \R$, let $A(t)$ be a twice-differentiable $n \times n$ Hermitian matrix family
with, for $t=0$, eigenvalues $\lambda_1 \geq \ldots \geq \lambda_n$ and 
associated eigenvectors $x_1,\ldots,x_n$, with $\|x_k\| = 1$ for all~$k$.
Then assuming $\lambda_j$ is unique,
\[
	\lambda_j^{\prime\prime}(t) \bigg|_{t=0}= x_j^* A''(0) x_j + 2 \sum_{k \ne j} \frac{| x_k^* A'(0) x_j |^2}{\lambda_k - \lambda_j}.
\]
\end{theorem}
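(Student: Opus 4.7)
The plan is to use a standard smooth-perturbation argument, made available by the simplicity of $\lambda_j$. Since $A(t)$ is twice differentiable and Hermitian and $\lambda_j$ is a simple eigenvalue of $A(0)$, the implicit function theorem (applied to the characteristic polynomial) furnishes a twice-differentiable eigenpair $(\lambda_j(t), x_j(t))$ in a neighborhood of $t=0$ satisfying $A(t)x_j(t) = \lambda_j(t) x_j(t)$ with $\lambda_j(0) = \lambda_j$ and $x_j(0) = x_j$; a global phase can furthermore be chosen so that $\|x_j(t)\| = 1$ and $x_j^* x_j'(0) = 0$ (the latter comes from differentiating $x_j(t)^* x_j(t) = 1$ and then killing the imaginary part by phase).

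I would then differentiate the eigenvalue equation twice and evaluate at $t=0$, yielding
\beq
A''(0) x_j + 2 A'(0) x_j'(0) + A(0) x_j''(0) = \lambda_j''(0) x_j + 2 \lambda_j'(0) x_j'(0) + \lambda_j \, x_j''(0).
\eeq
Left-multiplying by $x_j^*$ and using the Hermiticity identity $x_j^* A(0) = \lambda_j x_j^*$ cancels the $x_j''(0)$ contributions, while $x_j^* x_j'(0) = 0$ removes the $\lambda_j'(0)$ term, leaving
\beq
\lambda_j''(0) = x_j^* A''(0) x_j + 2 x_j^* A'(0) x_j'(0).
\eeq

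The remaining step is to express $x_j'(0)$ in the orthonormal eigenbasis $\{x_1,\ldots,x_n\}$ of $A(0)$. Left-multiplying the once-differentiated equation $A'(0) x_j + A(0) x_j'(0) = \lambda_j'(0) x_j + \lambda_j x_j'(0)$ by $x_k^*$ with $k \ne j$, and again using $x_k^* A(0) = \lambda_k x_k^*$ together with orthogonality $x_k^* x_j = 0$, yields
\beq
x_k^* x_j'(0) = \frac{x_k^* A'(0) x_j}{\lambda_j - \lambda_k},
\eeq
which is well defined since $\lambda_j$ is simple. Combined with $x_j^* x_j'(0) = 0$, this gives the basis expansion of $x_j'(0)$ and so
\beq
x_j^* A'(0) x_j'(0) = \sum_{k\ne j} \frac{(x_j^* A'(0) x_k)(x_k^* A'(0) x_j)}{\lambda_j - \lambda_k} = \sum_{k\ne j} \frac{|x_k^* A'(0) x_j|^2}{\lambda_j - \lambda_k},
\eeq
where Hermiticity of $A'(0)$ collapses the numerator to a squared modulus. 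Substitution yields the claimed formula (up to the sign convention in the denominator).

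The only genuinely delicate issue is justifying the existence of a twice-differentiable eigenpair branch $(\lambda_j(t), x_j(t))$ through $t=0$; this is routine given the simplicity hypothesis (see, e.g., Kato), and once granted, everything else is bookkeeping. A secondary subtlety is the phase choice ensuring $x_j^* x_j'(0) = 0$, but any fixed-normalization choice that is consistent with $\|x_j(t)\|=1$ suffices here because only the projection of $x_j'(0)$ onto $\operatorname{span}\{x_k\}_{k \ne j}$ enters the final expression.
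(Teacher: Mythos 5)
The paper offers no proof of this statement at all---it is quoted as a known result with a pointer to Lancaster's 1964 paper---so there is nothing to compare against except that reference, and your argument is precisely the standard derivation one finds there: differentiate $A(t)x_j(t)=\lambda_j(t)x_j(t)$ twice, project onto $x_j$, and resolve $x_j'(0)$ in the eigenbasis via the first-order equation. Your handling of the genuinely delicate points is also right: simplicity of $\lambda_j$ gives a $C^2$ eigenpair branch, the normalization only forces $\Re\big(x_j^*x_j'(0)\big)=0$ so a phase choice (or your observation that only the component of $x_j'(0)$ orthogonal to $x_j$ survives, since the purely imaginary part of $x_j^*x_j'(0)$ cancels against the $\lambda_j'(0)$ term) is needed, and the denominators $\lambda_j-\lambda_k$ are nonzero by simplicity.

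One thing you should not wave away as a ``sign convention'': the denominator you derive, $\lambda_j-\lambda_k$, is the correct one, and the statement as printed in the paper, with $\lambda_k-\lambda_j$, is simply wrong by a sign. A quick sanity check: for a linear Hermitian family $A(t)=A+tB$ the largest eigenvalue $\lambda_1(t)$ is convex (a pointwise supremum of the affine functions $t\mapsto x^*(A+tB)x$ over unit vectors $x$), so $\lambda_1''(0)\ge 0$; since $A''\equiv 0$, your formula gives $2\sum_{k\ne 1}|x_k^*Bx_1|^2/(\lambda_1-\lambda_k)\ge 0$ as it must, whereas the printed denominator would force $\lambda_1''(0)\le 0$. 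So your proof is correct and in fact corrects a typo in the statement; you should say so plainly rather than hedge.
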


Although obtaining the eigendecomposition of $H(\theta)$ is cubic work, 
this is generally negligible compared to the cost of obtaining all the unimodular eigenvalues of $R_\gamma - \lambda S$
when using \cref{thm:level} computationally;
recall that $H(\theta)$ is an $n \times n$ Hermitian matrix, 
while $R_\gamma - \lambda S$ is a generalized eigenvalue problem of order $2n$.
Moreover, $H^\prime(\theta)x_j$ would already be computed for~$\rho^\prime(H(\theta))$,
while
$ 	
	H^{\prime\prime}(\theta)x_j = -H(\theta)x_j = -\lambda_j x_j
$,
so there is no other work of consequence to obtain $\rho^{\prime\prime}(H(\theta))$ via \cref{thm:eigdx2}.

\begin{table}
\centering
\footnotesize 
\caption{The running time of a given operation \emph{divided by} the running time of \texttt{eig($H(\theta)$)}
for random $A \in \C^{n \times n}$.
Eigenvectors were requested for~$H(\theta)$ (for computing derivatives and boundary points)
but not for $R_\gamma - \lambda S$.
For \texttt{eigs}, \texttt{k} is the number of eigenvalues requested,
while \texttt{'LM'} (largest modulus), \texttt{'LR'} (largest real), and \texttt{'BE'} (both ends)
specifies which eigenvalues are desired.
}
\setlength{\tabcolsep}{3pt} 
\begin{tabular}{c | r SS | SS | SSS | S} 
\toprule
\multicolumn{2}{c}{} &
\multicolumn{2}{c}{\texttt{eigs($H(\theta)$,k,'LM')}} &
\multicolumn{2}{c}{\texttt{eigs($H(\theta)$,k,'LR')}} &
\multicolumn{3}{c}{\texttt{eigs($H(\theta)$,k,'BE')}} & 
\multicolumn{1}{c}{\texttt{eig($R_\gamma$,$S$)}}  \\
\cmidrule(lr){3-4}
\cmidrule(lr){5-6}
\cmidrule(lr){7-9}
\cmidrule(lr){10-10}
\multicolumn{1}{c}{} &
\multicolumn{1}{c}{$n$} & 
\multicolumn{1}{c}{$\texttt{k}=1$} & 
\multicolumn{1}{c}{$\texttt{k}=6$} & 
\multicolumn{1}{c}{$\texttt{k}=1$} & 
\multicolumn{1}{c}{$\texttt{k}=6$} & 
\multicolumn{1}{c}{$\texttt{k}=2$} & 
\multicolumn{1}{c}{$\texttt{k}=4$} & 
\multicolumn{1}{c}{$\texttt{k}=6$} & 
\multicolumn{1}{c}{} \\ 
\midrule
\multirow{4}{*}{\rotatebox[origin=c]{90}{Dense $A$}}
& \multicolumn{1}{r|}{200}  &   6.2 &   1.9 &   1.0 &   1.2 &   1.9 &   1.2 &   1.2 &  33.9 \\ 
& \multicolumn{1}{r|}{400}  &   0.6 &   0.9 &   0.5 &   0.8 &   0.6 &   0.7 &   0.8 &  75.2 \\ 
& \multicolumn{1}{r|}{800}  &   0.7 &   1.3 &   0.6 &   1.1 &   1.3 &   1.1 &   1.1 & 175.0 \\ 
& \multicolumn{1}{r|}{1600} &   0.4 &   0.6 &   0.3 &   0.6 &   0.6 &   0.7 &   0.6 & 205.1 \\ 
\midrule
\multirow{4}{*}{\rotatebox[origin=c]{90}{Sparse $A$}}
& \multicolumn{1}{r|}{200}  &   4.6 &   3.7 &   1.9 &   3.3 &   3.3 &   3.6 &   3.6 &  42.8 \\ 
& \multicolumn{1}{r|}{400}  &   2.0 &   3.1 &   1.7 &   3.0 &   3.1 &   3.2 &   3.1 &  84.1 \\ 
& \multicolumn{1}{r|}{800}  &   0.8 &   1.2 &   0.7 &   1.2 &   1.4 &   1.1 &   1.3 & 172.7 \\ 
& \multicolumn{1}{r|}{1600} &   0.4 &   0.7 &   0.4 &   0.7 &   0.8 &   0.9 &   0.7 & 199.0 \\ 
\bottomrule
\end{tabular} 
\label{tbl:eig}
\end{table}

Pseudocode
for our improved level-set algorithm is given in~\cref{alg1}.
We now address some implementation concerns.
What method is used to find maximizers of~$\rho(H(\theta))$
depends on the relative costs of solving eigenvalue problems 
involving $H(\theta)$ and $R_\gamma - \lambda S$.
\cref{tbl:eig} shows examples where 34--205 calls of 
\texttt{eig($H(\theta)$)} can be done before the total cost exceeds that of 
a single call of \texttt{eig($R_\gamma$,$S$)}.
This highlights just how beneficial it can be to incur a few more computations with $H(\theta)$ to find local maximizers
as \cref{alg1} does.
Comparisons for computing extremal eigenvalues of~$H(\theta)$ via \texttt{eig} and \texttt{eigs}
are also shown in the table.  Such data inform whether
or not the increased cost of needing to use \texttt{eig} in order to compute
$\rho^{\prime\prime}(H(\theta))$ is offset by the advantages that second derivatives can bring,
\eg faster local convergence.  
Of course, fine-grained implementation decisions like these should ideally be made via tuning, 
as such timings are generally also software and hardware dependent.
Nevertheless, \cref{tbl:eig} suggests that implementing \cref{alg1} using Newton's method via \texttt{eig}
might be a bit more efficient than using the secant method for $n \leq 800$ or so.\footnote{
Subspace methods such as \cite{KreLV18} might also be used to find local maximizers of $h(\theta)$ or~$\rho(H(\theta))$
and would likely provide similar benefits in terms of accelerating 
the globally convergent algorithms in this paper.
}

There is one more subtle but important detail for implementing \cref{alg1}.
Suppose that $\theta_\mathrm{BBBS}$ in line~3 is close to the argument of a (nearly) double unimodular eigenvalue of $R_\gamma - \lambda S$, where $\gamma = \rho(H(\theta_\mathrm{BBBS}))$.
If rounding errors prevent this one or two eigenvalues from being detected as unimodular,
the computed $\gamma$-level set of $\rho(H(\theta))$ may be incomplete,
which again, can cause stagnation.  
As pointed out in~\cite[p.~372--373]{BurLO03} in the context of computing the pseudospectral abscissa,
a robust fix is simple: explicitly add $a(\theta_\mathrm{BBBS})$ to $\Theta$ in line~6
if it appears to be missing.

\section{Analysis of Uhlig's method}
\label{sec:rate}
In the next two subsections, we respectively 
(a) analyze the overall cost of Uhlig's method for so-called
\emph{disk matrices} and 
(b) for general problems, 
establish how the exact Q-linear local rate of convergence of Uhlig's cutting strategy
varies with respect to the local curvature of $\bfovA$ at outermost points.
A disk matrix is one whose field of values is a circular disk centered at the origin,
and it is a worst-case scenario for Uhlig's method; as we show in this case,
the required number of supporting hyperplanes to compute $\numr$ blows up 
with respect to increasing the desired relative accuracy.
Although relatively rare, disk matrices naturally arise from minimizing the numerical radius
of parametrized matrices; see~\cite{LewO20} for a thorough discussion.
For concreteness here, we make use of the $n \times n$ Crabb matrix:
\beq
	\label{eq:crabb}
	K_2 = \begin{bmatrix}
	0 & 2 \\
	0 & 0 
	\end{bmatrix},
	\ 
	K_3 = \begin{bmatrix}
	0 & \sqrt{2} & 0  \\
	0 & 0 & \sqrt{2} \\ 
	0 & 0 & 0
	\end{bmatrix},
	\ 
	K_n = 
	\begin{bmatrix}
	0 & \sqrt{2} &  & & & \\
	& . & 1 & & &  \\
	& & . & . & &  \\
	& & & . & 1 & \\
	& & &  & . & \sqrt{2} \\
	& & & & & 0 \\
	\end{bmatrix},
\eeq
where for all $n$, $r(K_n) = 1$ and $W(K_n)$ is the unit disk.
However, note that not all disk matrices are variations of Jordan blocks corresponding 
to the eigenvalue zero.  
For other types of disk matrices and the history and relevance of $K_n$, see~\cite{LewO20}.

\subsection{Uhlig's method for disk matrices}
The following theorem completely characterizes the total cost of Uhlig's method
for disk matrices with respect to a desired relative tolerance.
Note that Uhlig's method begins with a rectangular approximation~$\mathcal{G}_4$ to $\fovA$,
which for a disk matrix, is a square centered at the origin.

\begin{theorem}
\label{thm:uhlig_disk}
Suppose that $A \in \C^{n \times n}$ is a disk matrix with $\numr > 0$
and that~$\fovA$ is approximated by $\mathcal{G}_j$ with $j \geq 3$
and $\mathcal{G}_j$ a regular polygon, \ie 
it is the intersection of $j$ half planes $P_{\theta_\ell}$, where
$\theta_\ell = \tfrac{2\pi }{j} \ell$ for $\ell = 1,\ldots,j$.  
Then,
\begin{enumerate}[label=(\roman*),font=\normalfont]
\item $\eps_j = \sec(\pi/j) - 1$, 
\item if $\eps_j \leq \tau_\mathrm{tol}$,
	then $j \geq  \left\lceil \tfrac{\pi}{\arcsec(1 + \tau_\mathrm{tol})} \right\rceil$,
	where $\tau_\mathrm{tol} > 0$ is the desired relative error.
\end{enumerate} 
Moreover, if $\mathcal{G}_k$ is a further refined version of $\mathcal{G}_j$, 
so $\fovA \subseteq \mathcal{G}_k \subseteq \mathcal{G}_j$, then
\begin{enumerate}[label=(\roman*),font=\normalfont,resume]
\item if $\eps_k < \eps_j$, then $k \geq 2j$.
\item if $\eps_k \leq \tau_\mathrm{tol} < \eps_j$, then $k \geq j \cdot 2^d$,
	where 
	$d = \left\lceil \log_2 \left(\tfrac{\pi}{j \arcsec(1 + \tau_\mathrm{tol})}\right) \right\rceil$.
\end{enumerate}
\end{theorem}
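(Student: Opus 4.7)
The plan is to reduce the entire argument to planar geometry by exploiting the disk-matrix hypothesis: $\fovA$ is a disk of radius $\numr$ centered at the origin, and so every supporting hyperplane $L_\theta$ is tangent to this disk at the boundary point $z_\theta = \numr \cdot \emit$. Two immediate consequences drive the proof: every element of any $\mathcal{Z}_k$ has modulus exactly $\numr$ (so $l_k = \numr$), and every corner of any polygon circumscribing the disk lies at distance $\numr\sec(\delta/2)$ from the origin, where $\delta$ is the angular gap between the two tangent lines meeting there.

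For part (i), the regular $j$-gon $\mathcal{G}_j$ has $j$ equal angular gaps of $2\pi/j$, so $u_j = \numr\sec(\pi/j)$, and $\eps_j = \sec(\pi/j) - 1$ follows directly from the definition of $\eps_j$. For part (ii), I would invert $\sec(\pi/j) - 1 \leq \tau_\mathrm{tol}$ by monotonicity of $\sec$ on $[0,\pi/2)$ and take the ceiling since $j \in \N$.

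For (iii), the key observation is that $\mathcal{G}_k$ is itself a polygon circumscribing the disk whose $k$ tangent points, being a refinement of $\mathcal{G}_j$'s, must include the $j$ original uniformly spaced ones. Its tolerance is governed by the largest angular gap: $\eps_k = \sec(\delta_{\max}/2) - 1$. The assumption $\eps_k < \eps_j$ forces $\delta_{\max} < 2\pi/j$, so each of the $j$ original arcs of length $2\pi/j$ must contain at least one new tangent point, giving $k - j \geq j$.

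For (iv), I would invoke the binary-subdivision dynamics of Uhlig's greedy cutting rule: each cut places a tangent point at the bisector of the arc of the chosen largest corner. Starting from the regular $j$-gon, all $j$ original corners share the maximum modulus $\numr\sec(\pi/j)$, and bisecting any one of them produces two strictly smaller corners at modulus $\numr\sec(\pi/(2j))$. Hence the greedy rule is forced to cut all $j$ original corners before the maximum can drop, and after those $j$ cuts, by symmetry, the result is a regular $2j$-gon $\mathcal{G}_{2j}$ with $\eps_{2j} = \sec(\pi/(2j)) - 1$. Iterating this doubling argument yields a regular $2^d j$-gon at $k = 2^d j$ with tolerance $\sec(\pi/(2^d j)) - 1$, while $\eps_k$ stays constant between consecutive doublings. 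Setting $\sec(\pi/(2^d j)) - 1 \leq \tau_\mathrm{tol}$ and solving for the smallest admissible integer $d$ produces the claimed formula, and the conclusion $k \geq j \cdot 2^d$ follows.

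The main obstacle I expect is the inductive bookkeeping in (iv): one must verify, for arbitrary tie-breaking among equal-largest corners, that Uhlig's greedy rule cannot visit a newly created (smaller) corner before exhausting all remaining original large corners at a given stage, and that the polygon returned at each doubling point is indeed again regular so that the induction can be iterated cleanly. Everything else reduces to elementary trigonometry and the monotonicity of $\sec$ once the gap-versus-modulus formula has been recorded.
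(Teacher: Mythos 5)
Your proposal is correct and follows essentially the same route as the paper: the tangent-point/angular-gap geometry for (i)--(iii) is exactly the paper's right-triangle argument, and your bound for (iii) via the largest gap is the paper's ``all $j$ corners must be refined'' in different words. For (iv) you supply explicitly what the paper only asserts in one line (``the error only decreases when $j$ is doubled''), namely that Uhlig's greedy bisection cuts must exhaust all equal-modulus corners at each level and thus pass through regular $2^m j$-gons; this is a worthwhile elaboration rather than a different approach, and your worry about tie-breaking resolves itself since newly created corners always have strictly smaller modulus than the remaining uncut ones.
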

\begin{proof}
As $\fovA$ is a disk centered at zero with radius $\numr$ and 
$\bfovA$ is a circle inscribed in the regular polygon $\mathcal{G}_j$,
every boundary point in $\mathcal{Z}_j$ has modulus~$\numr$, and so $l_j = \numr$,
and the moduli of the corners of~$\mathcal{G}_j$ are all identical.
Consider the corner $c$ with $\Arg(c) = \pi/j$
and the right triangle defined by zero, $\numr$ on the real axis, and $c$.
Then~\mbox{$|c| = u_j = \numr  \sec(\pi/j)$}, and so~(i) holds.
Statement~(ii) simply holds by substituting~(i) into $\eps_j \leq \tau_\mathrm{tol}$
and then solving for $j$.
For~(iii), as~$\eps_j = |c|$ for any corner~$c$ of~$\mathcal{G}_j$, 
all $j$~corners of $\mathcal{G}_j$ must be refined to lower the error;
thus,~$\mathcal{G}_k$ must have at least~$2j$~corners.
Finally, as $\lim_{j \to \infty} \eps_j = 0$,
but the error only decreases when~$j$ is doubled, 
it follows that in order for $\eps_k \leq \tau_\mathrm{tol}$ to hold,
$k \geq j \cdot 2^d$ for some~$d \geq 1$.
The smallest possible integer is obtained by replacing~$j$ in~(ii) with $j \cdot 2^d$ 
and solving for~$d$, thus proving~(iv).
\end{proof}

\begin{table}
\centering
\footnotesize
\caption{
For any disk matrix $A$, 
the minimum number of supporting hyperplanes required to compute $\numr$ to different accuracies is shown, 
where $\texttt{eps} \approx 2.22~\times~10^{-16}$.
}
\setlength{\tabcolsep}{6pt} 
\begin{tabular}{l | *{10}{S[table-format=9.0]}} 
\toprule
\multicolumn{1}{c}{} &
\multicolumn{3}{c}{\# of supporting hyperplanes needed}\\
\cmidrule(lr){2-4}
\multicolumn{1}{c}{}& 
\multicolumn{1}{c}{Minimum} & 
\multicolumn{2}{c}{Uhlig's method}\\ 
\cmidrule(lr){2-2}
\cmidrule(lr){3-4}
\multicolumn{1}{c}{Relative Tolerance} & 
\multicolumn{1}{c}{}& 
\multicolumn{1}{c}{Starting with $\mathcal{G}_3$} &  
\multicolumn{1}{c}{Starting with $\mathcal{G}_4$}  \\
\midrule
$\tau_\mathrm{tol} = \texttt{1e-4}$   &        223 &        384 &        256 \\ 
$\tau_\mathrm{tol} = \texttt{1e-8}$   &      22215 &      24576 &      32768 \\ 
$\tau_\mathrm{tol} = \texttt{1e-12}$  &    2221343 &    3145728 &    4194304 \\ 
$\tau_\mathrm{tol} = \texttt{eps}$    &  149078414 &  201326592 &  268435456 \\ 
\bottomrule
\end{tabular} 
\label{tbl:disk_thm}
\end{table}

Via \cref{thm:uhlig_disk}, we report the number of 
supporting hyperplanes needed to compute the numerical radius of disk matrices
for increasing levels of accuracy in~\cref{tbl:disk_thm}, 
illustrating just how quickly the cost of Uhlig's method skyrockets.
Combined with the timing data from~\cref{tbl:eig},
it is clear that the level-set approach will typically be much faster for disk matrices
or those whose fields of values are close to a disk centered at zero; 
indeed, since $h(\theta)=\rho(H(\theta))$ is constant for disk matrices, it converges in a single iteration.

\subsection{Local rate of convergence of Uhlig's cuts}
\label{sec:uhlig_conv}
As we now explain, the local behavior of Uhlig's cutting procedure
at an outermost point in~$\fovA$
can actually be understood by analyzing one key example.  
Note that we are making a distinction here between Uhlig's method as a whole
and his cutting procedure, since we need the notion of the latter 
for our local analysis; we also use  \emph{cutting strategy} or simply just \emph{cuts}
as synonyms for \emph{cutting procedure}.
For this analysis, we use Q-linear and Q-superlinear convergence,
where ``Q" stands for ``quotient"; see~\cite[p.~619]{NocW99}.

\begin{definition}
Let $b_\star$ be an outermost point of $\fovA$ 
such that the radius of curvature $\tilde r$ of~$\bfovA$ is well defined at $b_\star$.
Then the \emph{normalized radius of curvature of~$\bfovA$ at~$b_\star$} 
is~\mbox{$\mu \coloneqq \tilde r/ \numr \in [0,1]$}.
\end{definition}

Note that if $\mu = 0$, $b_\star$ is a corner of $\fovA$.
If $\mu = 1$, then near~$b_\star$, $\bfovA$ is well approximated by 
an arc of the circle with radius $\numr$ centered at the origin.
We show that the local convergence
is precisely determined by the value of $\mu$ at~$b_\star$.
In the upcoming analysis we use the following assumptions.

\begin{assumption}
\label{asm:wlog}
We assume that $\numr > 0$ and that it is attained at a non-corner $b_\star \in \bfovA$ with $\Arg(b_\star) = 0$.
\end{assumption}

\cref{asm:wlog} is essentially without any loss of generality.
Assuming $\numr > 0$ is trivial, as it only excludes $A=0$.  Since we are concerned
with finding the local rate of convergence at $b_\star$,
its location does not matter, and so we can assume a convenient one, that $b_\star$ 
is on the positive real axis.
As will be seen, our analysis does not lose any generality by assuming 
that $b_\star$ is not a corner.

\begin{assumption}
\label{asm:c2}
We assume that the current approximation $\mathcal{G}_j$  has been constructed using the supporting hyperplane $L_0$ 
passing through~$b_\star$, and so $b_\star \in \mathcal{Z}_j$,
and that~$\bfovA$ is twice continuously differentiable at $b_\star$.
\end{assumption}

\cref{asm:c2} is also quite mild.
Although by~\cref{asm:wlog} we assume that the outermost point~$b_\star$ is not a corner,
note that if $b_\star$ were (so $\mu = 0$),
then~\mbox{$b_\star \in \mathcal{Z}_j$} generally holds quite early in Uhlig's method
due to the fact that there are an 
infinite number of supporting hyperplanes passing through $b_\star$.
Returning to our assumption that~$b_\star$ is a not a corner ($\mu > 0$), 
it is true that Uhlig's method may sometimes only encounter the supporting 
hyperplane for $b_\star$ in the limit as his method converges.
However, via leveraging local optimization
we can modify Uhlig's algorithm so that~$b_\star$ is quickly and cheaply found
and used to update~$\mathcal{G}_j$ and~$\mathcal{Z}_j$;
we explain how and why this works in more detail in the first paragraph of~\cref{sec:alg2}.
Since such a modification does not alter how Uhlig's cuts are subsequently determined, and its cost is negligible,
it is quite informative to analyze his cutting procedure when~\cref{asm:c2} does hold.
Moreover, in a cutting-plane method, knowing an outermost point $b_\star$ and its associated hyperplane
does not guarantee convergence anyway.
Instead, Uhlig's method only terminates once~$\eps_k$
is sufficiently small for some~$k$,
which means that its cost is generally determined by how quickly 
it can sufficiently approximate~$\bfovA$ in \emph{neighborhoods about the outermost points};
per \cref{sec:background}, when $A$ is not a disk matrix, there can be up to $n$ such points.
The smoothness assumption ensures that there exists a unique osculating circle of $\bfovA$ at $b_\star$,
and consequently, 
the disagreement of the osculating circle and~$\bfovA$ decays at least cubicly as~$b_\star$ is approached;
for more on osculation, see, \eg \cite[chapter~2]{Kuh15}.

\begin{keyremark}
\label{key:why}
By our assumptions, $b_\star \in \mathcal{Z}_j$, and $\bfovA$
is twice continuously differentiable and has normalized radius of curvature~$\mu > 0$ at $b_\star$.
Since $\bd \mathcal{G}_j$ is a piecewise linear approximation of $\bfovA$,
the local behavior of a cutting-plane method 
is determined by the resulting second-order 
approximation errors, with
the higher-order errors being negligible. 
As $\bfovA$ is curved at $b_\star$, these second-order errors must be non-zero on both sides of~$b_\star$.
Now recall that the osculating circle of~$\bfovA$ at $b_\star$ locally agrees with~$\bfovA$ to at least second order.
Hence, near~$b_\star$, the second-order errors of a cutting-plane method applied to $A$
 \emph{are identical} to the second-order errors of applying the method to 
 a matrix $M$ with $\bfovX{M}$ being the same as that osculating circle.
Thus, to understand the local convergence rate  
of a cutting-plane method for general matrices, it actually suffices to study how the method
behaves on~$M$.
\end{keyremark}

We now define our key example $M$ such that, via two real parameters,
$\bfovX{M}$ is the osculating circle at the outermost point $b_\star$

\begin{figure}
\centering
\subfloat[Iteration $j$.]{
\resizebox*{6.0cm}{!}{\includegraphics[trim=1.3cm 0cm 1.3cm 0cm,clip]{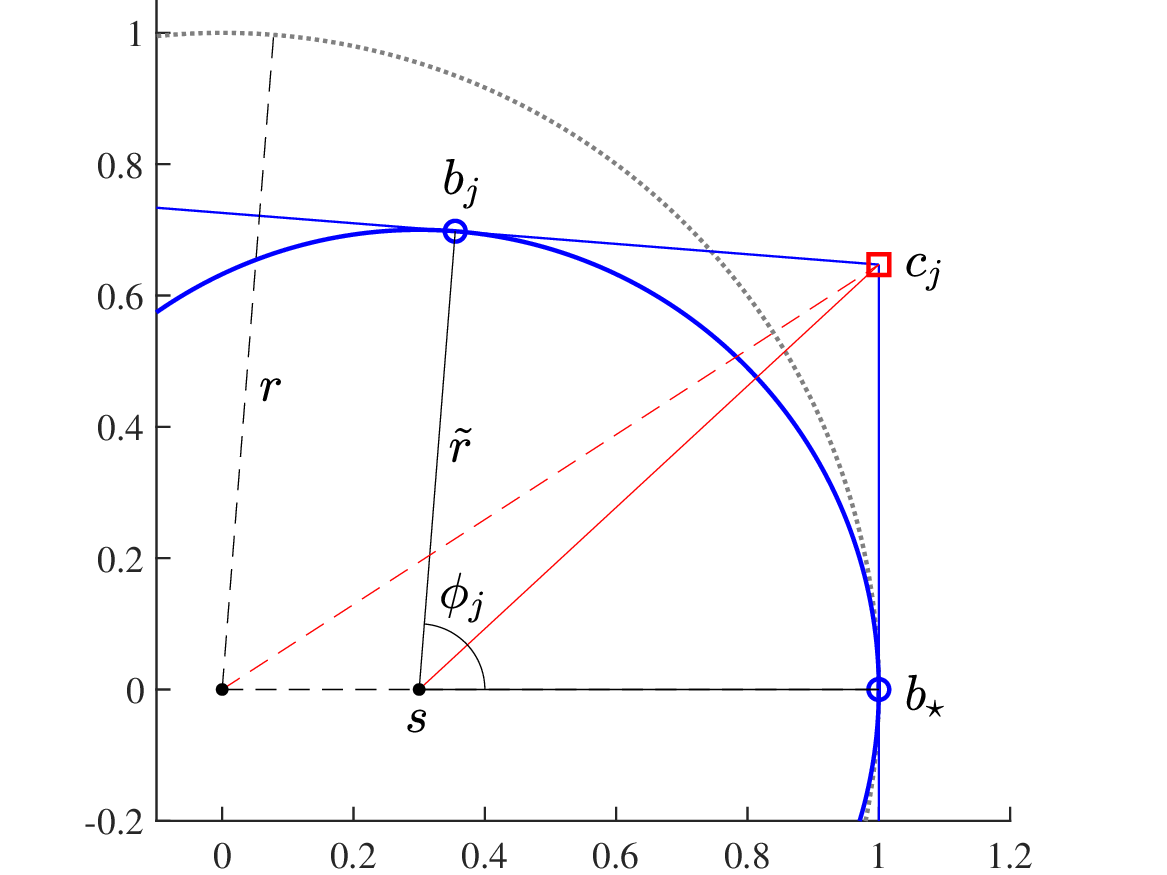}}
\label{fig:uhlig_j}
}
\subfloat[Iteration $j+1$.]{
\resizebox*{6.0cm}{!}{\includegraphics[trim=1.3cm 0cm 1.3cm 0cm,clip]{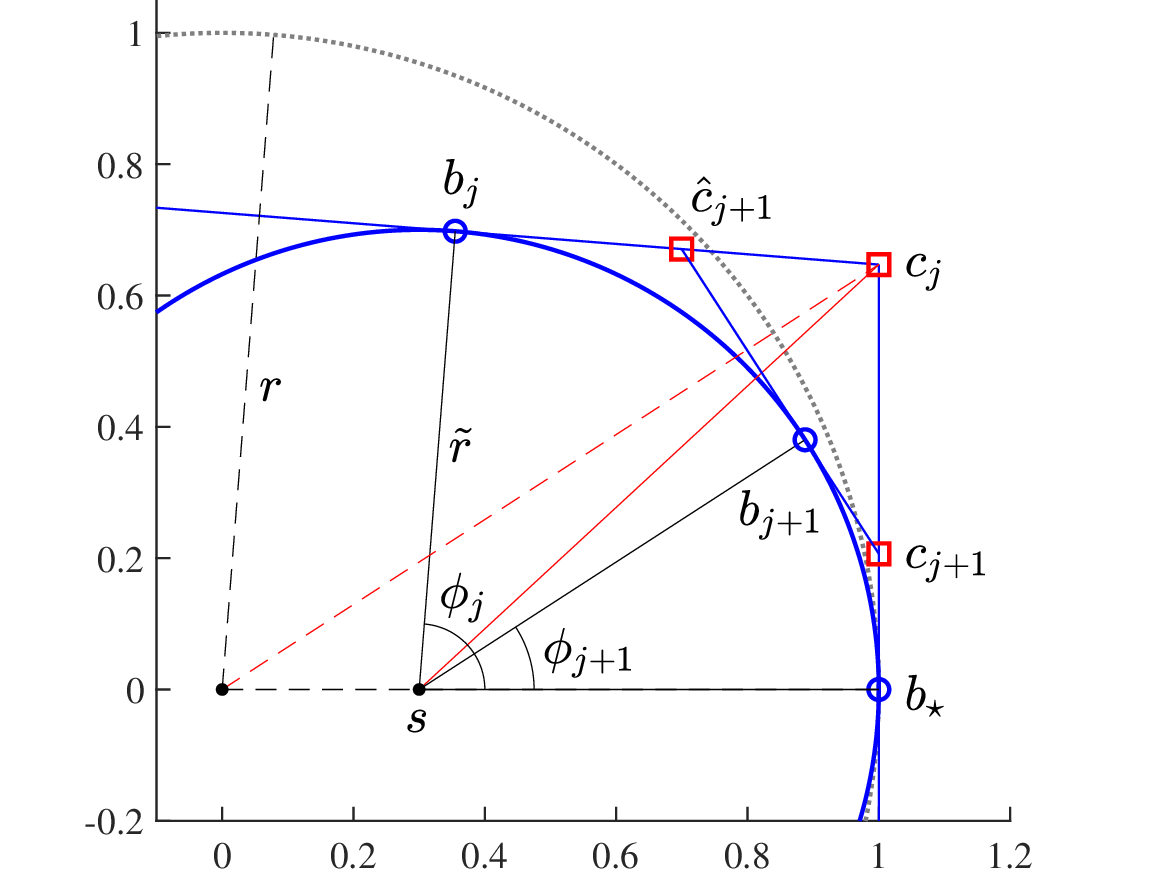}}
\label{fig:uhlig_jp1}
}
\caption{Depiction of Uhlig's cutting procedure on \cref{ex:circ} where~$z = 0.3$ and~\mbox{$\tilde r = 0.7$}.
The dotted circle is the circle of radius~$r(M) = 1$ centered at the origin.
See \cref{ex:circ} for a complete description of the plots.
}
 \label{fig:uhlig}
\end{figure}

\def\cm{s}

\begin{example}[See \cref{fig:uhlig} for a visual description]
\label{ex:circ}
For $n \geq 2$, let 
\begin{equation}
	\label{eq:model}
	M = \cm I + \tilde r K_n,
\end{equation}
where $K_n  \in \C^{n \times n}$ is any disk matrix with $r(K_n)=1$, e.g., \eqref{eq:crabb},
$\cm \geq 0$, and~$\tilde r > 0$.
Clearly, $\fovX{M}$ is a disk with radius $\tilde r$ centered at $\cm$  on the real axis with 
outermost point~$b_\star = \cm + \tilde r = r$
and $\mu = \tfrac{\tilde r}{r} > 0$ at $b_\star$, 
where~$r \coloneqq r(M)$, a shorthand we will often use in \cref{sec:rate}~and~\cref{sec:alg2}.
Thus, given any matrix~$A$ satisfying \cref{asm:wlog,asm:c2} with any $\mu \in (0,1]$,
by choosing $\cm \geq 0$ and~$\tilde r > 0$ appropriately
we have that~$\bfovX{M}$ agrees exactly with the osculating circle 
of~$\bfovA$ at~$b_\star$.  
Assume that~$\theta_\star = 0$ and~$\theta_j \in (-\pi,0)$
have been used to construct~$\mathcal{G}_j$ and~$\mathcal{Z}_j$, \ie
supporting hyperplanes~$L_{\theta_\star}$ and~$L_{\theta_j}$ respectively pass through 
boundary points~$b_\star$ and~$b_j = \cm + \tilde r \eix{\phi_j}$ of $\fovX{M}$,
where~\mbox{$\phi_j \coloneqq \Arg (b_j - \cm) \in (0,\pi)$}.
Therefore, $b_\star = z_0 \in \mathcal{Z}_j$ and since $\fovX{M}$ is a disk, 
it also follows that $b_j=z_{\theta_j} \in \mathcal{Z}_j$, where~$\theta_j = -\phi_j$.
Further assume that for all~\mbox{$\theta \in (\theta_j,\theta_\star)$}, $z_\theta \not\in \mathcal{Z}_j$,
and so~$c_j \coloneqq L_{\theta_\star} \cap L_{\theta_j}$ is a corner of~$\mathcal{G}_j$.
Now suppose~$c_j$ is cut, by any cutting procedure, Uhlig's  or otherwise.
This results in a boundary point~$b_{j+1}$ of $\fovX{M}$ being added to $\mathcal{Z}_j$ 
and $c_j$ is replaced by two new corners, $\hat c_{j+1}$ and~$c_{j+1}$,
that respectively lie on~$L_{\theta_j}$ and $L_{\theta_\star}$; 
due to their orientation with respect to~$b_{j+1}$, we refer to 
$\hat c_{j+1}$ as a counter-clockwise (CCW) corner and~$c_{j+1}$ as a clockwise~(CW) corner.
If~$c_{j+1}$ is then cut next, this produces two more corners,~$\hat c_{j+2}$
and~$c_{j+2}$, with $c_{j+2}$ also on~$L_{\theta_\star}$ and between $c_{j+1}$ and~$b_\star$.
Note that if~$\{\phi_k\} \to 0$, then the sequence of CW corners $\{c_k\} = c_j,c_{j+1},c_{j+2},\ldots$ 
converges to $b_\star$.
To understand the local behavior of a cutting-plane technique, we will
analyze~$\{\phi_k\}$ and~$\{c_k\}$, \ie the case when the cuts are applied 
to the CW corners that are sequentially generated~on~$L_{\theta_\star}$.
\end{example}

Some remarks on \cref{ex:circ} are in order,
as it ignores the CCW corners  $\hat c_j$,
and cutting these CCW corners may also introduce new corners that need to be cut.
However, since $\{c_k\}$ is a subsequence of all the corners generated
by Uhlig's method 
to sufficiently approximate $\bfovX{M}$ between boundary points~$b_\star$ and~$b_j$,
analyzing $\{c_k\}$ gives a lower bound on its local efficiency.  
Furthermore, this often describes the true local efficiency because, as will become clear, 
for many problems, there are either no or few CCW corners 
that requiring cutting.
Finally, in \cref{sec:alg2}, we introduce an improved cutting scheme
that guarantees only CW corners must be cut.

\begin{lemma}
\label{lem:uhlig_cuts}
Recalling that $\phi_j \coloneqq \Arg (b_j - \cm)$, 
if Uhlig's cuts are sequentially applied to the corners $\{c_k\}$ described in \cref{ex:circ},
then for all $k \geq j$, 
\beq
	\label{eq:uhlig_angle}
	\phi_{k+1} 
	= \arctan \big( \mu \tan \tfrac{1}{2}\phi_k \big).
\eeq
\end{lemma}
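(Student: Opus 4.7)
The plan is a direct trigonometric computation exploiting the circularity of $\fovX{M}$. First I would recall from \cref{ex:circ} the key coordinate setup: $\fovX{M}$ is the disk of radius $\tilde r$ centered at $\cm \geq 0$ on the real axis, $L_0$ is the vertical supporting line $\{z : \Re z = r\}$ through $b_\star = r$, and the boundary point $b_k = \cm + \tilde r \eix{\phi_k}$ lies in the open upper half plane. The associated supporting hyperplane $L_{\theta_k}$ with $\theta_k = -\phi_k$ is the tangent to the disk at $b_k$, perpendicular to the radial direction $\eix{\phi_k}$ from $\cm$, and so it admits the parametrization $\{\cm + \tilde r \eix{\phi_k} + t\, \imagunit \eix{\phi_k} : t \in \R\}$.

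Next I would compute the CW corner $c_k = L_{\theta_k} \cap L_0$. Setting the real part of this parametrization equal to $r = \cm + \tilde r$ and solving for $t$ gives $t = -\tilde r \tan(\phi_k/2)$ via the half-angle identity $(1-\cos\phi)/\sin\phi = \tan(\phi/2)$. Substituting this $t$ back into the imaginary part of the parametrization and simplifying (again using half-angle identities) yields the ordinate $y_k = \tilde r \tan(\phi_k/2)$, so $c_k = r + \imagunit \tilde r \tan(\phi_k/2)$. Because $c_k$ lies in the open first quadrant, its principal argument is $\Arg(c_k) = \arctan(y_k/r) = \arctan(\mu \tan(\phi_k/2))$, using $\mu = \tilde r / r$.

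Finally I would apply Uhlig's cutting rule: the new supporting direction is $\theta_{k+1} = -\Arg(c_k)$, and the corresponding tangent point on the disk is the point reached from the center $\cm$ in direction $\eix{\Arg(c_k)}$. This is straightforward since maximizing $\Re(\emix{\Arg(c_k)} z)$ over $z \in \fovX{M}$ (a disk centered at $\cm$) is attained at $b_{k+1} = \cm + \tilde r \eix{\Arg(c_k)}$. Hence $\phi_{k+1} = \Arg(b_{k+1} - \cm) = \Arg(c_k) = \arctan(\mu \tan(\phi_k/2))$, which is precisely \eqref{eq:uhlig_angle}. The same computation applies at the base step $k = j$, since by the setup of \cref{ex:circ}, $L_{\theta_j}$ is already the tangent to the disk at $b_j$. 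No step is particularly hard; the only care needed is in the sign convention $\theta_{k+1} = -\Arg(c_k)$ and the observation that all relevant angles remain in $(0, \pi/2)$, so $\arctan$ returns the correct principal argument without any quadrant adjustment.
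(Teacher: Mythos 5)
Your proposal is correct and follows essentially the same route as the paper: both compute the CW corner $c_k = L_{\theta_k}\cap L_{0}$, find $\Arg(c_k)=\arctan(\mu\tan\tfrac{1}{2}\phi_k)$, and conclude via $\phi_{k+1}=\Arg(b_{k+1}-\cm)=\Arg(c_k)$. The only cosmetic difference is that you obtain $c_k = r + \imagunit\,\tilde r\tan\tfrac{1}{2}\phi_k$ by explicitly parametrizing and intersecting the tangent lines, whereas the paper shortcuts this with the equal-tangent-length facts $\Arg(c_k-\cm)=\tfrac{1}{2}\phi_k$ and $\tan\tfrac{1}{2}\phi_k=\tilde r^{-1}|c_k-b_\star|$.
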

\begin{proof}
Since $\fovX{M}$ is a disk with tangents at $b_k$ and $b_\star$ determining $c_k$,
first note that we have that $\Arg(c_k - \cm) = \tfrac{1}{2}\phi_k$.
Then \mbox{$\tan \tfrac{1}{2} \phi_k = \tilde r^{-1}|c_k - b_\star|$},
and since the tangent at $b_\star$ is vertical, we also have that
$\Arg(c_k) = \arctan(r^{-1} |c_k - b_\star|)$.
Thus via substitution,
\[	
	\Arg(c_k) 
	= \arctan ( r^{-1} \tilde r \tan \tfrac{1}{2}\phi_k ) 
	= \arctan ( \mu \tan \tfrac{1}{2}\phi_k ).
\]
The proof is completed since $\phi_{k+1} = \Arg (b_{k+1} - \cm)$
is also equal to $\Arg(c_k)$.
\end{proof}

\begin{theorem}
\label{thm:uhlig_angle}
The sequence $\{\phi_k\}$ produced by Uhlig's 
cutting procedure and described by recursion~\eqref{eq:uhlig_angle}
 converges to zero Q-linearly with rate $\tfrac{1}{2} \mu$.
\end{theorem}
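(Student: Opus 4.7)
The plan is to split the argument into two parts: first establish that $\phi_k \to 0$ (so that talking about local rates at the fixed point $0$ is meaningful), and then compute the limit $\lim_{k \to \infty} \phi_{k+1}/\phi_k$ via a Taylor expansion of the recursion \eqref{eq:uhlig_angle} at $\phi = 0$.

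For convergence to zero, I would use the fact that $\mu \in (0,1]$ to bound the recursion by a strict contraction. Starting from $\phi_j \in (0,\pi)$ as in \cref{ex:circ}, one has $\phi_j/2 \in (0,\pi/2)$ so that $\tan(\phi_j/2) > 0$, hence $\phi_{j+1} \in (0,\pi/2) \subset (0,\pi)$; by induction the whole sequence stays in $(0,\pi)$. Then, because $\arctan$ is increasing and $\mu \leq 1$,
\[
\phi_{k+1} = \arctan\bigl(\mu \tan \tfrac{1}{2}\phi_k\bigr)
\leq \arctan\bigl(\tan \tfrac{1}{2}\phi_k\bigr) = \tfrac{1}{2}\phi_k,
\]
so $\phi_k \leq 2^{-(k-j)}\phi_j \to 0$.

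For the rate, the key is that both $\tan x$ and $\arctan x$ are analytic at $0$ with derivative $1$, so $\tan x = x + O(x^3)$ and $\arctan y = y + O(y^3)$ as $x,y \to 0$. Substituting $x = \phi_k/2$ and $y = \mu \tan(\phi_k/2)$ into the recursion gives
\[
\phi_{k+1} = \mu \tan \tfrac{1}{2}\phi_k + O\bigl((\mu\tan\tfrac{1}{2}\phi_k)^3\bigr)
= \tfrac{1}{2}\mu \phi_k + O(\phi_k^3).
\]
Dividing by $\phi_k > 0$ and letting $k \to \infty$ (which is justified by Step 1) yields $\phi_{k+1}/\phi_k \to \tfrac{1}{2}\mu$, which is the definition of Q-linear convergence with rate $\tfrac{1}{2}\mu$ as stated in \cite[p.~619]{NocW99}.

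There is no real obstacle here: the recursion is explicit, the fixed point is $0$, and the derivative of $\arctan(\mu \tan(\phi/2))$ with respect to $\phi$ at $\phi = 0$ is exactly $\tfrac{1}{2}\mu$, so Q-linear convergence with that rate is forced. The only mild care needed is to verify that the sequence enters the regime where the Taylor expansions are valid, and that is handled by the contraction bound $\phi_{k+1} \leq \tfrac{1}{2}\phi_k$ from the convergence step. If one wanted a non-asymptotic bound, the inequalities $\tan x \leq x + c x^3$ on $[0,\pi/4]$ and $\arctan y \leq y$ could be chained to give an explicit constant in the error term, but this is not needed for the stated Q-linear result.
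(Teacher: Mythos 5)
Your proof is correct and follows essentially the same route as the paper: both establish $\phi_k \to 0$ and then extract the rate from the linearization of $\phi \mapsto \arctan(\mu\tan\tfrac{1}{2}\phi)$ at $\phi=0$, the paper via a continuous limit with small-angle approximations and you via explicit Taylor expansions with $O(\phi_k^3)$ remainders. Your contraction bound $\phi_{k+1} \le \tfrac{1}{2}\phi_k$ is a nice touch that makes the convergence-to-zero claim (which the paper simply asserts) fully rigorous.
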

\begin{proof}
First note that $\lim_{k\to\infty} \phi_k = 0 = \phi_\star$ and $\phi_k > 0$ for all $k \geq j$.  Then
\[
	\lim_{k \to \infty}
	\frac{| \phi_{k+1} - \phi_\star |}{| \phi_{k} - \phi_\star |} 
	= \lim_{k \to \infty} \frac{\phi_{k+1}}{\phi_{k}} 
	= \lim_{k \to \infty} \frac{\arctan \left( \mu \tan \tfrac{1}{2} \phi_k \right)}{\phi_k}.
\]
Since the numerator and denominator both go to zero as $k\to\infty$, the result 
follows by considering the continuous version of the limit:
\[
	\lim_{\phi \to 0} \frac{\arctan \big( \mu \tan \tfrac{\phi}{2} \big)}{\phi}
	= \lim_{\phi \to 0} \frac{\mu \tan \tfrac{1}{2}\phi}{\phi}
	= \lim_{\phi \to 0} \frac{\tfrac{1}{2}\mu \phi }{\phi}
	= \tfrac{1}{2} \mu,
\]
where the first and second equalities are
obtained, respectively, using small-angle approximations 
$\arctan x \approx x$ and $\tan x \approx x$ for $x\approx 0$.
\end{proof}

While \cref{thm:uhlig_angle} tells us how quickly $\{\phi_k\}$ will converge,
we really want to estimate how quickly the error $\eps_j$ becomes sufficiently small.
For that, we must consider how fast the moduli of the corresponding outermost corners $c_k$
converge.

\begin{theorem}
\label{thm:uhlig_ub}
Given the sequence 
$\{\phi_k\}$ from \cref{thm:uhlig_angle},
the corresponding sequence~$\{|c_k|\}$ 
converges to $r$ Q-linearly with rate $\tfrac{1}{4} \mu^2$.
\end{theorem}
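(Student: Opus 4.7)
The plan is to reduce the claim to a computation of $|c_k|$ in terms of $\phi_k$, and then take the limit of the error ratio $(|c_{k+1}|-r)/(|c_k|-r)$ using the Q-linear rate from \cref{thm:uhlig_angle}.

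First, I would exploit the geometry already set up in the proof of \cref{lem:uhlig_cuts}: since $c_k$ lies on $L_{\theta_\star}$, which is the vertical tangent to $\fovX{M}$ at $b_\star = r$ on the positive real axis, we have $c_k - b_\star$ purely imaginary, and the same triangle argument gives $|c_k - b_\star| = \tilde r \tan\tfrac{1}{2}\phi_k$. Therefore
\[
    |c_k|^2 = r^2 + \tilde r^2 \tan^2 \tfrac{1}{2}\phi_k = r^2\bigl(1 + \mu^2 \tan^2 \tfrac{1}{2}\phi_k\bigr),
\]
so that $|c_k| = r\sqrt{1 + \mu^2 \tan^2 \tfrac{1}{2}\phi_k}$. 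This gives a clean closed form for the error $|c_k| - r$.

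Next I would form the ratio and pass to the limit. Since $\phi_k \to 0$ (\cref{thm:uhlig_angle}), both $|c_{k+1}| - r$ and $|c_k| - r$ go to zero, and
\[
    \frac{|c_{k+1}| - r}{|c_k| - r}
    = \frac{\sqrt{1 + \mu^2 \tan^2 \tfrac{1}{2}\phi_{k+1}} - 1}{\sqrt{1 + \mu^2 \tan^2 \tfrac{1}{2}\phi_{k}} - 1}.
\]
Applying $\sqrt{1+x}-1 \sim \tfrac{1}{2}x$ and $\tan x \sim x$ as $x \to 0$ (exactly the style of small-angle reasoning used in the proof of \cref{thm:uhlig_angle}), the numerator behaves like $\tfrac{1}{8}\mu^2 \phi_{k+1}^2$ and the denominator like $\tfrac{1}{8}\mu^2 \phi_k^2$. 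Hence the ratio tends to $\lim_{k\to\infty}(\phi_{k+1}/\phi_k)^2$, and by \cref{thm:uhlig_angle} this equals $(\tfrac{1}{2}\mu)^2 = \tfrac{1}{4}\mu^2$, establishing Q-linear convergence at the claimed rate.

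The argument is essentially mechanical once the formula $|c_k|^2 = r^2 + \tilde r^2 \tan^2\tfrac{1}{2}\phi_k$ is in hand; the only mildly subtle point is that one cannot simply multiply the rate from \cref{thm:uhlig_angle} by itself without justification, so I would either (i) phrase the final step as a continuous limit $\lim_{\phi \to 0}$ analogous to \cref{thm:uhlig_angle} and then invoke Q-linearity of $\{\phi_k\}$, or (ii) write $|c_k|-r$ as $\tfrac{r}{8}\mu^2 \phi_k^2 (1 + o(1))$ explicitly and cancel. Either way, the main technical obstacle is simply verifying that the lower-order terms are truly negligible in the limit, which follows from $\phi_k \to 0$.
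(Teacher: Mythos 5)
Your proof is correct, and it reaches the result by a somewhat different route than the paper. Both arguments start from the same geometric fact, just parametrized differently: the paper writes $|c_k| = r\sec\phi_{k+1}$, which after substituting the recursion $\phi_{k+1}=\arctan(\mu\tan\tfrac12\phi_k)$ is exactly your $|c_k| = r\sqrt{1+\mu^2\tan^2\tfrac12\phi_k}$. The divergence is in how the limit of the error ratio is evaluated. The paper substitutes the recursion into $(\sec\phi_{k+1}-1)/(\sec\phi_k-1)$, passes to a continuous limit, and grinds through two applications of L'H\^opital's rule (with an auxiliary multiplication by $\cos\phi/\cos\phi$ to tame the denominator). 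You instead extract the asymptotic form $|c_k|-r = \tfrac{r}{8}\mu^2\phi_k^2\,(1+o(1))$ and then simply square the already-proved rate $\phi_{k+1}/\phi_k \to \tfrac12\mu$ from \cref{thm:uhlig_angle}. Your route is shorter, avoids the derivative bookkeeping entirely, and makes the structural point transparent --- the error in $|c_k|$ is quadratic in the angle error, so the modulus rate is the square of the angle rate; the paper's computation buys nothing extra here beyond being self-contained at the level of the composed secant expression. Your stated caveat is handled correctly: writing the errors as $\tfrac{r}{8}\mu^2\phi_k^2(1+o(1))$ and cancelling is a legitimate justification, since $(\phi_{k+1}/\phi_k)^2 \to (\tfrac12\mu)^2$ by continuity of squaring.
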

\begin{proof}
First note that 
\[
	\cos \phi_{k+1} = \frac{r}{|c_k|}
	\qquad \text{and so} \qquad
	|c_k| = r \sec \phi_{k+1} > r
\]
for all $k\geq j$.
Thus, we consider the limit
\[
	\lim_{k \to \infty} \frac{| |c_{k+1}| - r |}{| |c_k| - r |} 
	= \lim_{k \to \infty} \frac{r \sec \phi_{k+2} - r}{r \sec \phi_{k+1} - r} 
	= \lim_{k \to \infty} \frac{\sec \phi_{k+1} - 1}{\sec \phi_k - 1},
\]
which when substituting in $\phi_{k+1} = \arctan \big( \mu \tan \tfrac{1}{2}\phi_k \big)$ becomes
\[
	\lim_{k \to \infty} 
	\frac{\sec \left( \arctan \left( \mu \tan \tfrac{1}{2}\phi_k \right)\right) - 1}{\sec \phi_k - 1}.
\]
Since the numerator and denominator both go to zero as $k\to\infty$, we 
consider the continuous version of the limit, \ie
\[
	\lim_{\phi \to 0} 
	\frac{\sec \left( \arctan \left( \mu \tan \tfrac{1}{2}\phi \right)\right) - 1}{\sec \phi - 1} 
	= \lim_{\phi \to 0} 
	\frac{\sec \left( \mu \tan \tfrac{1}{2}\phi \right) - 1}{\sec \phi - 1} 
	= \lim_{\phi \to 0} 
	\frac{\sec \left( \tfrac{1}{2}\mu \phi \right) - 1}{\sec \phi - 1},
\]
again using the small-angle approximations for $\arctan$ and $\tan$.
Letting $g(\phi) = \sec(\phi)$, 
and noting that $g(0) = 1$,  $g^\prime(0) = 0$, and $g^{\prime\prime}(0) = 1$,
its Taylor expansion about $0$ is 
\[ 
	g(\phi) 
	= 1 + \tfrac{1}{2}\phi^2 + \sum_{n = 3}^\infty \frac{g^{(n)}(0)}{n!} \phi^n.
\]
Replacing the secant terms in both the numerator and denominator of the limit above
with their Taylor expansions,
we obtain the equivalent limit yielding our result:
\[
	\lim_{\phi \to 0} 
	\frac{\tfrac{1}{2}(\tfrac{1}{2}\mu\phi)^2 + \bigO(\phi^3)}{\tfrac{1}{2}\phi^2 + \bigO(\phi^3)}
	= \tfrac{1}{4}\mu^2.
\]
\end{proof}

As \cref{ex:circ} can model any $\mu \in (0,1]$, 
per \cref{key:why}, \cref{thm:uhlig_angle,thm:uhlig_ub} also accurately describe 
the local behavior of Uhlig's cutting procedure at outermost points in $\fovA$ 
for any matrix $A$.  
Moreover, due to the squaring and one-quarter factor in \cref{thm:uhlig_ub}, 
the linear rate of convergence becomes very fast rather rapidly 
as~$\mu$ decreases from one,
ultimately becoming superlinear if the outermost point is a corner ($\mu = 0$).
We can also estimate the cost of approximating 
 $\bfovA$ about~$b_\star$, determining how many iterations will be needed
until it is no longer necessary to refine corner $c_k$, \ie
the value of $k$ such that $|c_k| \leq \numr \cdot ( 1 + \tau_\mathrm{tol} )$.
For simplicity, it will now be more convenient to assume that~$j=0$
with $|c_0| = \beta\numr$ for some scalar~$\beta > (1 + \tau_\mathrm{tol})$.
Via the Q-linear rate given by \cref{thm:uhlig_ub}, we have that 
\[
	|c_k| - \numr \leq (|c_0| - \numr) \cdot \left(\tfrac{1}{4}\mu^2\right)^k,
\]
and so if 
\[
	\numr + (|c_0| - \numr) \cdot \left(\tfrac{1}{4}\mu^2\right)^k \leq  \numr \cdot ( 1 + \tau_\mathrm{tol} ),
\]
then it follows that $|c_k| \leq \numr \cdot ( 1 + \tau_\mathrm{tol} )$, \ie it does not need to be refined further.
By first dividing the above equation by $\numr$ and doing some simple manipulations,
we have that $|c_k|$ is indeed sufficiently close to $\numr$~if
\beq
	k \geq \frac{\log (\tau_\mathrm{tol}) - \log (\beta - 1) }{\log \left(\tfrac{1}{4}\mu^2\right)}.
\eeq
Using \cref{ex:circ} with $\beta = 100$, and $\tau_\mathrm{tol} = \texttt{1e-14}$, only
$k \approx 27$, $14$, $7$, and $4$ iterations are needed, respectively, 
for $\mu = 1$, $0.5$, $0.1$, and $0.01$.
This is indeed rather fast for linear convergence.
Of course, if $\fovA$ has more than one outermost point, the total cost of a cutting-plane method
increases commensurately, since $\bfovA$ must be well approximated about all of these outermost points.
For disk matrices, all boundary points are outermost, and so the cost blows up, per \cref{thm:uhlig_disk}.

\section{An improved cutting-plane algorithm}
\label{sec:alg2}
We now address some inefficiencies in Uhlig's method
by giving an improved cutting-plane method.
The two main components of this refined algorithm are as follows.
First, any of the local optimization techniques from \cref{sec:alg1}
also allows us to more efficiently locate outermost points in~$\fovA$.
This is possible because each outermost point is bracketed on~$\bfovA$ by 
two boundary points of $\fovA$ in $\mathcal{Z}_j$,
and these brackets improve as $\mathcal{G}_j$ more accurately approximates $\fovA$.
Therefore, once $\mathcal{G}_j$ is no longer a crude approximation, 
these brackets can be used to initialize optimization to find global maximizers of~$h(\theta)$, 
and thus, globally outermost points of $\fovA$.
Second, given a boundary point of $\fovA$ that is also known to be locally outermost,
we use a new cutting procedure
that reduces the total number of cuts needed to sufficiently approximate $\bfovA$
in this region.
When this new cut cannot be invoked, we will fall back on Uhlig's cutting procedure.
In the next three subsections, we describe our new cutting strategy, establish a Q-linear rate of convergence for it, 
and finally, show how these cuts can be sufficiently well estimated so that our theoretical convergence rate result
is indeed realized in practice.  Finally, pseudocode of our completed algorithm is given in~\cref{alg2}.

\subsection{An optimal-cut strategy}
\label{sec:opt_cut}
Again consider \cref{ex:circ}.
In \cref{fig:uhlig_jp1}, Uhlig's cut of corner $c_j$ between $b_j$ (with $|b_j| < r$) and $b_\star$ produces
two new corners~$\hat c_{j+1}$ and $c_{j+1}$, but since $|\hat c_{j+1}| < r$ and $|c_{j+1}| > r$,
it is only necessary to subsequently refine $c_{j+1}$.  
However, in \cref{fig:uhlig_two} we show another scenario where both of the two new corners
produced by Uhlig's cut will require subsequent cutting as well.  
While \cref{thm:uhlig_angle,thm:uhlig_ub} indicate
the number of iterations Uhlig's method needs to sufficiently refine the sequence $\{c_k\}$,
they do not take into account that the CCW corners that are generated may also need to be cut.
Thus, the total number of eigenvalue computations with $H(\theta)$ 
can be higher than what is suggested by these two theorems.
However, comparing \cref{fig:uhlig_jp1,fig:uhlig_two} immediately 
suggests a better strategy, namely, to make the largest reduction in the angle $\phi_j$
such that the CCW corner~$\hat c_{j+1}$ (between $b_j$ and $c_j$ on the tangent line for $b_j$) 
does not subsequently need to be refined, \ie such that $|\hat c_{j+1}| = r$. 
In \cref{fig:uhlig_two}, this ideal corner is labeled $d_j$, while the corresponding optimal cut for this same example
is shown in \cref{fig:opt_cut}, where $d_j$ coincides with $\hat c_{j+1}$, and so the latter is not labeled.

\subsection{Convergence analysis of the optimal cut}
\label{sec:opt_conv}
Before describing how to compute optimal cuts,
we  derive the convergence rate of the sequence of angles~$\{\phi_k\}$
this strategy produces.
Per \cref{key:why}, it again suffices to study \cref{ex:circ}.

\begin{figure}[t]
\centering
\subfloat[Uhlig's cut.]{
\resizebox*{6.0cm}{!}{\includegraphics[trim=1.3cm 0cm 1.3cm 0cm,clip]{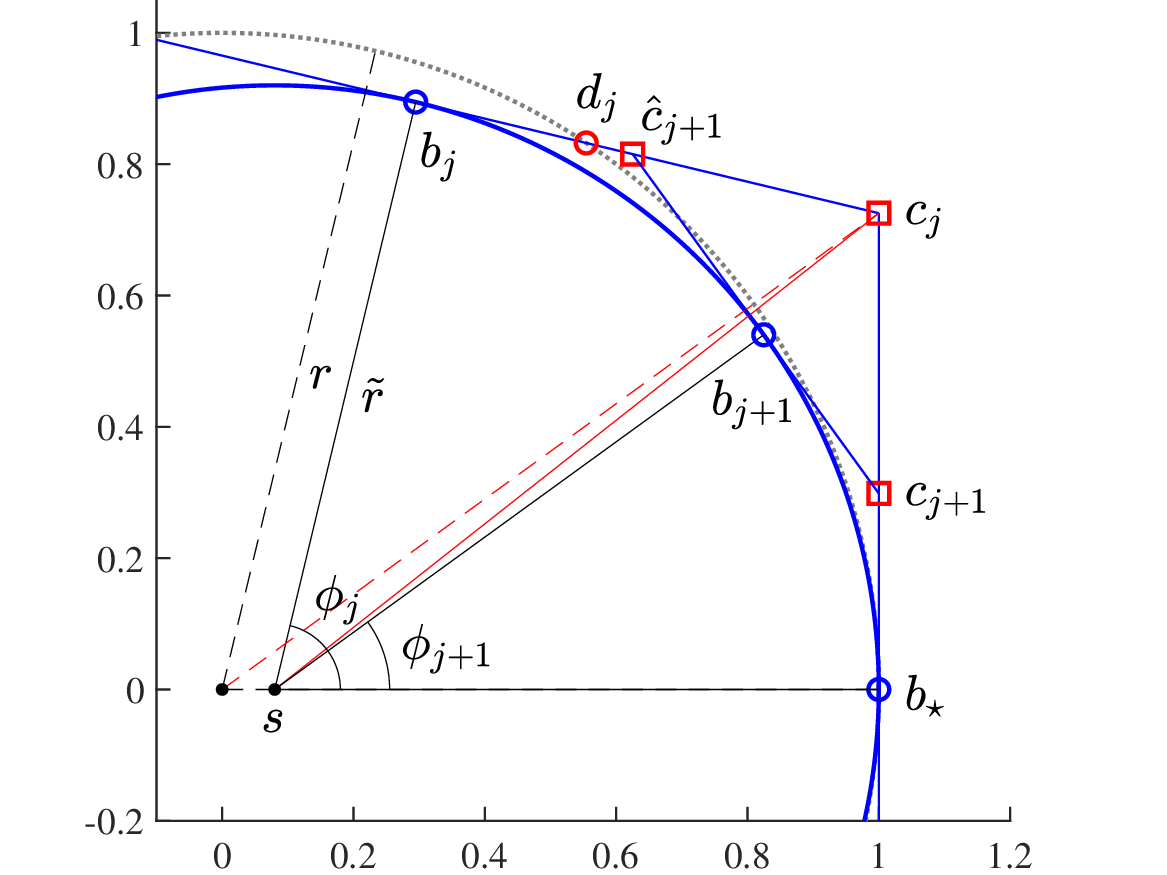}}
\label{fig:uhlig_two}
}
\subfloat[The optimal cut.]{
\resizebox*{6.0cm}{!}{\includegraphics[trim=1.3cm 0cm 1.3cm 0cm,clip]{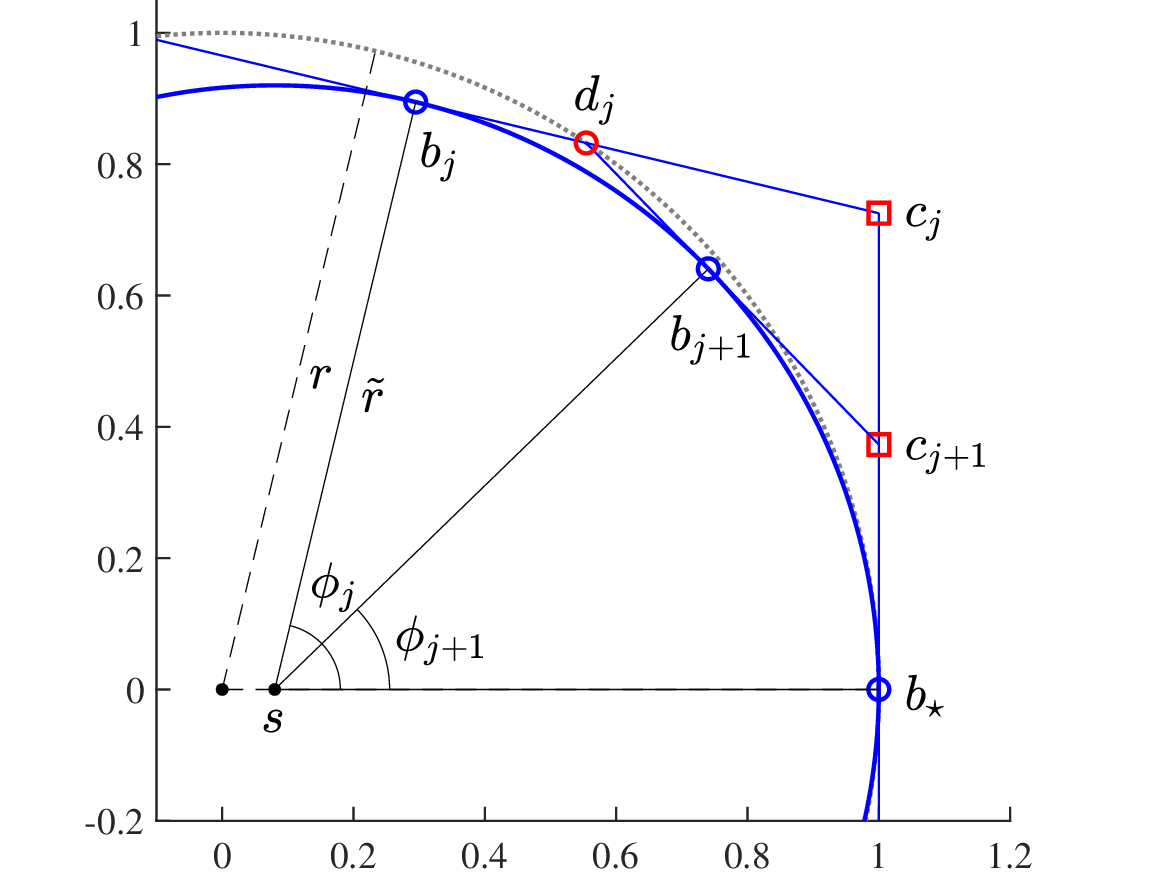}}
\label{fig:opt_cut}
}
\caption{Depictions of corner $c_j$ between boundary points $b_j$ and $b_\star$
being cut by Uhlig's cutting procedure (left) and the optimal cut (right),
where the latter always makes the largest possible reduction in $\phi_j$ such that
only corner $c_{j+1}$ must be refined.
}
 \label{fig:cuts_comp}
\end{figure}

\begin{lemma}
\label{lem:dj}
Given \cref{ex:circ},
additionally assume 
that $|b_j| < r$ and $\tilde r < r$, and so at $b_\star$, 
$\mu \in (0,1)$.
Then the point on~$L_{\theta_j}$,  \ie
the supporting hyperplane passing through $b_j$ where $\theta_j = -\phi_j \in (-\pi,0)$,
that is closest to $c_j$ and has modulus $r$ is
\begin{subequations}
	\begin{align}
	\label{eq:dlem}
	d_j &= b_j - \imagunit t_j \eix{\phi_j}, \quad \text{where} \\
	\label{eq:tlem}
	t_j &= -\cm \sin \phi_j + \sqrt{\cm^2 \sin^2 \phi_j + 2\cm \tilde r(1 - \cos \phi_j)} > 0.
	\end{align}
\end{subequations}
\end{lemma}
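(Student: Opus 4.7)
The plan is to parametrize the tangent line $L_{\theta_j}$ at~$b_j$, impose that a point on it has modulus~$r$, and then select the root of the resulting quadratic that corresponds to the intersection with the circle $\{|z|=r\}$ on the $c_j$-ward side of $b_j$. Since $\fovX{M}$ is a disk of radius $\tilde r$ centered at~$\cm$, the tangent~$L_{\theta_j}$ at $b_j = \cm + \tilde r \eix{\phi_j}$ is perpendicular to the radius vector $\eix{\phi_j}$, and the sign convention in~\eqref{eq:dlem} suggests parametrizing $L_{\theta_j}$ as $\{ b_j - \imagunit t \eix{\phi_j} : t \in \R \}$.

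Before solving the quadratic, I would first pin down which sign of the parameter points toward $c_j$. Since $L_{\theta_\star}$ is the vertical line $\Re z = r$, solving $\Re(b_j - \imagunit t \eix{\phi_j}) = r$ yields the positive value $t_c = \tilde r (1 - \cos\phi_j)/\sin\phi_j$ for $\phi_j \in (0,\pi)$, so positive $t$ is indeed the $c_j$-ward direction. Next, expanding $|b_j - \imagunit t \eix{\phi_j}|^2$ is a routine computation in~$\C$; using $|b_j|^2 = \cm^2 + 2\cm\tilde r \cos\phi_j + \tilde r^2$ and setting the result equal to $r^2 = (\cm + \tilde r)^2$ simplifies to the quadratic $t^2 + 2\cm t \sin\phi_j - 2\cm\tilde r (1 - \cos\phi_j) = 0$. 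Its roots, $-\cm\sin\phi_j \pm \sqrt{\cm^2\sin^2\phi_j + 2\cm\tilde r(1-\cos\phi_j)}$, are the two intersections of $L_{\theta_j}$ with $\{|z|=r\}$; because the hypothesis $|b_j|<r$ places $b_j$ strictly inside the circle, these intersections lie on opposite sides of $b_j$ along $L_{\theta_j}$, so exactly one root is positive. That positive root yields the formulas for $t_j$ and $d_j$ stated in \eqref{eq:tlem} and~\eqref{eq:dlem}.

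To finish, I would verify strict positivity of~$t_j$: the hypothesis $\tilde r < r$ together with $r = \cm + \tilde r$ forces $\cm > 0$, while $\phi_j \in (0,\pi)$ gives $1 - \cos\phi_j > 0$, so the discriminant strictly exceeds $\cm^2 \sin^2 \phi_j$ and the positive square root dominates $\cm \sin \phi_j$. No step in this plan is hard; the only real points of care are the orientation of the tangent parametrization and the selection of the correct root, and both are settled by the preliminary check that $c_j$ itself corresponds to a positive value of the parameter.
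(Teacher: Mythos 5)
Your proposal is correct and follows essentially the same route as the paper: parametrize $L_{\theta_j}$ as $b_j - \imagunit t\eix{\phi_j}$, impose $|d_j|^2 = r^2$ to get the quadratic $t^2 + 2\cm t\sin\phi_j - 2\cm\tilde r(1-\cos\phi_j)=0$, and take the positive root. Your preliminary orientation check (that positive $t$ points toward $c_j$) is a small addition the paper leaves implicit, but otherwise the arguments coincide.
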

\begin{proof}
As  $\phi_j \in (0,\pi)$, 
clearly \eqref{eq:dlem} must hold for some~$t_j > 0$.
To obtain~\eqref{eq:tlem}, we use the fact that $|d_j|^2 = r^2$ and solve for $t_j$ using \eqref{eq:dlem}.
Setting~$u = \eix{\phi_j}$, we have
\begin{align*}
	0 &= |d_j|^2 - r^2 = (b_j - \imagunit t_j u) \big(\overline b_j + \imagunit t_j \overline{u}) - r^2 
	= t_j^2 + \imagunit (b_j \overline{u} - \overline b_j u)t_j + |b_j|^2  - r^2.
\end{align*}
which by substituting in the following two equivalences
\begin{align*}
	b_j \overline{u} - \overline b_j u
	&= (\cm + \tilde r  u) \overline{u} - (\cm + \tilde r  \overline{u}) u
	= \cm (\overline{u} - u) = -\imagunit 2\cm \sin \phi_j, \\
	|b_j|^2 &= \cm^2 + \tilde r^2 + \cm \tilde r(u + \overline{u}) = \cm^2 + \tilde r^2 + 2\cm \tilde r \cos \phi_j,
\end{align*}
yields
\[
	0 = t_j^2 + (2 \cm \sin \phi_j) t_j + (\cm^2 + \tilde r^2 - r^2 + 2\cm\tilde r \cos \phi_j).
\]
By substituting in $r^2 = (\cm + \tilde r)^2 = \cm^2 + \tilde r^2 + 2\cm\tilde r$, this simplifies further to
\[
	0 = t_j^2 + (2 \cm \sin \phi_j) t_j + 2\cm \tilde r (\cos \phi_j - 1).
\]
Thus, by the quadratic formula, we obtain \eqref{eq:tlem}.
\end{proof}

\begin{lemma}
\label{lem:opt_angle}
Given the assumptions in \cref{lem:dj} and  $t_j > 0$ from \eqref{eq:tlem}, 
also suppose that \mbox{$\phi_j \in (0,\tfrac{\pi}{2})$}.
Then if optimal cuts are sequentially applied to the corners~$\{c_k\}$ described in \cref{ex:circ},
for all $k \geq j$, 
\beq
	\label{eq:opt_thetajp1}
	\phi_{k+1} 
	= -\phi_k + 2\arctan \left( \frac{\tilde r \sin \phi_k - t_k \cos \phi_k}{\tilde r \cos \phi_k + t_k\sin\phi_k} \right).
\eeq
\end{lemma}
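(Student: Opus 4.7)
The plan is to exploit the geometry of tangent lines from an external point to the circular boundary $\bfovX{M}$. From \cref{lem:dj} I have the product form $d_j - \cm = (\tilde r - \imagunit t_j)\eix{\phi_j}$, which immediately gives $|d_j - \cm|^2 = \tilde r^2 + t_j^2 > \tilde r^2$, so $d_j$ lies strictly outside the disk $\fovX{M}$, and it puts $\Arg(d_j - \cm)$ in easy closed form. By the definition of the optimal cut, the new CCW corner $\hat c_{j+1}$ must coincide with $d_j$; equivalently, the new supporting hyperplane $L_{\theta_{j+1}}$ passes through $d_j$ and is tangent to $\bfovX{M}$ at the new boundary point $b_{j+1}$.

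From the external point $d_j$ there are exactly two tangent lines to $\bfovX{M}$: one is $L_{\theta_j}$ itself (touching at $b_j$), and the other is the required $L_{\theta_{j+1}}$ (touching at $b_{j+1}$). I will invoke the standard symmetry property that the two tangent points from a common external point are reflections of one another across the line from that external point to the center of the circle. Consequently, the arguments of $b_j - \cm$ and $b_{j+1} - \cm$ are symmetric about $\Arg(d_j - \cm)$, giving the identity
\[
	\phi_{j+1} = 2\Arg(d_j - \cm) - \phi_j.
\]
To finish, I expand $(\tilde r - \imagunit t_j)\eix{\phi_j}$ into real part $\tilde r \cos\phi_j + t_j \sin\phi_j$ (positive under the hypothesis $\phi_j \in (0,\tfrac{\pi}{2})$, since $\tilde r, t_j > 0$) and imaginary part $\tilde r \sin\phi_j - t_j \cos\phi_j$, so that
\[
	\Arg(d_j - \cm) = \arctan \left( \frac{\tilde r \sin\phi_j - t_j \cos\phi_j}{\tilde r \cos\phi_j + t_j \sin\phi_j}\right),
\]
and substituting into the bisection identity yields \eqref{eq:opt_thetajp1}.

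To justify the ``for all $k \geq j$'' part of the conclusion, I then show inductively that a single optimal cut preserves the hypotheses of \cref{lem:dj} and of the current lemma, i.e., that $\phi_{j+1} \in (0, \phi_j) \subset (0,\tfrac{\pi}{2})$. An equivalent form of the identity is $\phi_{j+1} = \phi_j - 2\arctan(t_j/\tilde r)$, so $\phi_{j+1} < \phi_j$ is immediate from $t_j > 0$, while $\phi_{j+1} > 0$ reduces to the inequality $t_j/\tilde r < \tan(\phi_j/2)$, which I expect to follow from a half-angle substitution in the quadratic $t_j^2 + 2\cm \sin\phi_j \cdot t_j - 2\cm\tilde r(1 - \cos\phi_j) = 0$ defining $t_j$. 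The main subtlety is making sure that the ``other'' tangent from $d_j$, rather than $L_{\theta_j}$ itself, is the one lying between $b_\star$ and $b_j$; the bisection property handles this cleanly because it forces the two tangent points to have arguments $\Arg(d_j - \cm) \pm \arctan(t_j/\tilde r)$, which pick out $\phi_j$ and $\phi_{j+1}$ respectively, with $\phi_{j+1}$ in the desired interval.
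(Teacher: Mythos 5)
Your proposal is correct and matches the paper's argument: both rest on the tangent-bisection symmetry $\phi_{j+1} = 2\Arg(d_j - \cm) - \phi_j$ from the external point $d_j$, followed by reading off $\Re(d_j-\cm)$ and $\Im(d_j-\cm)$ from $d_j - \cm = (\tilde r - \imagunit t_j)\eix{\phi_j}$ and using positivity of the real part to write the argument as an arctangent. Your added inductive check that $\phi_{j+1}\in(0,\phi_j)$ (via $t_j < \tilde r\tan\tfrac{1}{2}\phi_j$, which does follow from evaluating the defining quadratic at $\tilde r\tan\tfrac{1}{2}\phi_j$) is a sound supplement that the paper leaves implicit until \cref{lem:opt_zero}.
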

\begin{proof}
Let $\hat \phi = \phi_k - \Arg(d_k - \cm)$.  Then it follows that 
\begin{equation}
	\label{eq:theta_jp1_proof}
	\phi_{k+1} = \phi_k - 2\hat \phi 
	= -\phi_k + 2\Arg(d_k - \cm)  
	= - \phi_k + 2\arctan \left( \frac{\Im (d_k - \cm)}{\Re (d_k - \cm)} \right),
\end{equation}
where the last equality follows because $\Re (d_k - \cm) > 0$, as $\Re b_k > \cm$.
Using \eqref{eq:dlem} and substituting in $b_k = \cm + \tilde r\e^{\imagunit \phi_k}$, 
we have that 
\[
	d_k - \cm = \tilde r \e^{\imagunit \phi_k} - \imagunit t_k \e^{\imagunit \phi_k}
	\qquad \Longleftrightarrow \qquad
	\begin{aligned}
	\Re (d_k - \cm) &= \tilde r \cos \phi_k + t_k \sin \phi_k,\\
	\Im (d_k - \cm) &= \tilde r \sin \phi_k - t_k \cos \phi_k.
	\end{aligned}
\]
Substituting these into~\eqref{eq:theta_jp1_proof} completes the proof.
\end{proof}

Before deriving how fast $\{\phi_k\}$ converges, we show that it indeed converges to zero.

\begin{lemma}
\label{lem:opt_zero}
Given the assumptions of \cref{lem:opt_angle},
the recursion \eqref{eq:opt_thetajp1} for
optimal cuts produces a sequence of angles $\{\phi_k\}$ converging to zero.
\end{lemma}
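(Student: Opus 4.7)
The plan is to establish strict monotonicity $0 < \phi_{k+1} < \phi_k$ for all $k \geq j$, so that $\{\phi_k\}$ is a bounded decreasing sequence, and then rule out a positive limit via a continuity argument on the recursion.

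First I would simplify recursion~\eqref{eq:opt_thetajp1}. Since
\[
	\tilde r \cos\phi_j + t_j \sin\phi_j + \imagunit(\tilde r \sin\phi_j - t_j \cos\phi_j) = (\tilde r - \imagunit t_j)(\cos\phi_j + \imagunit \sin\phi_j),
\]
and $\tilde r, t_j > 0$, the argument of this complex number equals $\phi_j - \arctan(t_j/\tilde r)$, so~\eqref{eq:opt_thetajp1} collapses to $\phi_{j+1} = \phi_j - 2\arctan(t_j/\tilde r)$. Geometrically this reflects that two tangent lines to the disk at angles $\phi_j$ and $\phi_{j+1}$ meet at a point whose argument from $\cm$ is $(\phi_j + \phi_{j+1})/2$. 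From this form, $\phi_{j+1} < \phi_j$ is immediate, as $t_j > 0$ by \cref{lem:dj}.

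The key step is showing $\phi_{j+1} > 0$, equivalently $t_j < \tilde r \tan(\tfrac{1}{2}\phi_j)$. Geometrically this just says $d_j$ lies strictly between $b_j$ and $c_j$ on $L_{\theta_j}$, which is intuitive because the modulus along that segment runs continuously from $|b_j| < r$ at $b_j$ to $|c_j| > r$ at $c_j$. I would verify it analytically by evaluating the quadratic $q(t) = t^2 + 2\cm \sin\phi_j \cdot t - 2\cm \tilde r (1 - \cos\phi_j)$ (whose unique positive root is $t_j$, per the proof of \cref{lem:dj}) at $t = \tilde r \tan(\tfrac{1}{2}\phi_j)$: using $\sin\phi_j = 2\sin(\tfrac{1}{2}\phi_j)\cos(\tfrac{1}{2}\phi_j)$ and $1 - \cos\phi_j = 2\sin^2(\tfrac{1}{2}\phi_j)$, the middle two terms cancel and one obtains $q(\tilde r \tan(\tfrac{1}{2}\phi_j)) = \tilde r^2 \tan^2(\tfrac{1}{2}\phi_j) > 0$. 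Since $q$ has positive leading coefficient and $t_j > 0$ is its positive root, this forces $t_j < \tilde r \tan(\tfrac{1}{2}\phi_j)$, hence $\phi_{j+1} > 0$. Moreover, since $\phi_{j+1} \in (0, \phi_j) \subset (0, \tfrac{\pi}{2})$, the hypotheses of \cref{lem:opt_angle} propagate to index $j+1$, and the recursion may be iterated indefinitely.

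The monotone sequence $\{\phi_k\}$ therefore converges to some $\phi_\infty \in [0, \phi_j)$. To finish, I would view the recursion as $\phi_{k+1} = F(\phi_k)$ for the continuous map $F(\phi) = \phi - 2\arctan(t(\phi)/\tilde r)$, where $t(\phi) = -\cm \sin\phi + \sqrt{\cm^2 \sin^2\phi + 2\cm \tilde r(1-\cos\phi)}$, defined on $[0, \tfrac{\pi}{2})$. Passing to the limit yields $F(\phi_\infty) = \phi_\infty$, equivalently $t(\phi_\infty) = 0$. But for any $\phi \in (0,\pi)$ the radicand strictly exceeds $\cm^2 \sin^2\phi$, since $2\cm \tilde r (1-\cos\phi) > 0$ (note $\cm > 0$ as a consequence of $|b_j| < r$), so $t(\phi) > 0$. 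This contradiction forces $\phi_\infty = 0$. The only non-routine ingredient is the strict inequality $t_j < \tilde r \tan(\tfrac{1}{2}\phi_j)$; the remainder is standard monotone convergence combined with a single continuous fixed-point argument.
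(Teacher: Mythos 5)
Your proof is correct and follows essentially the same route as the paper: monotone bounded convergence to a limit $l$, then a fixed-point contradiction showing $t(l)=0$ would have to hold, which is impossible since $t(\phi)>0$ on $(0,\tfrac{\pi}{2})$. The only difference is that the paper asserts monotonicity and the lower bound of zero ``by construction,'' whereas you derive both analytically from the clean reformulation $\phi_{k+1}=\phi_k-2\arctan(t_k/\tilde r)$ and the quadratic-root comparison $t_k<\tilde r\tan(\tfrac12\phi_k)$ --- a worthwhile extra level of rigor, but not a different argument.
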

\begin{proof}
By construction, $\{\phi_k\}$ is monotone, \ie $\phi_{k+1} < \phi_k$ for all $k \geq j$,
and bounded below by zero, and so $\{\phi_k\}$ converges to some limit $l$.
Per \eqref{eq:tlem}, the~$t_k$ values appearing in \eqref{eq:opt_thetajp1} depend on $\phi_k$,
so we define the analogous continuous function
\beq
	\label{eq:t_cont}
	t(\phi) = -\cm \sin \phi + \sqrt{\cm^2 \sin^2 \phi + 2\cm \tilde r (1 - \cos \phi )}.
\eeq
Now by way of contradiction, assume that $l > 0$ and so $0 < l < \phi_j < \tfrac{\pi}{2}$.
Thus,
\[
	\lim_{k\to\infty} \phi_{k+1} = l = -l + 2\arctan \left( \frac{\tilde r \sin l - t(l) \cos l}{\tilde r \cos l + t(l) \sin l} \right)
	\ \Leftrightarrow \
	\tan l = \frac{\tilde r \sin l - t(l) \cos l}{\tilde r \cos l + t(l) \sin l}.
\]
Then, by multiplying both sides by $\tilde r \cos l + t(l) \sin l$ and rearranging terms,
we obtain the equality \mbox{$t(l)(\sin^2 l + \cos^2 l) = 0$}, and so $t(l) = 0$.
However, \cref{lem:dj} states that $t(l) > 0$ should hold since $l \in (0,\tfrac{\pi}{2})$,
a contradiction, and so $l=0$.
\end{proof}

We now have the necessary pieces to derive the exact rate of convergence of 
the angles produced by optimal cuts.

\begin{theorem}
\label{thm:opt_conv}
The sequence $\{\phi_k\}$ produced by optimal cuts 
and described by recursion \eqref{eq:opt_thetajp1} 
converges to zero Q-linearly with rate $\tfrac{2(1 - \sqrt{1 - \mu})}{\mu} - 1$.
\end{theorem}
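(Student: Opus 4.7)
The plan is to compute $\lim_{k\to\infty} \phi_{k+1}/\phi_k$ directly from the recursion \eqref{eq:opt_thetajp1}, following the same template as the proof of \cref{thm:uhlig_angle}. Since $\phi_k \to 0$ monotonically by \cref{lem:opt_zero} and the right-hand side of \eqref{eq:opt_thetajp1} depends on $\phi_k$ through smooth expressions, it suffices to evaluate the continuous limit
\[
L \coloneqq \lim_{\phi \to 0^+} \frac{1}{\phi}\left[ -\phi + 2 \arctan\!\left( \frac{\tilde r \sin\phi - t(\phi)\cos\phi}{\tilde r \cos\phi + t(\phi)\sin\phi} \right)\right],
\]
with $t(\phi)$ given by \eqref{eq:t_cont}. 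This limit will be $L = \phi_{k+1}/\phi_k$'s asymptotic value, which is the sought Q-linear rate.

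The first and most delicate step is to pin down the leading-order behavior of $t(\phi)$ near zero. Using $\sin^2\phi = \phi^2 + O(\phi^4)$ and $1 - \cos\phi = \tfrac{1}{2}\phi^2 + O(\phi^4)$ inside the square root in \eqref{eq:t_cont} gives
\[
t(\phi) = -s\phi + \sqrt{s^2 \phi^2 + s\tilde r \phi^2} + o(\phi) = \bigl(\sqrt{s(s+\tilde r)} - s\bigr)\phi + o(\phi) = (\sqrt{sr} - s)\phi + o(\phi).
\]
This is the main obstacle: although both summands under the square root scale like $\phi^2$, the square root itself scales like $\phi$, so $t(\phi)$ is \emph{linear} in $\phi$, not quadratic. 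Missing this detail would (incorrectly) predict superlinear convergence.

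Given this, the second step is a straightforward substitution. The numerator of the arctangent becomes $(\tilde r - (\sqrt{sr} - s))\phi + o(\phi)$, the denominator tends to $\tilde r$, and $\arctan x = x + O(x^3)$ then yields
\[
L = -1 + 2\left(1 - \frac{\sqrt{sr} - s}{\tilde r}\right) = 1 - 2 \cdot \frac{\sqrt{sr} - s}{\tilde r}.
\]
The final step is algebraic: writing $x = s/r = 1-\mu$ and $\tilde r/r = 1-x$, I would simplify
\[
\frac{\sqrt{sr} - s}{\tilde r} = \frac{\sqrt{x} - x}{1-x} = \frac{\sqrt{x}(1-\sqrt{x})}{(1-\sqrt{x})(1+\sqrt{x})} = \frac{\sqrt{1-\mu}}{1+\sqrt{1-\mu}},
\]
so that $L = (1-\sqrt{1-\mu})/(1+\sqrt{1-\mu})$. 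Using the factorization $\mu = (1-\sqrt{1-\mu})(1+\sqrt{1-\mu})$ gives $\tfrac{2}{1+\sqrt{1-\mu}} = \tfrac{2(1-\sqrt{1-\mu})}{\mu}$, whence $L = \tfrac{2(1-\sqrt{1-\mu})}{\mu} - 1$, which is the claimed rate. Positivity and strict sub-unit size of $L$ for $\mu \in (0,1)$ follow easily from this closed form, confirming that the Q-linear convergence is genuine.
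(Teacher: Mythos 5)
Your proposal is correct and follows essentially the same route as the paper's proof: pass to the continuous limit, extract the leading-order linear behavior $t(\phi) = (\sqrt{\cm r}-\cm)\phi + o(\phi)$ from \eqref{eq:t_cont} (the paper does this via small-angle approximations), substitute, and simplify using $\tilde r = \mu r$ and $\cm = (1-\mu)r$. Your closed form $\tfrac{1-\sqrt{1-\mu}}{1+\sqrt{1-\mu}}$ is algebraically identical to the stated rate, so the only difference is cosmetic.
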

\begin{proof}
By \cref{lem:opt_angle,lem:opt_zero}, 
\eqref{eq:opt_thetajp1} holds, $\phi_k \to \phi_\star = 0$, and $\phi_k \geq 0$ for all~$k \geq j$, so
\[
	\lim_{k \to \infty} \frac{|\phi_{k+1} - \phi_\star|}{| \phi_{k} - \phi_\star |}  
	= \lim_{k \to \infty} \frac{\phi_{k+1}}{\phi_{k}} 
	= \lim_{k \to \infty} \frac{ 
		-\phi_k + 2\arctan \left( \frac{\tilde r \sin \phi_k - t_k \cos \phi_k}{\tilde r \cos \phi_k + t_k \sin\phi_k} \right)
	}{\phi_k}.
\]
Using the continuous version of $t_k$ given in \eqref{eq:t_cont},
we instead consider the entire limit in continuous form:
\beq
	\label{eq:opt_lim_cont}
	\lim_{\phi \to 0} \frac{ 
		-\phi + 2\arctan \left( \frac{\tilde r \sin \phi - t(\phi) \cos \phi}{\tilde r \cos \phi + t(\phi) \sin\phi} \right)
	}{\phi}
	= -1 + \lim_{\phi \to 0}
		\frac{2}{\phi} \cdot
		\frac{ \tilde r \phi - t(\phi)\cos \phi}{\tilde r \cos\phi + t(\phi) \phi},
\eeq
where the equality holds by using the small-angle approximations $\arctan x \approx x$ 
(as the ratio inside the $\arctan$ above goes to zero as $\phi \to 0$)
and $\sin x \approx x$.
Again using 
$\sin x \approx x$ as well as the small-angle approximation $1 - \cos x \approx \tfrac{1}{2}x^2$,
we also have the small-angle approximation 
\beq
	\label{eq:t_approx}
	t(\phi) 
	\approx -\cm \phi + \sqrt{\cm^2 \phi^2 + 2\cm\tilde r \big(\tfrac{1}{2}\phi^2\big)} 
	= -\phi \left(\cm - \phi \sqrt{\cm^2 + \cm\tilde r}\right) 
	= -\phi \left(\cm - \sqrt{\cm r} \right),
\eeq
where the last equality holds since $\tilde r = r - \cm$.
Via substituting in \eqref{eq:t_approx}, the limit on the right-hand side of \eqref{eq:opt_lim_cont} is
\[
	\lim_{\phi \to 0}
		\frac{2}{\phi}\cdot
		\frac{\tilde r \phi + \phi \left(\cm - \sqrt{\cm r} \right) \cos \phi
		}{\tilde r \cos \phi - \phi \left(\cm - \sqrt{\cm r} \right) \phi } 
	= \lim_{\phi \to 0}
		\frac{2 \left( \tilde r  + \left(\cm - \sqrt{\cm r} \right) \cos \phi \right)
		}{\tilde r \cos \phi - \phi^2 \left(\cm - \sqrt{\cm r} \right) } \\	
	= \frac{
			2 \left( \tilde r  + \cm - \sqrt{\cm r} \right)
		}{\tilde r}.
\]
Recalling that $\tilde r = \mu r$ and that $\cm = r - \tilde r = r - \mu r$,
by substitutions 
we can rewrite the ratio above as
\[
	\frac{2\left(\mu r + (r - \mu r) - \sqrt{(r - \mu r) r} \right) }{\mu r}
	= \frac{2\left(r - r \sqrt{1 - \mu} \right) }{\mu r}
	= \frac{2\left(1 - \sqrt{1 - \mu} \right) }{\mu}.
\]
Subtracting one from the value above completes the proof.
\end{proof}

As we show momentarily, optimal cuts have a total lower cost than Uhlig's cutting procedure.
Thus, there is no need to derive an analogue of \cref{thm:uhlig_ub} for describing
 the convergence rate of  the moduli of corners~$c_k$ produced by the optimal-cut strategy.

\subsection{Computing the optimal cut}
\label{sec:compute_opt}
Suppose that $b_\star \in \mathcal{Z}_j$ attains the value of~$l_j$,
and that $b_\star$ is also locally outermost in~$\fovA$,
and let \mbox{$\gamma = |b_\star| \leq \numr$}.
Without loss of generality, we assume that $\Arg(b_\star) = 0$,
and let $b_j \in \mathcal{Z}_j$ be the next known boundary point of $\fovA$ with $\Arg(b_j) \in (0,\pi)$.
We can model $\bfovA$ between~$b_\star$ and~$b_j$ by fitting a quadratic 
that interpolates $\bfovA$ at~$b_\star$ and~$b_j$.
If this model is a good fit, then it can be used to estimate~$d_j$,
and thus, also the optimal cut. 

Since $b_\star$ is also a locally outermost point of $\fovA$ and $\Arg(b_\star) = 0$, 
we can interpolate these boundary points using the 
sideways quadratic (opening up to the left in the complex plane)
\[
	q(y) = q_2 y^2 + q_1 y + q_0,
\]
with the remaining degree of freedom used to specify 
that $q(y)$ should be tangent to~$\fovA$ at $b_\star$.
Clearly, $q(y)$ cannot be a good fit if $L_{\theta_j}$,
the supporting hyperplane passing through $b_j$, is increasing from left to right in the complex plane;
hence, we also assume that \mbox{$\theta_j \in (-\tfrac{\pi}{2},0)$}.
 Let~\mbox{$\theta_\dagger \in (\theta_j,0)$} denote the angle of the supporting hyperplane
 for the optimal cut, \eg for \cref{ex:circ}, the one that passes through $d_j$ and the boundary point $b_{j+1}$ 
between $b_j$ and $b_\star$.
By our criteria, the equations
\beq
	q(0) = \gamma, 
	\quad
	q(\Im b_j) = \Re b_j, 
	\quad \text{and} \quad
	q^\prime(0) = 0
\eeq
determine the coefficients $q_0$, $q_1$, and $q_2$, and solving these yields 
\beq
	q_2 = \frac{ \Re b_j - \gamma}{(\Im b_j)^2},
	\quad
	q_1 = 0,
	\quad \text{and} \quad
	q_0 = \gamma.
\eeq

\begin{algfloat}[t]
\begin{algorithm}[H]
\floatname{algorithm}{Algorithm}
\caption{An Improved Cutting-Plane Algorithm}
\label{alg2}
\begin{algorithmic}[1]
	\REQUIRE{  
		$A \in \C^{n \times n}$ with $n \geq 2$ and $\tau_\mathrm{tol} > 0$.
	}
	\ENSURE{ 
		$l$ such that $|l -\numr| \leq \tau_\mathrm{tol} \cdot \numr$.
		\\ \quad
	}

	\STATE $\mathcal{G} \gets P_{\theta_1} \cap \cdots \cap P_{\theta_4}$ where $\theta_\ell = \tfrac{\ell -1 }{2}\pi$
		for $\ell = 1,2,3,4$
	\STATE $\mathcal{Z} \gets  \{ z_{\theta_\ell} :  \ell = 1,2,3,4\}$ 
	\STATE $l \gets \max \{ |b| : b \in \mathcal{Z} \}$, $u \gets  \max \{ |c| : c \text{ a corner of } \mathcal{G} \}$
	\WHILE {$\tfrac{ u - l }{l} > \tau_\mathrm{tol}$ }
		\STATE $L_{\theta_1} \gets$ supporting hyperplane 
			for the boundary point in $\mathcal{Z}$ attaining~$l$
		\STATE $\gamma \gets$ local max of $\rho(H(\theta))$ via optimization initialized at $\theta_1$
		\STATE $\mathcal{G} \gets \mathcal{G} \cap 
			P_{\theta_1} \cap \cdots \cap P_{\theta_q}$
			for the $q$ angles $\theta_\ell$ encountered during optimization
		\STATE $\mathcal{Z} \gets \mathcal{Z} \cup  \{ z_{\theta_\ell} : \ell = 1,\ldots,q\}$
		\STATE $c \gets \text{ outermost corner of } \mathcal{G}$
		\IF { the optimal cut should be applied to $c$ per \cref{sec:compute_opt} }
			\STATE $\theta \gets$ angle $\theta_\dagger$ is given by \eqref{eq:opt_cut_angle} 
					(rotated and flipped as necessary)
		\ELSE
			\STATE $\theta \gets -\Arg(c)$ \COMMENT{Uhlig's cut}
		\ENDIF 
		\STATE $\mathcal{G} \gets \mathcal{G} \cap P_\theta$
		\STATE $\mathcal{Z} \gets \mathcal{Z} \cup  \{ z_{\theta}  \}$ 
		\STATE $l \gets \max \{ |b| : b \in \mathcal{Z} \}$, $u \gets  \max \{ |c| : c \text{ a corner of } \mathcal{G} \}$
	\ENDWHILE
\end{algorithmic}
\end{algorithm}
\vspace{-0.4cm}
\algnote{
For simplicity, we forgo describing pseudocode to exploit possible normality of~$A$ or symmetry of $\fovA$,
and assume that $A\neq0$,  eigenvalues and local maximizers are obtained exactly, 
optimization is monotonic, \ie $l \leq \gamma$ is guaranteed, 
and there are no ties for the boundary point in line~5.  
}
\end{algfloat}

We can assess whether $q(y)$ is a good fit for $\bfovA$ about $b_\star$ 
by checking how close $q(y)$ is to being tangent to $\bfovA$ at $b_j$, 
\ie $q(y)$ is a good fit if 
\beq
	q^\prime(\Im b_j) \approx \tan \theta_j.
\eeq
If these two values are not sufficiently close, then we consider $q(y)$ a poor local approximation of $\bfovA$ at $b_j$ (and $b_\star$)
and use Uhlig's cutting procedure to update $\mathcal{G}_j$ and $\mathcal{Z}_j$.
Otherwise, we assume that $q(y)$ does accurately model $\bfovA$ in this region
and do an optimal cut.
To estimate $\theta_\dagger$, we need to determine the line 
\[
	a(y) = a_1 y + a_0
\]
such that $a(y)$ passes through $d_j$ for $y = \Im d_j$ and is tangent to $q(y)$
for some \mbox{$\tilde y \in (0,\Im d_j)$}.
Thus, we solve the following set of equations:
\bseq
\begin{alignat}{4}
	\label{eq:lin1}
	\Re d_j  &= a(\Im d_j)
	 	&& \qquad \Longleftrightarrow \qquad &
		\Re d_j &= a_1 \Im d_j + a_0,\\	
	\label{eq:lin2}
	q(\tilde y) &= a(\tilde y) 
		&& \qquad \Longleftrightarrow \qquad &
		q_2 \tilde y^2 + q_0 &= a_1 \tilde y + a_0, \\
	\label{eq:lin3}
	q^\prime(\tilde y) &= a^\prime(\tilde y) 
		&& \qquad \Longleftrightarrow \qquad &
		2q_2 \tilde y &= a_1,
\end{alignat}
\eseq
to determine $a_0$, $a_1$ and $\tilde y$.  This yields 
\beq
	\label{eq:lincoeffs}
	\tilde y = \Im d_j - \sqrt{(\Im d_j)^2 + \tfrac{q_0 - \Re d_j }{q_2}}, 
	\ \ \
	a_0 = -q_2 \tilde y^2 + q_0, 
	\ \ \  \text{and} \ \ \
	a_1 = \frac{\Re d_j - a_0}{\Im d_j},
\eeq
where $a_1$ follows directly from \eqref{eq:lin1}, $a_0$ is obtained by substituting 
the value of $a_1$ given in \eqref{eq:lin3} into~\eqref{eq:lin2}, and $\tilde y$ follows 
from substituting the value of $a_0$ given in~\eqref{eq:lincoeffs} into $a_1$ in~\eqref{eq:lincoeffs}
(so that $a_1$ now only has $\tilde y$ as an unknown),
and then substituting this version of $a_1$ into~\eqref{eq:lin3}, 
which results in a quadratic equation in~$\tilde y$.
Since $q(y)$ is a sufficiently accurate local model of~$\bfovA$,
it follows that 
\beq
	\label{eq:opt_cut_angle}
	\theta_\dagger \approx \arctan a_1.
\eeq

If $\gamma = \numr$, we can also estimate the value of $\mu$ at~$b_\star$ via
\beq
	\label{eq:mu_est}
	\mu_\mathrm{est} \coloneqq \frac{1}{2|q_2|\gamma},
\eeq
as the osculating circle of $q(y)$ at $y=0$ has radius $\tfrac{1}{2}|q_2|$.
While the value of $\mu$ at $b_\star$ might be computed using \cref{thm:curvature},
this would be much more expensive and it requires that $\lambda_\mathrm{max}(H(0))$ be simple,
which may not hold.
Detecting the normalized radius of curvature at outermost points via~\eqref{eq:mu_est}
will be a key component of our hybrid algorithm in \cref{sec:hybrid}.

Our formulas for computing $\theta_\dagger$ can be
used for any outermost point simply by rotating and flipping the problem
as necessary to satisfy the assumptions on~$b_\star$ and~$b_j$.
To be robust against even small rounding errors, 
instead of $d_j$, we use \mbox{$(1 - \delta) d_j + \delta b_j$} for some small $\delta \in (0,1)$,
\ie a point slightly closer to $b_j$.

For different values of $\mu \in [0,1)$,
\cref{fig:conv_rates} plots the convergence rates for $\{\phi_k\}$ given by \cref{thm:uhlig_angle,thm:opt_conv},
 while \cref{fig:total_cuts} shows the total number of cuts needed by 
each cutting strategy in order to sufficiently approximate~$\bfovX{M}$ near~$b_\star$.
Uhlig's method is usually slightly more expensive
but becomes significantly worse than optimal cutting 
for normalized curvatures $\mu \approx 0.84$ and higher, 
requiring about double the number of cuts at this transition point.
A variant of \cref{fig:total_cuts} (not shown) also reveals that optimal cuts become slightly more expensive
than Uhlig's cuts for~$\mu \approx 0.999961$ and~above, and so we only use the optimal cut
when $\mu_\mathrm{est}$ is less than this value.

\begin{figure}
\centering
\subfloat[Convergence rates of $\{\phi_k\}$.]{
\resizebox*{6.0cm}{!}{\includegraphics[trim=1.3cm 0cm 1.3cm 0cm,clip]{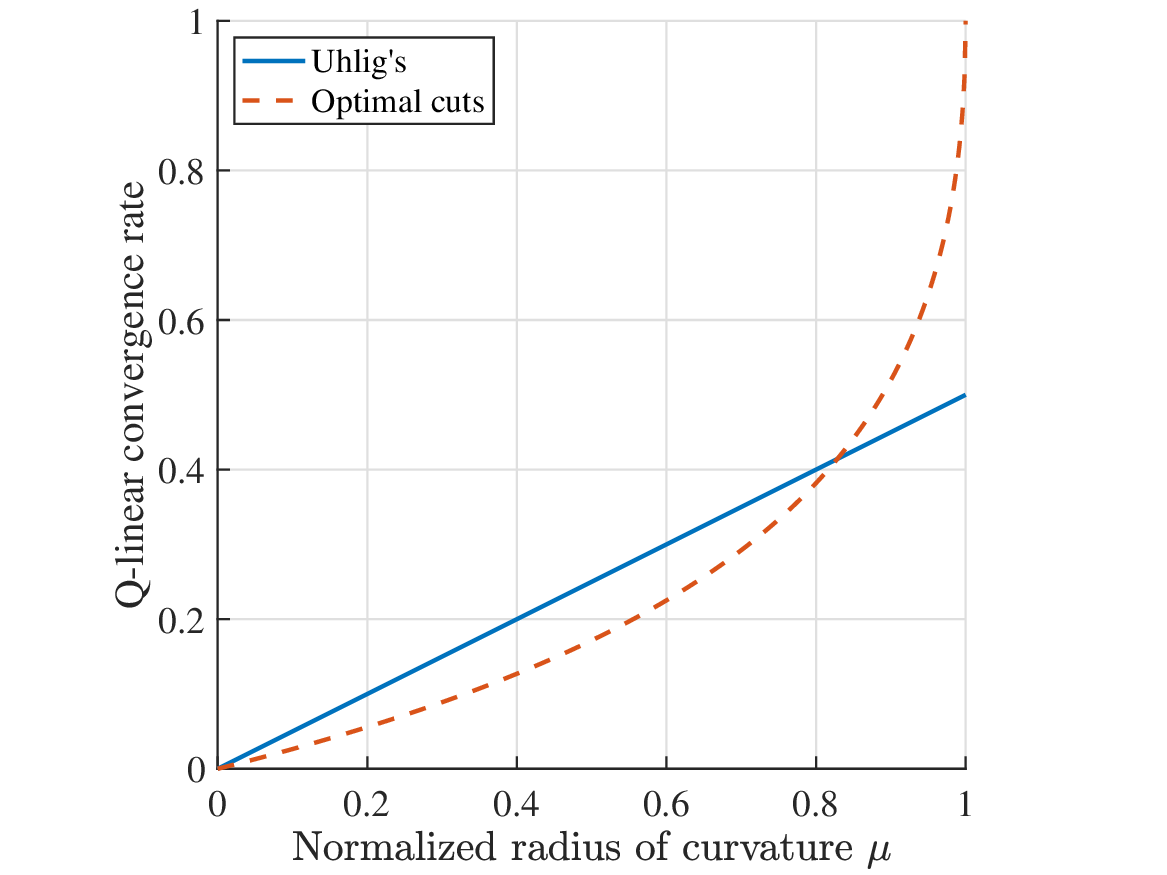}}
\label{fig:conv_rates}
}
\subfloat[Cost to approximate $\bfovX{M}$ region.]{
\resizebox*{6.0cm}{!}{\includegraphics[trim=1.3cm 0cm 1.3cm 0cm,clip]{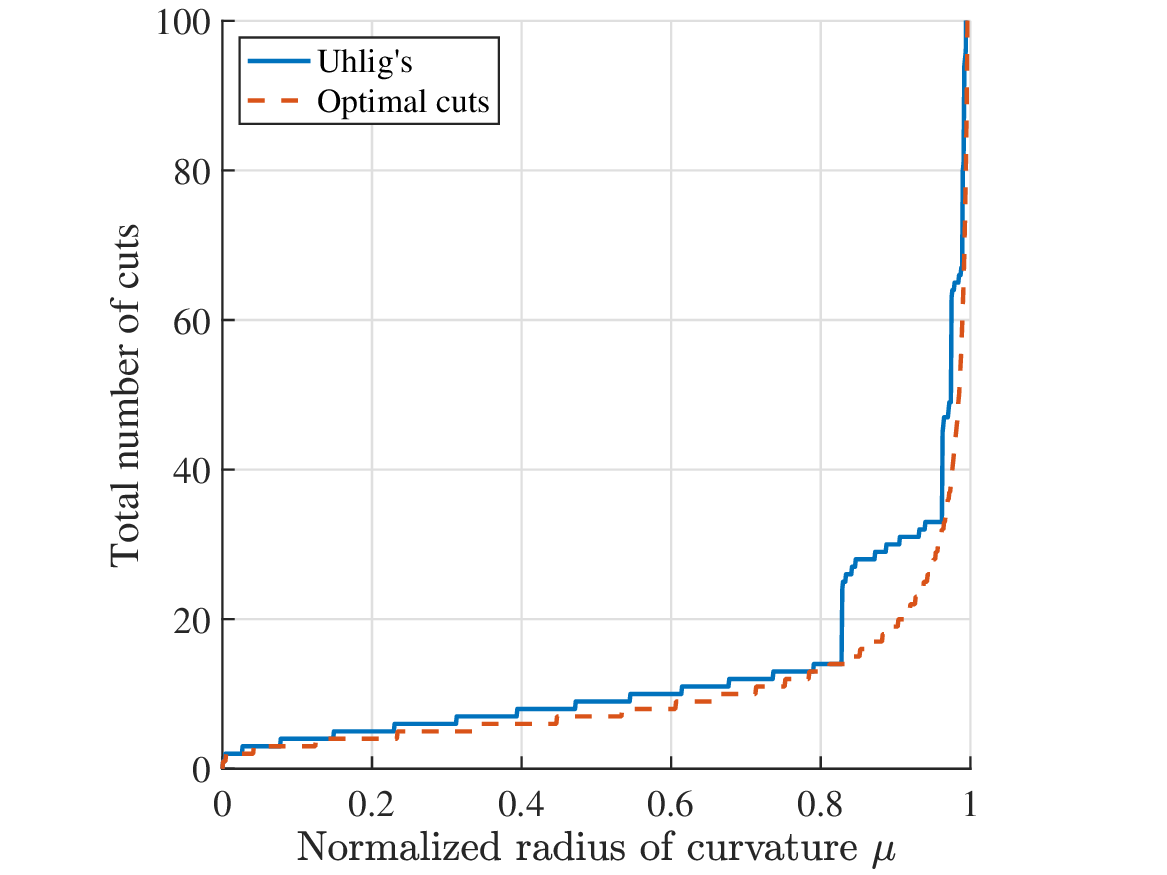}}
\label{fig:total_cuts}
}
\caption{
Left: the respective convergence rates of $\{\phi_j\}$ given by \cref{thm:uhlig_angle,thm:opt_conv}.
Right: for the parametric matrix $M$ with $\mu \in (0,1]$ given by~\cref{ex:circ} and a tolerance \mbox{$\tau_\mathrm{tol} = \texttt{1e-14}$},
the total number of cuts required 
to sufficiently approximate the region of~$\bfovX{M}$ specified by the supporting hyperplanes 
$L_{\theta_\star}$ and~$L_{\theta_j}$ respectively passing through $b_\star$ and~$b_j$
and the corner $c_j$, where $\theta_\star = 0$ and $\theta_j = -\tfrac{\pi}{50}$.
}
 \label{fig:rates_comp}
\end{figure}

\section{A hybrid algorithm}
\label{sec:hybrid}
\cref{tbl:eig} and our new analyses suggest
that it would be far more efficient to combine level-set and cutting-plane techniques in a single hybrid algorithm
rather than rely on either technique alone.
For the smallest values of~$n$, the level-set approach would generally be most efficient,
while for larger problem sizes, which approach would
be fastest depends on the specific shape of $\fovA$ and the normalized 
radius of curvature at outermost points.
While we cannot know these things \emph{a priori}, per \eqref{eq:mu_est},
 \cref{alg2} can estimate $\mu$ as it iterates, and 
 so we can predict how many more cuts may be needed about a particular outermost point.
The current approximation~$\mathcal{G}$ and $\mathcal{Z}$ can also be used to obtain 
cheaply computed estimates of how many more cuts will be needed 
to approximate regions of $\bfovA$ to sufficient accuracy.
Consequently, as \cref{alg2} iterates, 
we can maintain an evolving estimate of how many more cuts would be needed 
in order to compute $\numr$ to the desired tolerance.
Thus, our hybrid 
algorithm can automatically determine if the cutting-plane approach 
is likely to be fast or slow, and if the latter, automatically switch to the level-set approach.
For example, in practice, \cref{alg1} often only requires one to two eigenvalue computations
with $R_\gamma - \lambda S$ and several more with~$H(\theta)$.
Hence, in conjunction with tuning/benchmark data, such as that shown in \cref{tbl:eig},
our hybrid algorithm can reliably estimate whether it will be faster to continue \cref{alg2}
or immediately switch to \cref{alg1}, which will be warm-started using the angle of the supporting hyperplane
that passes through the point in $\mathcal{Z}$ that attains $l$ in line~5 of~\cref{alg2},
as well as the arguments of the corners to the left and right of this point.

\section{Numerical validation}
\label{sec:experiments}
Experiments were done in \matlab\ R2021a (Update 6)
on a 2020 13" MacBook Pro with an Intel i5 1038NG7 quad-core CPU laptop, 16GB of RAM, and macOS v12.4. 
For each code, the desired relative tolerance was set to~$10^{-14}$, and we only report 
errors when they were greater than this amount.
We used \texttt{eig} for all eigenvalue computations\footnote{In practice, note the following recommendations.
As $n$ increases, \texttt{eigs} should be preferred over \texttt{eig} for computing eigenvalues of $H(\theta)$,
but this can be determined automatically via tuning.
Relatedly, we suggest using \texttt{eigs} with $\texttt{k} > 1$ for robustness, 
as the desired eigenvalue may not always be the first to converge.
For robustly identifying all the unimodular eigenvalues of $R_\gamma - \lambda S$, 
it is generally recommended that structure-preserving eigensolvers be used, e.g., \cite{BenBMetal02,BenSV16}.
}   
as (a) it sufficed to verify the benefits of our new methods
and theoretical results and (b) this consistency 
simplifies the comparisons; e.g., \texttt{numr}, Mengi's implementation of his level-set method with Overton, 
also only uses \texttt{eig}.  
All code and data are included as supplementary material for reproducibility.
Implementations of our new methods will also be added to ROSTAPACK~\cite{rostapack}.

We begin by comparing \cref{alg1} to \texttt{numr}.
Per \cref{tbl:level},
\cref{alg1} generally only needed a single eigenvalue computation with $R_\gamma - \lambda S$ and at most two,
and for $n \geq 300$, ranged from 4.3--6.9 times faster than \texttt{numr}.
Even with optimization disabled, our iteration using $\rho(H(\theta))$ 
was still faster than~\texttt{numr}.

\begin{table}
\centering
\footnotesize
\caption{
For dense random $A$ matrices,
the costs of \cref{alg1}
and Mengi's code \texttt{numr} (MO, for Mengi and Overton's method) are shown.
We tested \cref{alg1} with optimization enabled (Opt., done via Newton's method) and disabled (Mid., for midpoints only).
}
\setlength{\tabcolsep}{6pt} 
\begin{tabular}{r SSc | ccc | SSS} 
\toprule
\multicolumn{1}{c}{} &
\multicolumn{3}{c}{\# of \texttt{eig($R_\gamma$,$S$)}} & 
\multicolumn{3}{c}{\# of \texttt{eig($H(\theta)$)}} &
\multicolumn{3}{c}{Time (sec.)}  \\
\cmidrule(lr){2-4}
\cmidrule(lr){5-7}
\cmidrule(lr){8-10}
\multicolumn{1}{c}{} & 
\multicolumn{2}{c}{Alg.~\ref{alg1}} & 
\multicolumn{1}{c}{MO} & 
\multicolumn{2}{c}{Alg.~\ref{alg1}} & 
\multicolumn{1}{c}{MO} & 
\multicolumn{2}{c}{Alg.~\ref{alg1}} & 
\multicolumn{1}{c}{MO} \\ 
\cmidrule(lr){2-3}
\cmidrule(lr){4-4}
\cmidrule(lr){5-6}
\cmidrule(lr){7-7}
\cmidrule(lr){8-9}
\cmidrule(lr){10-10}
\multicolumn{1}{c}{$n$} & 
\multicolumn{1}{c}{Opt.} & 
\multicolumn{1}{c}{Mid.} & 
\multicolumn{1}{c}{} & 
\multicolumn{1}{c}{Opt.} & 
\multicolumn{1}{c}{Mid.} & 
\multicolumn{1}{c}{} & 
\multicolumn{1}{c}{Opt.} & 
\multicolumn{1}{c}{Mid.} & 
\multicolumn{1}{c}{} \\ 
\midrule
\multicolumn{1}{r|}{ 100} &  1 &  4 &  6 &  9 & 10 & 39 &   0.1 &   0.2 &   0.2 \\ 
\multicolumn{1}{r|}{ 200} &  2 &  4 &  5 & 14 & 11 & 34 &   0.6 &   1.0 &   1.2 \\ 
\multicolumn{1}{r|}{ 300} &  1 &  4 &  5 & 17 &  9 & 26 &   1.2 &   3.9 &   4.9 \\ 
\multicolumn{1}{r|}{ 400} &  1 &  5 &  7 & 17 & 15 & 42 &   2.7 &  11.7 &  16.0 \\ 
\multicolumn{1}{r|}{ 500} &  1 &  3 &  7 &  7 &  8 & 54 &   4.5 &  12.8 &  30.7 \\ 
\multicolumn{1}{r|}{ 600} &  1 &  4 &  7 &  8 & 10 & 45 &   7.7 &  28.7 &  51.4 \\ 
\bottomrule
\end{tabular} 
\label{tbl:level}
\end{table}

We now verify that our local convergence rate analyses from \cref{sec:uhlig_conv} and \cref{sec:opt_conv}
do indeed hold for general matrices and that our procedure for computing optimal cuts is sufficiently accurate
to realize the convergence rate given by \cref{thm:opt_conv}.
First, we obtained 200 general examples with roughly equally spaced values of~\mbox{$\mu \in [0,1]$}.
This was done by running optimization on $\min_{X} r(A+BXC)$, 
where $A \in \C^{10 \times 10}$ is diagonal, while $B \in \C^{10 \times 5}$, $C \in \C^{5 \times 10}$,
and $X \in \R^{5 \times 5}$ are dense, and collecting the iterates.
By starting at $X=0$, we obtain an example with~$\mu=0$; since~$A$ is diagonal, $\fovA$ is a polygon. 
Since minimizing $r(A+BXC)$  
often causes $\mu \to 1$ as optimization progresses~\cite{LewO20},
we also obtain a sequence of examples $A+BX_kC$ for iterates $\{X_k\}$ with various~$\mu$~values
(computed via \cref{thm:curvature}).
Generating new $A$, $B$, and $C$ matrices and running optimization from $X_0=0$ was repeated 
in a loop until the desired set of 200 general examples had been obtained.
For each problem, we recorded the total number of cuts
that Uhlig's cutting procedure and the optimal-cut strategy needed to 
sufficiently approximate the field of values boundary in a small neighborhood to one side of the outermost point in its field of values.  More specifically, 
we performed an analogous experiment to the one we showed earlier for approximating 
a region of the boundary of \cref{ex:circ}.
As can be seen by comparing \cref{fig:total_cuts,fig:alg2_exp}, for any given $\mu$, 
the total number of cuts needed on arbitrarily shaped fields of values 
is essentially the same as that needed for \cref{ex:circ},
thus validating the generality of our convergence rate analysis and the reliability of our method for computing optimal cuts.

\begin{figure}[t]
\centering
\resizebox*{6.0cm}{!}{\includegraphics[trim=1.3cm 0cm 1.3cm 0cm,clip]{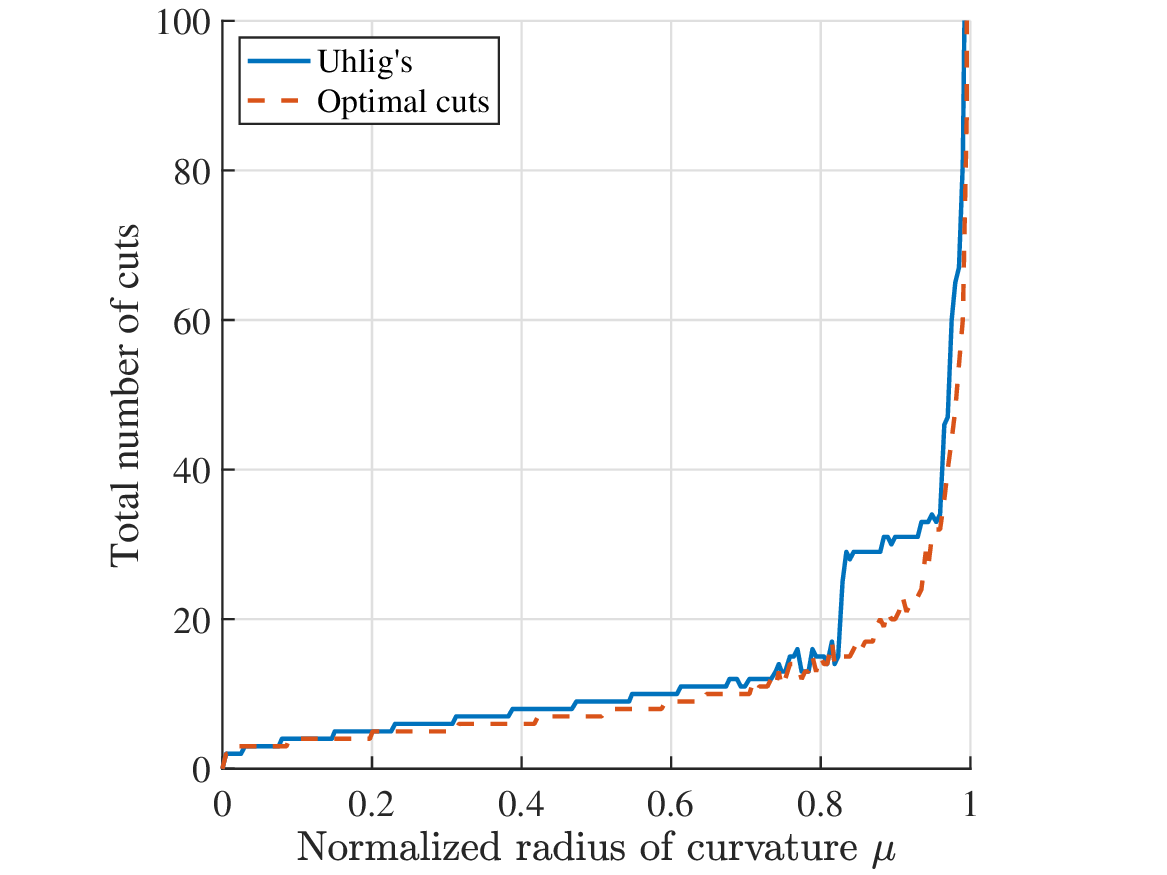}}
\caption{
Using 200 different general matrices of the form $A+BXC$ with arbitrarily shaped fields of values and $\mu$ values roughly equally spaced between $0$ and $1$ at the outermost points in $\fovX{A+BXC}$,
Uhlig's cutting procedure and optimal cuts are compared.
For each example with a different value of~$\mu$, 
we plot the total number of cuts needed to approximate the region of~$\bd\fovX{A+BXC}$ specified by 
supporting hyperplanes~$L_{\theta_\star}$ and~$L_{\theta_j}$ and the corner $c_j$ of $\mathcal{G}_j$ that they define, 
where~$L_{\theta_\star}$ passes through the outermost point~$b_\star$ and~\mbox{$\theta_j = \theta_\star - \tfrac{\pi}{50}$}.
The strong agreement of this plot with that of~\cref{fig:total_cuts}
empirically validates the generality of our convergence rate analyses,
as~\cref{fig:total_cuts} gives the analogous experiments for approximating 
the ``same-sized" region of~$\bd\fovX{M}$ (in the sense that~\mbox{$\theta_\star - \theta_j = \tfrac{\pi}{50}$})
for the parametric matrix~$M$ given by~\cref{ex:circ} with $\mu \in (0,1]$.
}
 \label{fig:alg2_exp}
\end{figure}

For comparing our improved level-set and cutting-plane methods,
 we also set \cref{alg2} to do optimization via Newton's method, and per \cref{rem:two_hp},
had it add supporting hyperplanes for both $\lambda_\mathrm{max}$ 
and $\lambda_\mathrm{min}$ on every cut.
For test problems, we used
the Gear, Grcar, and FM examples used by Uhlig in \cite{Uhl09}, 
 \texttt{randn}-based complex matrices,
and  $\e^{\imagunit 0.25\pi}((1 - \mu) I + \mu K_n)$ with
\mbox{$\mu=0.9999$} and $K_n$ from~\eqref{eq:crabb},
which is a rotated version of \cref{ex:circ} that we call Nearly Disk.
In~\cref{tbl:both}, we again see that \cref{alg1} 
is well optimized in terms of its overall possible efficiency, 
as it often only required a single computation with $R_\gamma - \lambda S$ 
and at most two.  As predicted by our analysis,
we also see that the cost of \cref{alg2} is highly correlated
with the value of $\mu$. 
On Gear ($\mu \approx 0$), \cref{alg2} was extremely fast, essentially showing Q-superlinear convergence.
In fact, on the Gear, Grcar, FM, and \texttt{randn} matrices, 
\cref{alg2} was much faster (3.4 to 234.2 times) than \cref{alg1} as $\mu < 0.9$ for all of these problems.
In contrast, for Nearly Disk ($\mu=0.9999$),
 \cref{alg2} was noticeably slower, with our level-set approach now being 5.7 to~11.5 times faster.

\begin{table}
\centering
\footnotesize 
\caption{
The  respective costs of \cref{alg1,alg2} are shown.
The values of $\mu$ at outermost points are also shown,
computed via \cref{thm:curvature}. 
}
\setlength{\tabcolsep}{6pt} 
\begin{tabular}{lrc | cS | r | SS } 
\toprule
\multicolumn{1}{c}{} & 
\multicolumn{1}{c}{} & 
\multicolumn{1}{c}{} & 
\multicolumn{3}{c}{\# of calls to \texttt{eig($\cdot$)}} &
\multicolumn{2}{c}{Time (sec.)}  \\
\cmidrule(lr){4-6}
\cmidrule(lr){7-8}
\multicolumn{1}{c}{} & 
\multicolumn{1}{c}{} &
\multicolumn{1}{c}{} & 
\multicolumn{2}{c}{Alg.~\ref{alg1}} & 
\multicolumn{1}{c}{Alg.~\ref{alg2}} & 
\multicolumn{1}{c}{Alg.~\ref{alg1}} & 
\multicolumn{1}{c}{Alg.~\ref{alg2}} \\
\cmidrule(lr){4-5}
\cmidrule(lr){6-6}
\cmidrule(lr){7-7}
\cmidrule(lr){8-8}
\multicolumn{1}{l}{Problem} & 
\multicolumn{1}{c}{$n$} & 
\multicolumn{1}{c}{$\mu$} & 
\multicolumn{1}{c}{$R_\gamma - \lambda S$} & 
\multicolumn{1}{c}{$H(\theta)$} & 
\multicolumn{1}{c}{$H(\theta)$} & 
\multicolumn{1}{c}{} & 
\multicolumn{1}{c}{} \\ 
\midrule
Gear            & \multicolumn{1}{|r|}{ 320} & $1.194 \times 10^{-6}$  &   1  &   2  &    4  &   1.2  &   0.1 \\ 
Gear            & \multicolumn{1}{|r|}{ 640} & $1.499 \times 10^{-7}$  &   1  &   5  &    3  &  15.2  &   0.1 \\ 
Gear            & \multicolumn{1}{|r|}{1280} & $1.878 \times 10^{-8}$  &   1  &   5  &    3  & 224.6  &   1.0 \\ 
\midrule
Grcar           & \multicolumn{1}{|r|}{ 320} & $              0.6543$  &   1  &  32  &   28  &   1.5  &   0.3 \\ 
Grcar           & \multicolumn{1}{|r|}{ 640} & $              0.6544$  &   1  &  29  &   29  &  15.3  &   1.7 \\ 
Grcar           & \multicolumn{1}{|r|}{1280} & $              0.6544$  &   1  &  27  &   29  & 215.0  &  12.8 \\ 
\midrule
FM              & \multicolumn{1}{|r|}{ 320} & $              0.1851$  &   1  &  11  &   19  &   1.3  &   0.1 \\ 
FM              & \multicolumn{1}{|r|}{ 640} & $              0.1836$  &   1  &   8  &   18  &  10.9  &   0.5 \\ 
FM              & \multicolumn{1}{|r|}{1280} & $              0.1829$  &   1  &   9  &   18  &  87.7  &   3.3 \\ 
\midrule
\texttt{randn}  & \multicolumn{1}{|r|}{ 320} & $              0.7576$  &   1  &  15  &   40  &   1.6  &   0.5 \\ 
\texttt{randn}  & \multicolumn{1}{|r|}{ 640} & $              0.8663$  &   2  &  17  &   67  &  24.7  &   3.7 \\ 
\texttt{randn}  & \multicolumn{1}{|r|}{1280} & $              0.7971$  &   2  &  23  &   50  & 203.5  &  22.0 \\ 
\midrule
Nearly Disk     & \multicolumn{1}{|r|}{ 320} & $              0.9999$  &   1  &   6  & 1570  &   1.7  &  20.0 \\ 
Nearly Disk     & \multicolumn{1}{|r|}{ 640} & $              0.9999$  &   1  &   6  & 1561  &  13.2  &  88.6 \\ 
Nearly Disk     & \multicolumn{1}{|r|}{1280} & $              0.9999$  &   1  &   7  & 1556  & 114.4  & 647.2 \\ 
\bottomrule
\end{tabular} 
\label{tbl:both}
\end{table}

Finally, we benchmark our hybrid algorithm and begin by  comparing
it with~\cref{alg1,alg2}.
We tested our three algorithms on $n=400$ and $n=800$ examples 
with values 
$\mu \in \{0.1,0.2,\ldots,0.9,0.99,\ldots,0.9999999\}$ in order
to represent the range of normalized radius of curvatures 
that may be encountered when minimizing the numerical radius.
Since minimizing~$r(A+BXC)$ to generate such matrices would be prohibitively expensive
for these values of $n$ and $\mu$, 
we instead generated examples of the 
form~\mbox{$T_{n,\mu} = \eit \begin{bsmallmatrix} M & 0 \\ 0 & D\end{bsmallmatrix}$}, where 
$\theta \in [0,2\pi)$ was chosen randomly, $M$ is an instance of \cref{ex:circ}
 with the desired value of $\mu$ and dimension $n-100$, and 
$D \in \C^{100 \times 100}$ is a complex diagonal matrix.
In order to make $h(\theta)$ have many local maximizers, 
we chose $M$ such that~$r(M)=1$ and then picked the elements of $D$ so that
they were roughly placed near a circle drawn between $\bfovX{\eit M}$ and the unit circle,
biased towards the latter;
see \cref{fig:hybrid_ex_fov} for a visualization.
\cref{fig:hybrid_ex_htheta} shows how this choice of~$D$ 
indeed causes~$h(\theta)$ to have many local maximizers, 
while the randomly chosen~$\eit$ scalar
means that the unique global maximizer may occur anywhere.
The running times of our three algorithms on these $T_{n,\mu}$ examples are
shown in \cref{fig:hybrid_comp}.
Once again, we see that the running time of \cref{alg1} remains fairly constant
across all the values of $\mu$, while the running time of \cref{alg2} is much faster 
for small values of $\mu$ but then blows up as $\mu \to 1$.  
Most importantly, \cref{fig:hybrid_comp} verifies that our hybrid algorithm indeed remains efficient 
for all values of~$\mu$ since it automatically detects when to switch from 
the cutting-plane approach to the level-set approach.
In fact, our hybrid algorithm even becomes more efficient than \cref{alg1} for $\mu$ close to one.
This is because when it switches to the level-set approach, \cref{alg2} often provided 
such good starting points
that only one eigenvalue computation with~$R_\gamma - \lambda S$ was needed.
In contrast, \cref{alg1} always required two eigenvalue computations  with $R_\gamma - \lambda S$ 
on our $T_{n,\mu}$ test problems.

\begin{figure}
\centering
\subfloat[$n=400$, $\mu = 0.4$.]{
\resizebox*{6.0cm}{!}{\includegraphics[trim=1.3cm 0cm 1.3cm 0cm,clip]{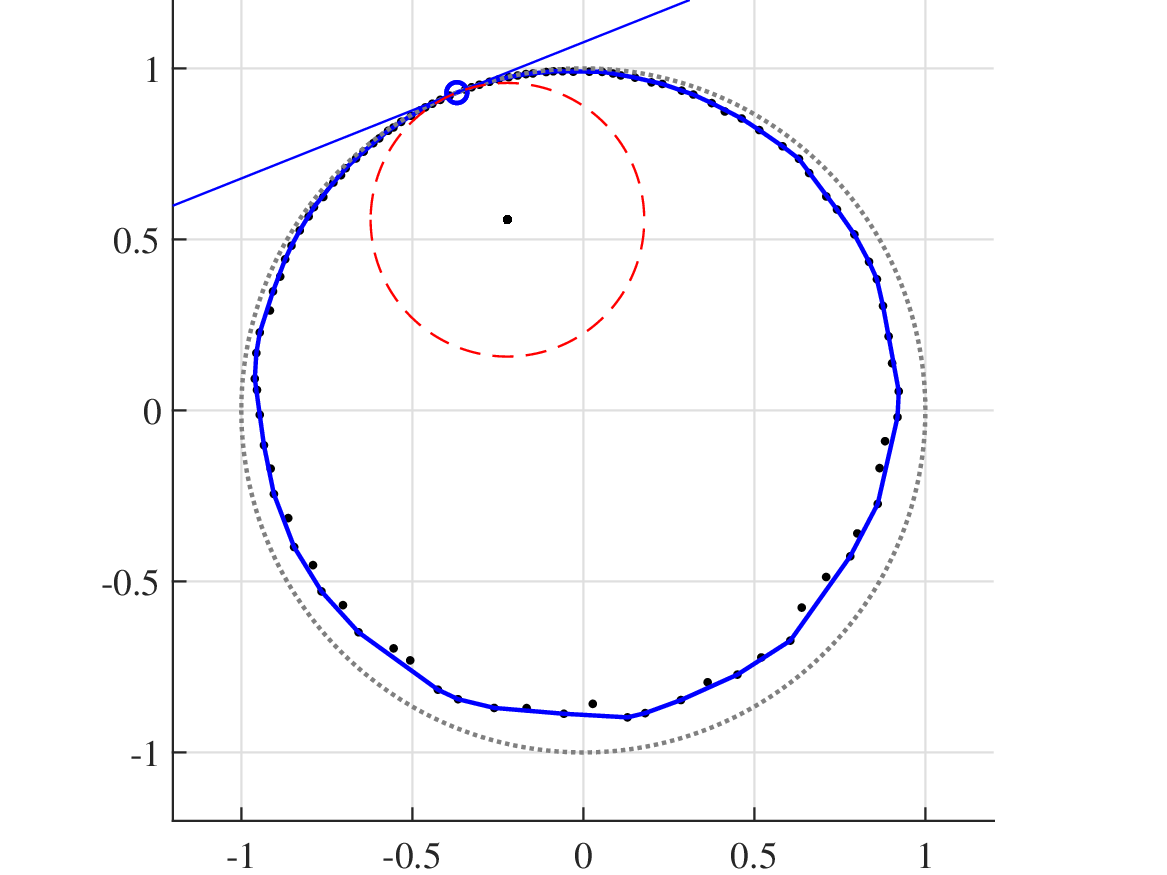}}
\label{fig:hybrid_ex_fov}
}
\subfloat[$n=400$, $\mu = 0.99$]{
\resizebox*{6.0cm}{!}{\includegraphics[trim=1.3cm 0cm 1.3cm 0cm,clip]{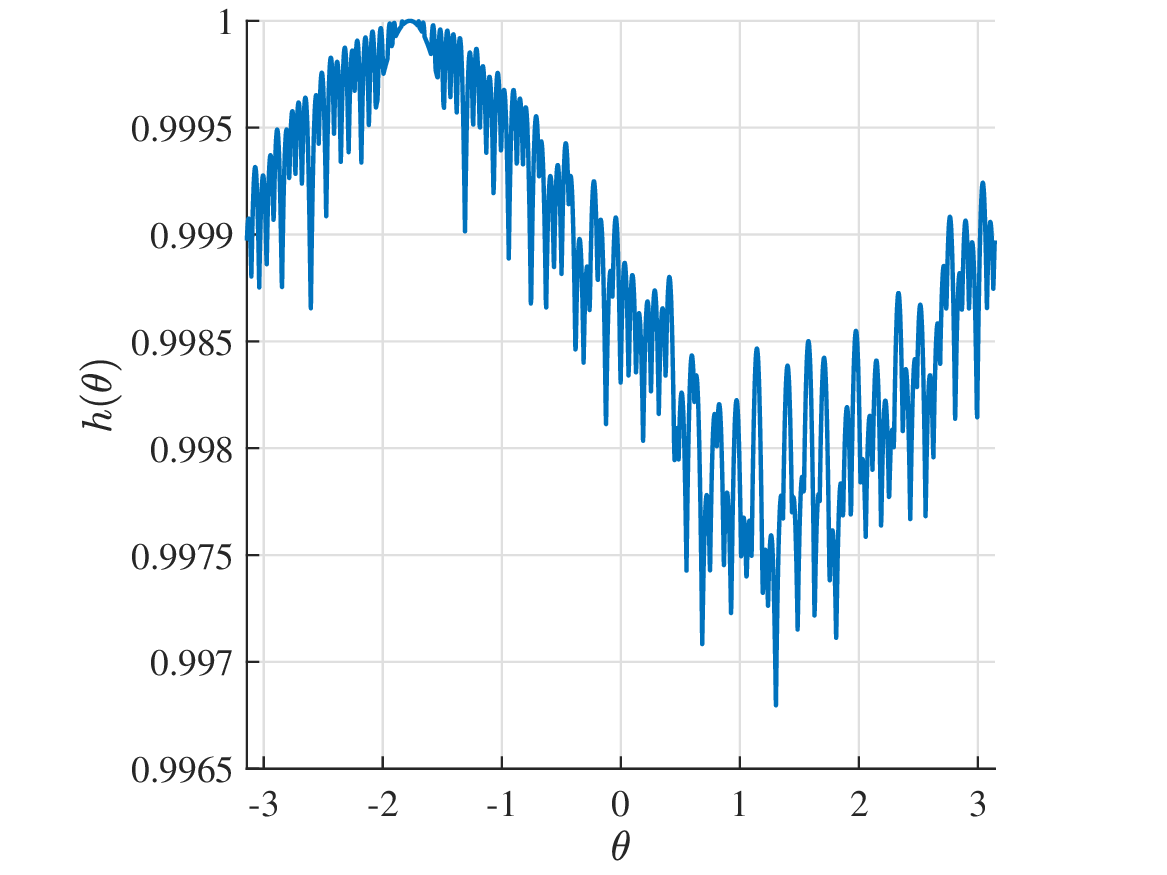}}
\label{fig:hybrid_ex_htheta}
}
\caption{
Left: the field of values, eigenvalues, and the osculating circle at the unique point attaining the
numerical radius are shown for one instance of $T_{n,\mu}$; 
to read the plot, see the caption of \cref{fig:demo}.
Right: a plot of $h(\theta)$ for another instance of $T_{n,\mu}$.
}
 \label{fig:hybrid_exs}
\end{figure}

\begin{figure}
\centering
\subfloat[$n=400$.]{
\resizebox*{6.16cm}{!}{\includegraphics[trim=0.06cm 0cm 0.25cm 0cm,clip]{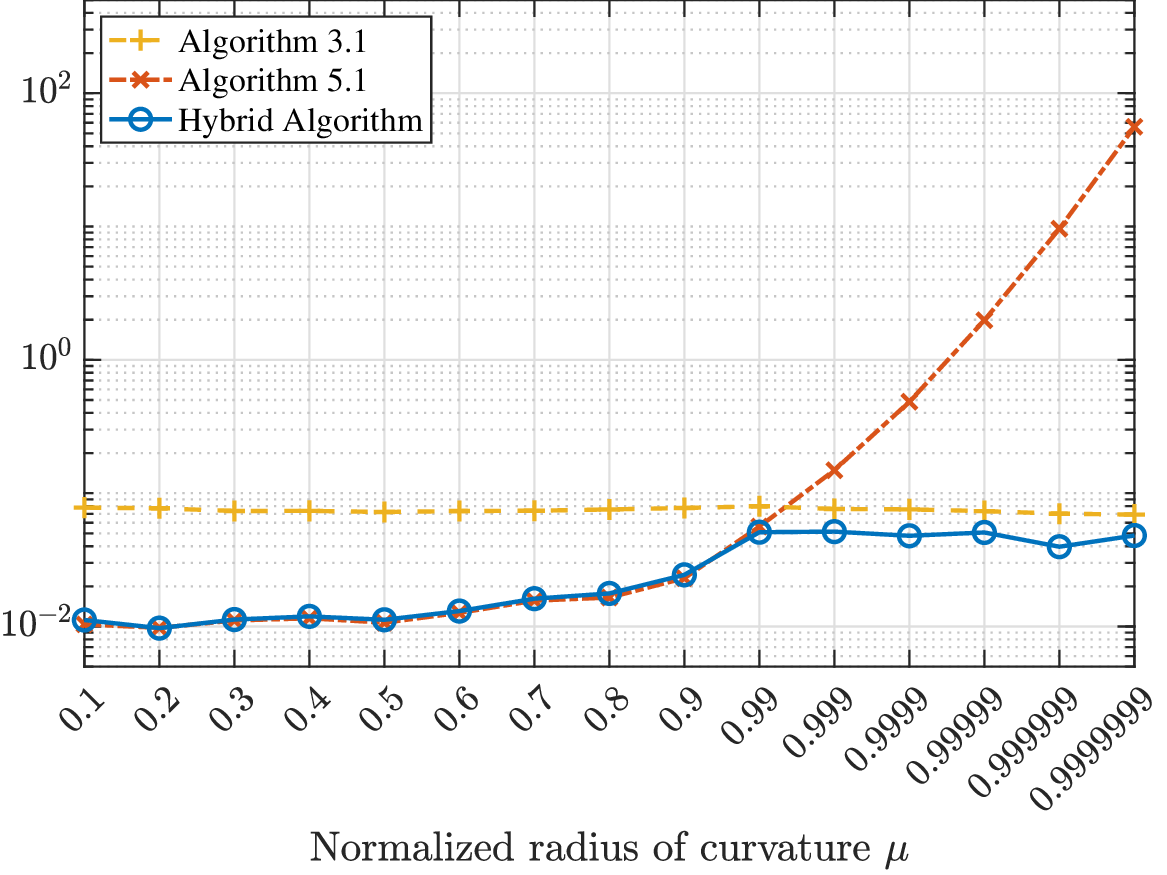}}
\label{fig:hybrid_400}
}
\subfloat[$n=800$]{
\resizebox*{6.16cm}{!}{\includegraphics[trim=0.06cm 0cm 0.25cm 0cm,clip]{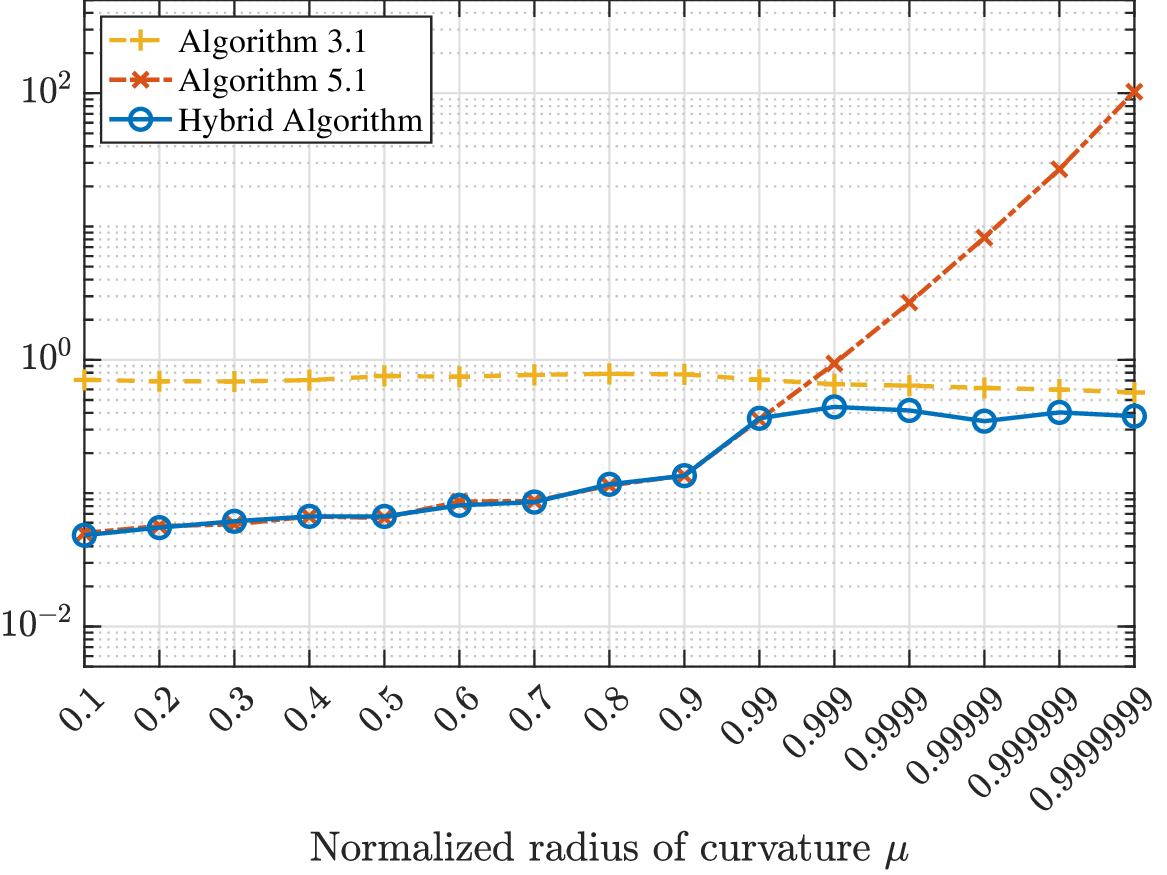}}
\label{fig:hybrid_800.}
}
\caption{
Running times (in minutes, $\log_{10}$ scale) of the algorithms on  $T_{n,\mu}$ with different normalized curvatures.
}
 \label{fig:hybrid_comp}
\end{figure}

For our last set of experiments, we compare our hybrid algorithm
with Mengi's \texttt{numr} code and Uhlig's \texttt{NumRadius} routine,
which respectively implement the older methods described in~\cite{MenO05} and \cite{Uhl09}.
As our hybrid algorithm allows up to~$10000$ cutting-plane iterations by default,
we also set the maximum allowed number of iterations of \texttt{NumRadius} to $10000$ (its default is only $200$).
Furthermore, we set \texttt{NumRadius} to use \texttt{eig}, since we have standardized on using \texttt{eig} for all experiments.
We tested the three codes on 39 matrices, all $800 \times 800$ in size, with 20 taken from EigTool~\cite{eigtool},
and another 19 taken from \texttt{gallery} in \matlab;
the matrices have a variety of different $\mu$ values at outermost points in the field of values,
ranging from~$\mu$ close to~$0$ to~$\mu = 1$.
\cref{tbl:codecomp} reports the performance data of the three codes on these problems, and from the table, 
it is immediately apparent that our hybrid algorithm remains efficient across all problems, while \texttt{numr} is quite slow on most of the problems,
and \texttt{NumRadius} becomes more and more inefficient as~$\mu \to 1$, as expected.
Often our hybrid method is also the fastest method on each problem, and even when it is not,
its performance is generally quite close to the fastest code, precisely because our algorithm
automatically adapts to each problem.
For problems with~$\mu < 0.831$, our hybrid is generally neck and neck with \texttt{NumRadius}, while being significantly
faster than \texttt{numr}.
Meanwhile, for problems with~$\mu \geq 0.831$, i.e., problems for which optimal cuts and our hybrid algorithm have the most benefit,
our hybrid algorithm is generally notably faster than \texttt{NumRadius},
and becomes dramatically faster for the three problems with~$\mu = 1.000$.
Our hybrid algorithm is also much faster than \texttt{numr} on these~$\mu \geq 0.831$ problems,
except for the three $\mu = 1.000$ examples, where \texttt{numr} ranged from 1.1--1.9 times faster.  
In some cases, we see that our method reduces the total number of eigenvalues computations with $H(\theta)$ (with respect to the total done by \texttt{NumRadius})
more than what is suggested by comparing the respective running times, e.g., on \texttt{'randcolu'};
we believe this is because our \matlab\ implementation is significantly more complex than \texttt{NumRadius} and so has a higher overhead.
Overall, we see that the experiments in~\cref{tbl:codecomp} paint a similar picture to our earlier experiments shown in~\cref{fig:hybrid_comp}.
Using a compiled language and parallelism may help alleviate these remaining issues.
Finally, we briefly discuss the accuracy of the computed estimates.  On most of the problems, all three codes produced
estimates with at least 14 digits of agreement.  However, for the three $\mu=1.000$ examples, \texttt{NumRadius}
reached the maximum of $10000$ iterations, and so its upper bound could only certify that seven or eight digits were correct.
Meanwhile, when \texttt{numr} only performed one level-set test before terminating, we noticed that it returned estimates that were slightly too high,
with errors in the tenth most significant digit; this appears to be a minor bug in the \texttt{numr} routine.

\begin{table}
\centering
\footnotesize 
\caption{
The performance data of Mengi's \texttt{numr} (MO), Uhlig's \texttt{NumRadius} (U), and
our hybrid algorithm (Hy.) are shown for various $800 \times 800$ matrices from EigTool and \texttt{gallery} in \matlab\ 
(matrices from \texttt{gallery} are shown in single-quotes in table).  
The problems are sorted in order of increasing $\mu$ value 
(all $\mu$ values are positive and shown to the first three decimal places),
and the table is divided at $\mu \approx 0.831$, above which optimal cuts
and our hybrid algorithm have the most benefits.
The fastest running time per problem is typeset in bold.
}
\setlength{\tabcolsep}{4.5pt} 
\begin{tabular}{l | c | cc | rrr | rrr } 
\toprule
\multicolumn{2}{c}{} &
\multicolumn{5}{c}{\# of calls to \texttt{eig($\cdot$)}} &
\multicolumn{3}{c}{} \\
\cmidrule(lr){3-7}
\multicolumn{1}{c}{} &
\multicolumn{1}{c}{} &
\multicolumn{2}{c}{$R_\gamma - S$} & 
\multicolumn{3}{c}{$H(\theta)$} &
\multicolumn{3}{c}{Time (sec.)} \\
\cmidrule(lr){3-4}
\cmidrule(lr){5-7}
\cmidrule(lr){8-10}
\multicolumn{1}{c}{Problem} & 
\multicolumn{1}{c}{$\mu$} & 
\multicolumn{1}{c}{MO} & 
\multicolumn{1}{c}{Hy.} & 
\multicolumn{1}{c}{MO} & 
\multicolumn{1}{c}{U} & 
\multicolumn{1}{c}{Hy.} & 
\multicolumn{1}{c}{MO} & 
\multicolumn{1}{c}{U} & 
\multicolumn{1}{c}{Hy.} \\
\midrule
gausseidel(C)     & 0.000 &    1 &    0 &      1 &      3 &      3 &            23.1 &             0.4 &    \textbf{0.4} \\ 
\texttt{'chebvand'}      & 0.000 &    6 &    0 &     97 &     20 &     15 &            82.8 &             1.8 &    \textbf{1.5} \\ 
\texttt{'dorr'}          & 0.000 &    1 &    0 &      9 &      4 &      4 &            19.3 &    \textbf{0.3} &             0.4 \\ 
twisted                  & 0.002 &    3 &    0 &    692 &     12 &      7 &           102.3 &             1.2 &    \textbf{0.8} \\ 
basor                    & 0.007 &    3 &    0 &     14 &      9 &     12 &            57.6 &    \textbf{1.0} &             1.3 \\ 
convdiff                 & 0.022 &    2 &    0 &   3256 &      6 &      5 &           224.2 &             0.5 &    \textbf{0.5} \\ 
davies                   & 0.022 &    1 &    0 &   1617 &      9 &     11 &           112.5 &    \textbf{0.9} &             1.1 \\ 
airy                     & 0.022 &    2 &    0 &   3209 &      8 &     11 &           224.9 &    \textbf{0.8} &             1.1 \\ 
landau                   & 0.047 &    3 &    0 &     14 &     30 &     31 &            46.4 &    \textbf{2.7} &             2.9 \\ 
hatano                   & 0.085 &    1 &    0 &      1 &      7 &      7 &            15.4 &    \textbf{0.6} &             0.7 \\ 
\texttt{'clement'}       & 0.131 &    1 &    0 &      9 &     26 &     13 &            19.0 &             2.4 &    \textbf{1.4} \\ 
\texttt{'redheff'}       & 0.155 &    1 &    0 &      5 &      8 &      8 &            14.2 &    \textbf{0.7} &             0.8 \\ 
\texttt{'riemann'}       & 0.284 &    1 &    0 &      5 &     10 &      9 &            17.9 &             1.0 &    \textbf{0.9} \\ 
\texttt{'lesp'}          & 0.330 &    2 &    0 &   1608 &     11 &     11 &           140.0 &    \textbf{1.0} &             1.1 \\ 
gausseidel(D)    & 0.392 &    1 &    0 &      1 &     12 &     10 &            15.6 &             1.2 &    \textbf{1.0} \\ 
\texttt{'jordbloc'}      & 0.500 &    1 &    0 &      1 &     13 &     12 &            21.3 &    \textbf{1.2} &             1.2 \\ 
transient                & 0.615 &    2 &    0 &   1117 &     28 &     28 &           114.3 &             2.7 &    \textbf{2.7} \\ 
frank                    & 0.641 &    1 &    0 &      5 &     16 &     14 &            12.4 &             1.6 &    \textbf{1.4} \\ 
grcar                    & 0.654 &    7 &    0 &    333 &     34 &     28 &           159.5 &             3.4 &    \textbf{2.9} \\ 
\texttt{'dramadah'}      & 0.659 &    1 &    0 &      5 &     16 &     14 &            13.9 &             1.5 &    \textbf{1.5} \\ 
\texttt{'chow'}          & 0.664 &    1 &    0 &      5 &     16 &     15 &            13.9 &    \textbf{1.5} &             1.6 \\ 
\texttt{'triw'}          & 0.669 &    2 &    0 &   1606 &     17 &     16 &           128.8 &    \textbf{1.6} &             1.6 \\ 
chebspec                 & 0.727 &    1 &    0 &      5 &     18 &     17 &            13.1 &    \textbf{1.7} &             1.7 \\ 
kahan                    & 0.741 &    2 &    0 &     11 &     18 &     18 &            28.9 &    \textbf{1.7} &             1.9 \\ 
\texttt{'cycol'}         & 0.807 &    7 &    0 &     69 &     54 &     59 &            88.6 &    \textbf{5.0} &             6.0 \\ 
gausseidel(U)      & 0.818 &    1 &    0 &      1 &     22 &     21 &            23.0 &             5.9 &    \textbf{5.8} \\ 
\midrule
riffle                   & 0.831 &    1 &    0 &      1 &     36 &     22 &            13.9 &             3.7 &    \textbf{2.2} \\ 
\texttt{'randcolu'}      & 0.837 &    5 &    0 &     63 &     62 &     53 &            78.2 &             5.8 &    \textbf{5.6} \\ 
random                   & 0.843 &    5 &    0 &     45 &     72 &     56 &            77.9 &             7.1 &    \textbf{6.0} \\ 
\texttt{'lotkin'}        & 0.887 &    1 &    0 &      1 &     41 &     28 &            11.8 &             4.0 &    \textbf{2.8} \\ 
\texttt{'randjorth'}     & 0.924 &    5 &    0 &     55 &     82 &     62 &            64.4 &             8.0 &    \textbf{6.5} \\ 
\texttt{'leslie'}        & 0.929 &    1 &    0 &      1 &     43 &     33 &            13.5 &             4.2 &    \textbf{3.5} \\ 
\texttt{'randsvd'}       & 0.934 &    2 &    0 &     11 &     49 &     45 &            26.3 &             4.6 &    \textbf{4.4} \\ 
orrsommerfeld            & 0.935 &    3 &    0 &   4024 &     85 &     77 &           297.2 &             8.2 &    \textbf{7.5} \\ 
randomtri                & 0.944 &    6 &    0 &     54 &     97 &     75 &            84.3 &            10.0 &    \textbf{7.8} \\ 
demmel                   & 0.998 &    2 &    0 &     11 &    258 &    233 &            25.9 &            24.9 &   \textbf{24.2} \\ 
\texttt{'forsythe'}      & 1.000 &    1 &    1 &      1 &  10000 &     35 &   \textbf{13.6} &           957.2 &            26.1 \\ 
\texttt{'smoke'}         & 1.000 &    1 &    1 &      1 &  10000 &     20 &   \textbf{18.9} &           960.1 &            21.1 \\ 
\texttt{'parter'}        & 1.000 &    1 &    1 &      1 &  10000 &     61 &   \textbf{20.8} &           971.7 &            26.8 \\ 
\midrule 
\multicolumn{7}{r}{Total Time:} & \multicolumn{1}{r}{2479.8} & \multicolumn{1}{r}{3013.8} & \multicolumn{1}{r}{188.8} \\ 
\bottomrule
\end{tabular} 
\label{tbl:codecomp}
\end{table}

\section{Conclusion}
\label{sec:conclusion}
Via our new understanding of the local convergence rate of Uhlig's cutting procedure, as well as how
the overall cost of his method blows up for disk matrices, we have 
precisely explained why Uhlig's method is sometimes much faster or much slower
than the level-set method of Mengi and Overton.  Moreover, this analysis has motivated 
our new hybrid algorithm that automatically switches between cutting-plane and level-set techniques
in order to remain efficient across all numerical radius problems.
Along the way, we have also identified inefficiencies in the earlier level-set and cutting-plane algorithms
and addressed them via our improved versions of these two methodologies.

\clearpage

\bibliographystyle{alpha} 
\bibliography{csc,software}

\begin{thebibliography}{BBMX02}

\bibitem[BB90]{BoyB90}
S.~Boyd and V.~Balakrishnan.
\newblock A regularity result for the singular values of a transfer matrix and
  a quadratically convergent algorithm for computing its {${L}_\infty$-norm}.
\newblock {\em Systems Control Lett.}, 15(1):1--7, 1990.

\bibitem[BBMX02]{BenBMetal02}
P.~Benner, R.~Byers, V.~Mehrmann, and H.~Xu.
\newblock Numerical computation of deflating subspaces of
  skew-{H}amiltonian/{H}amiltonian pencils.
\newblock {\em {SIAM} J. Matrix Anal. Appl.}, 24(1):165--190, 2002.

\bibitem[Ben02]{Ben02a}
I.~Bendixson.
\newblock Sur les racines d'une {\'e}quation fondamentale.
\newblock {\em Acta Math.}, 25(1):359--365, 1902.

\bibitem[Ber65]{Ber65}
C.~A. Berger.
\newblock A strange dilation theorem.
\newblock {\em Notices Amer. Math. Soc.}, 12:590, 1965.

\bibitem[BLO03]{BurLO03}
J.~V. Burke, A.~S. Lewis, and M.~L. Overton.
\newblock Robust stability and a criss-cross algorithm for pseudospectra.
\newblock {\em {IMA} J. Numer. Anal.}, 23(3):359--375, 2003.

\bibitem[BM18]{BenM18a}
P.~Benner and T.~Mitchell.
\newblock Faster and more accurate computation of the {$\mathcal{H}_\infty$}
  norm via optimization.
\newblock {\em {SIAM} J. Sci. Comput.}, 40(5):A3609--A3635, October 2018.

\bibitem[BM19]{BenM19}
P.~Benner and T.~Mitchell.
\newblock Extended and improved criss-cross algorithms for computing the
  spectral value set abscissa and radius.
\newblock {\em {SIAM} J. Matrix Anal. Appl.}, 40(4):1325--1352, 2019.

\bibitem[BMO18]{BenMO18}
P.~Benner, T.~Mitchell, and M.~L. Overton.
\newblock Low-order control design using a reduced-order model with a stability
  constraint on the full-order model.
\newblock In {\em 2018 IEEE Conference on Decision and Control (CDC)}, pages
  3000--3005, December 2018.

\bibitem[BS90]{BruS90}
N.~A. Bruinsma and M.~Steinbuch.
\newblock A fast algorithm to compute the {$H\sb{\infty}$}-norm of a transfer
  function matrix.
\newblock {\em Systems Control Lett.}, 14(4):287--293, 1990.

\bibitem[BSV16]{BenSV16}
P.~Benner, V.~Sima, and M.~Voigt.
\newblock Algorithm 961: {F}ortran 77 subroutines for the solution of
  skew-{H}amiltonian/{H}amiltonian eigenproblems.
\newblock {\em {ACM} Trans. Math. Software}, 42(3):Art. 24, 26, 2016.

\bibitem[Bye88]{Bye88}
R.~Byers.
\newblock A bisection method for measuring the distance of a stable matrix to
  unstable matrices.
\newblock {\em {SIAM} J. Sci. Statist. Comput.}, 9:875--881, 1988.

\bibitem[DHT14]{DriHT14}
T.~A Driscoll, N.~Hale, and L.~N. Trefethen.
\newblock {\em Chebfun Guide}.
\newblock Pafnuty Publications, Oxford, UK, 2014.

\bibitem[Fie81]{Fie81}
M.~Fiedler.
\newblock Geometry of the numerical range of matrices.
\newblock {\em Linear Algebra Appl.}, 37:81--96, 1981.

\bibitem[GO18]{GreO18}
A.~Greenbaum and M.~L. Overton.
\newblock Numerical investigation of {C}rouzeix's conjecture.
\newblock {\em Linear Algebra Appl.}, 542:225--245, 2018.

\bibitem[G{\"u}r12]{Gur12}
M.~G{\"u}rb{\"u}zbalaban.
\newblock {\em Theory and methods for problems arising in robust stability,
  optimization and quantization}.
\newblock PhD thesis, New York University, New York, NY 10003, USA, May 2012.

\bibitem[HJ91]{HorJ91}
R.~A. Horn and C.~R. Johnson.
\newblock {\em Topics in Matrix Analysis}.
\newblock Cambridge University Press, Cambridge, 1991.

\bibitem[HW97]{HeW97}
C.~He and G.~A. Watson.
\newblock An algorithm for computing the numerical radius.
\newblock {\em {IMA} J. Numer. Anal.}, 17(3):329--342, June 1997.

\bibitem[Joh78]{Joh78}
C.~R. Johnson.
\newblock Numerical determination of the field of values of a general complex
  matrix.
\newblock {\em {SIAM} J. Numer. Anal.}, 15(3):595--602, 1978.

\bibitem[Kat82]{Kat82}
T.~Kato.
\newblock {\em A Short Introduction to Perturbation Theory for Linear
  Operators}.
\newblock Springer-Verlag, New York - Berlin, 1982.

\bibitem[Kip51]{Kip51}
R.~Kippenhahn.
\newblock {\"U}ber den {W}ertevorrat einer {M}atrix.
\newblock {\em Math. Nachr.}, 6(3-4):193--228, 1951.

\bibitem[KLV18]{KreLV18}
D.~Kressner, D.~Lu, and B.~Vandereycken.
\newblock Subspace acceleration for the {C}rawford number and related
  eigenvalue optimization problems.
\newblock {\em {SIAM} J. Matrix Anal. Appl.}, 39(2):961--982, 2018.

\bibitem[Kre62]{Kre62}
H.-O. Kreiss.
\newblock {\"U}ber die {S}tabilit{\"a}tsdefinition f{\"u}r
  {D}ifferenzengleichungen die partielle {D}ifferentialgleichungen
  approximieren.
\newblock {\em {BIT}}, 2(3):153--181, 1962.

\bibitem[K{\"u}h15]{Kuh15}
W.~K{\"u}hnel.
\newblock {\em Differential {G}eometry: {C}urves---{S}urfaces---{M}anifolds},
  volume~77 of {\em Student Mathematical Library}.
\newblock American Mathematical Society, Providence, RI, third edition, 2015.
\newblock Translated from the 2013 German edition by Bruce Hunt, with
  corrections and additions.

\bibitem[Lan64]{Lan64}
P.~Lancaster.
\newblock On eigenvalues of matrices dependent on a parameter.
\newblock {\em Numer. Math.}, 6:377--387, 1964.

\bibitem[LO20]{LewO20}
A.~S. Lewis and M.~L. Overton.
\newblock Partial smoothness of the numerical radius at matrices whose fields
  of values are disks.
\newblock {\em {SIAM} J. Matrix Anal. Appl.}, 41(3):1004--1032, 2020.

\bibitem[Mat93]{Mat93}
R.~Mathias.
\newblock Matrix completions, norms and {H}adamard products.
\newblock {\em Proc. Amer. Math. Soc.}, 117(4):905--918, 1993.

\bibitem[Mit]{rostapack}
T.~Mitchell.
\newblock {ROSTAPACK}: {RO}bust {STA}bility {PACK}age.
\newblock \url{http://timmitchell.com/software/ROSTAPACK}.

\bibitem[Mit20]{Mit20a}
T.~Mitchell.
\newblock Computing the {K}reiss constant of a matrix.
\newblock {\em {SIAM} J. Matrix Anal. Appl.}, 41(4):1944--1975, 2020.

\bibitem[Mit21]{Mit21}
T.~Mitchell.
\newblock Fast interpolation-based globality certificates for computing
  {K}reiss constants and the distance to uncontrollability.
\newblock {\em {SIAM} J. Matrix Anal. Appl.}, 42(2):578--607, 2021.

\bibitem[MO05]{MenO05}
E.~Mengi and M.~L. Overton.
\newblock Algorithms for the computation of the pseudospectral radius and the
  numerical radius of a matrix.
\newblock {\em {IMA} J. Numer. Anal.}, 25(4):648--669, 2005.

\bibitem[MO22]{MitO21}
T.~Mitchell and M.~L. Overton.
\newblock On properties of univariate max functions at local maximizers.
\newblock {\em Optim. Lett.}, 2022.
\newblock \url{https://doi.org/10.1007/s11590-022-01872-y}.

\bibitem[NW99]{NocW99}
J.~Nocedal and S.~J. Wright.
\newblock {\em {N}umerical {O}ptimization.}
\newblock Springer, New York, 1999.

\bibitem[Pea66]{Pea66}
C.~Pearcy.
\newblock An elementary proof of the power inequality for the numerical radius.
\newblock {\em Michigan Math. J.}, 13:289--291, 1966.

\bibitem[TE05]{TreE05}
L.~N. Trefethen and M.~Embree.
\newblock {\em Spectra and {P}seudospectra: {T}he {B}ehavior of {N}onnormal
  {M}atrices and {O}perators}.
\newblock Princeton University Press, Princeton, 2005.

\bibitem[TY99]{TamY99}
B.-S. Tam and S.~Yang.
\newblock On matrices whose numerical ranges have circular or weak circular
  symmetry.
\newblock {\em Linear Algebra Appl.}, 302--303:193--221, 1999.
\newblock Special issue dedicated to Hans Schneider (Madison, WI, 1998).

\bibitem[Uhl09]{Uhl09}
F.~Uhlig.
\newblock Geometric computation of the numerical radius of a matrix.
\newblock {\em Numer. Algorithms}, 52(3):335--353, 2009.

\bibitem[Wat96]{Wat96}
G.~A. Watson.
\newblock Computing the numerical radius.
\newblock {\em Linear Algebra Appl.}, 234:163--172, 1996.

\bibitem[Wri02]{eigtool}
T.~G. Wright.
\newblock Eig{T}ool.
\newblock \url{http://www.comlab.ox.ac.uk/pseudospectra/eigtool/}, 2002.

\end{thebibliography}
\end{document}